\newtheorem{theorem}{Theorem}[section]
\newtheorem{proposition}[theorem]{Proposition}
\newtheorem{lemma}[theorem]{Lemma}
\theoremstyle{definition}
\newtheorem{definition}[theorem]{Definition}
\newtheorem{remark}[theorem]{Remark}
\newtheorem{example}[theorem]{Example}
\newcommand{\conn}{\ensuremath{\#}} 
\newcommand{\cpinf}{\ensuremath{\mathbb{C}P^{\infty}} }
\newcommand{\hlgy}[1]{\ensuremath{H_{*}(#1)}}
\newcommand{\cohlgy}[1]{\ensuremath{H^{*}(#1)}}
\newcounter{bean}
\newenvironment{letterlist}{\begin{list}{\rm ({\alph{bean}})}
      {\usecounter{bean}\setlength{\rightmargin}{\leftmargin}}}
      {\end{list}}
\newcommand{\namedright}[3]{\ensuremath{#1\stackrel{#2}
 {\longrightarrow}#3}}
\newcommand{\nameddright}[5]{\ensuremath{#1\stackrel{#2}
 {\longrightarrow}#3\stackrel{#4}{\longrightarrow}#5}}
\newcommand{\namedddright}[7]{\ensuremath{#1\stackrel{#2}
 {\longrightarrow}#3\stackrel{#4}{\longrightarrow}#5
  \stackrel{#6}{\longrightarrow}#7}}
\newcommand{\larrow}{\relbar\!\!\relbar\!\!\rightarrow}
\newcommand{\llarrow}{\relbar\!\!\relbar\!\!\larrow}
\newcommand{\lllarrow}{\relbar\!\!\relbar\!\!\llarrow}
\newcommand{\lnameddright}[5]{\ensuremath{#1\stackrel{#2}
 {\larrow}#3\stackrel{#4}{\larrow}#5}}
\newcommand{\llnamedright}[3]{\ensuremath{#1\stackrel{#2}
 {\llarrow}#3}}
\newcommand{\llnameddright}[5]{\ensuremath{#1\stackrel{#2}
 {\llarrow}#3\stackrel{#4}{\llarrow}#5}}
\newcommand{\lllnamedright}[3]{\ensuremath{#1\stackrel{#2}
 {\lllarrow}#3}}
\newcommand{\qqed}{\hfill\Box}
\begin{document}


\title{Top cell attachment for a Poincar\'{e} Duality complex}

\author{Stephen Theriault}
\address{Mathematical Sciences, University of Southampton, Southampton 
   SO17 1BJ, United Kingdom}
\email{S.D.Theriault@soton.ac.uk}

\subjclass[2010]{Primary 55P35, 57N65}
\keywords{Poincar\'{e} Duality complex, loop space decomposition, cell attachment}


\begin{abstract} 
Let $M$ be a simply-connected closed Poincar\'{e} Duality complex of dimension $n$. Then~$M$ 
is obtained by attaching a cell of highest dimension to its $(n-1)$-skeleton $\overline{M}$.  
Conditions are given for when the skeletal inclusion 
\(\namedright{\overline{M}}{i}{M}\) 
has the property that $\Omega i$ has a right homotopy inverse. This is an integral version of 
the rational statement that such a right homotopy inverse always exists provided the rational 
cohomology of~$M$ is not generated by a single element. New methods are developed in order 
to do the integral case. These lead to $p$-local versions and recover the full rational statement. 
Families for which the integral statement holds include moment-angle manifolds and quasi-toric manifolds. 
\end{abstract}

\maketitle

\section{Introduction} 

An important problem in homotopy theory is to determine the effect on homotopy groups 
of a cell attachment. Let $X$ be a simply-connected $CW$-complex and suppose that 
there is a homotopy cofibration 
\(\nameddright{S^{n-1}}{f}{X}{i}{Y}\) 
where $f$ attaches a cell to $X$ to obtain $Y$. It is natural to ask how $\pi_{\ast}(X)$ and $\pi_{\ast}(Y)$ 
are related. 

In rational homotopy theory the attaching map $f$ is called \emph{rationally inert} if $\pi_{\ast}(i)$ is 
a surjection. This is equivalent to saying that, rationally, the loop map 
\(\namedright{\Omega X}{\Omega i}{\Omega Y}\) 
has a right homotopy inverse. The equivalence between the rational homology of a loop space 
and the universal enveloping algebra of the rational homotopy Lie algebra of the space also allows 
for a description of rational inertness in Lie theoretic terms. Rationally inert maps have been studied 
in depth~\cite{FT,HL1,HL2}, along with their variants~\cite{B,HeL}, and play an important role in the 
growth of rational homotopy groups~\cite{FHT} and a solution to a 
higher dimensional version of Whitehead's asphericity problem~\cite{A}. 

The notion of inertness was generalized in~\cite{T} to an integral context. The attaching map $f$ 
is called \emph{inert} if the loop map 
\(\namedright{\Omega X}{\Omega i}{\Omega Y}\) 
has a right homotopy inverse. This implies that the homotopy groups of $Y$ retract off the homotopy 
groups of $X$. This is interesting because it says that the cell attachment has the special property 
of killing off homotopy groups 
but not creating any new ones. It was shown that the attaching map for the top dimensional cell 
of several families of $(n-1)$-connected $2n$-dimensional and $(n-1)$-connected $(2n+1)$-dimensional 
Poincar\'{e} duality complexes is inert. This integral version of inertness has been developed further 
in recent work~\cite{Ch1,Ch2,H,HT}. 

Halperin and Lemaire~\cite{HL1} proved the dramatic statement that the attaching map for the top 
cell of any Poincar\'{e} Duality complex $M$ is rationally inert provided the rational cohomology of~$M$ is 
not generated by a single element. In this paper we give conditions for when an analogous integral 
statement is true. It should be emphasized that in the rational case the argument depends on 
using the Sullivan algebra as a model for rational homotopy theory. No such model exists integrally, 
so new methods had to be developed. These build on work in~\cite{BT2,T} that develops 
a framework for constructing loop space decompositions in the context of homotopy fibrations 
with a section after looping. The key is to enhance this setup with a homotopy co-action that  
allows for a splitting, thereby identifying certain fibres. This should have further applications. 

To state our results, let $M$ be a simply-connected, closed $n$-dimensional Poincar\'{e} 
Duality complex. Let $\overline{M}$ be the $(n-1)$-skeleton of $M$. Then there is a homotopy cofibration 
\[\nameddright{S^{n-1}}{f}{\overline{M}}{i}{M}\] 
where $f$ attaches the top cell to $M$ and $i$ is the skeletal inclusion. 

\begin{theorem} 
   \label{introinert} 
   Let $M$ be an $(m-1)$-connected, closed Poincar\'{e} Duality complex of dimension~$n$, 
   where $2\leq m<n$. If there is a map 
   \(\namedright{M}{}{S^{m}}\) 
   having a right homotopy inverse, then the attaching map for the top cell of $M$ is inert.  
\end{theorem}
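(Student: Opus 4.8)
The plan is to use loop space decompositions to reduce inertness of $f$ to a statement about homotopy fibres of maps to $S^{m}$, and then to settle that statement by feeding in the Poincar\'e duality of $M$. First I would set up two homotopy fibrations. Let $\namedright{M}{g}{S^{m}}$ be the given map with right homotopy inverse $s$; since $m\leq n-1$, cellular approximation homotopes $s$ into the $m$-skeleton and hence through $\overline{M}$, giving $\namedright{S^{m}}{\bar s}{\overline{M}}$ with $i\circ\bar s\simeq s$, so $\bar g:=g\circ i\colon\overline{M}\to S^{m}$ also has a right homotopy inverse. Let $F$ and $\bar F$ be the homotopy fibres of $g$ and $\bar g$. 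Since the sections exist before looping, the framework of~\cite{BT2,T} for homotopy fibrations with a section after looping yields equivalences $\Omega M\simeq\Omega F\times\Omega S^{m}$ and $\Omega\overline{M}\simeq\Omega\bar F\times\Omega S^{m}$, both realised by multiplying a looped fibre inclusion against $\Omega s$, respectively $\Omega\bar s$. As $\bar F\simeq\overline{M}\times_{M}F$ there is an induced map $\namedright{\bar F}{j}{F}$ with $\iota_{F}\circ j\simeq i\circ\iota_{\bar F}$, and a short calculation with the two decompositions (using $i\circ\iota_{\bar F}\simeq\iota_F\circ j$ and $i\circ\bar s\simeq s$) shows that under them $\Omega i$ becomes $\Omega j\times\mathrm{id}_{\Omega S^{m}}$. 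So it is enough to show $\Omega j$ has a right homotopy inverse.

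Next I would identify the fibres that have to be compared and bring in Poincar\'e duality. The square with horizontal maps $j,i$ and vertical maps $\iota_{\bar F},\iota_{F}$ is a homotopy pullback, so $\mathrm{hofib}(j)\simeq\mathrm{hofib}(i)$; and since $M=\overline{M}\cup_{f}e^{n}$ the inclusion $i$ is an $(n-1)$-equivalence, so this common fibre is $(n-2)$-connected. (This is the point at which the homotopy co-action coming from the top-cell attachment is meant to be used, to recognise and split this fibre in the manner indicated in the introduction.) On the other side, after replacing $g$ by a fibration — whose total space is again a Poincar\'e duality complex — and using that $S^{m}$ is a Poincar\'e duality complex, Gottlieb's theorem on Poincar\'e duality and fibrations identifies $F$ as a finitely dominated Poincar\'e duality complex of formal dimension $n-m$, while the section forces $F$ to be $(m-1)$-connected. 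Observe also that $n-m\geq m$: otherwise $F$ would be $(m-1)$-connected of dimension below $m$, hence contractible, forcing $M\simeq S^{m}$, which is impossible for a Poincar\'e duality complex of dimension $n>m$. In particular $\dim F=n-m\leq n-2$.

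Finally I would run obstruction theory for a section of the fibration $\namedright{\bar F}{j}{F}$: its fibre is $(n-2)$-connected and its base has dimension at most $n-2$, so every obstruction to a section lies in a group $H^{k+1}(F;\pi_{k}(\mathrm{hofib}(j)))$ with $k\leq\dim F-1=n-m-1\leq n-2$, where the coefficient group already vanishes. Hence $j$ admits a right homotopy inverse $\namedright{F}{\sigma}{\bar F}$, so $\Omega j\circ\Omega\sigma\simeq\mathrm{id}_{\Omega F}$, and therefore $\Omega j$ — hence $\Omega i$ — has a right homotopy inverse; that is, $f$ is inert. I expect the hard part to be the middle step: it is essential that $F$ be genuinely a finite-dimensional Poincar\'e duality complex of dimension $n-m$, not an arbitrary homotopy fibre, and one must check that replacing $g$ by a fibration does not spoil finite domination of the fibre. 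This is the only place the full Poincar\'e duality hypothesis on $M$ is used, and it is what substitutes for the Sullivan-model argument in the rational case; a secondary point needing care is the bookkeeping in the first step that turns $\Omega i$ into $\Omega j\times\mathrm{id}$, since that identification is what legitimises the whole reduction.
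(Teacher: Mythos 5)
Your first step is fine and is consistent with how the paper itself concludes: reducing inertness of $f$ to producing a right homotopy inverse (after looping) for the induced map of fibres $\bar F\to F$ over $S^{m}$ is exactly the reduction used in the proof of Theorem \ref{inert}, where $\bar F$ and $F$ appear as $E$ and $E'$. The genuine gap is your middle step. Gottlieb's theorem does not \emph{identify} the homotopy fibre $F$ of $\namedright{M}{g}{S^{m}}$ as a finitely dominated Poincar\'e Duality complex of formal dimension $n-m$: finite domination of the fibre is a \emph{hypothesis} of the Quinn--Gottlieb result, not a conclusion, and the homotopy fibre of a map from a closed Poincar\'e Duality complex to a sphere is in general neither finitely dominated nor finite-dimensional. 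Concretely, take $M=(S^{3}\times S^{3})\conn(S^{3}\times S^{3})$, so $n=6$, $m=3$, and let $g$ be the collapse $M\to S^{3}\times S^{3}$ followed by the projection; the bottom-cell inclusion is a right homotopy inverse, so all hypotheses of the theorem hold. If $F$ were a $2$-connected Poincar\'e Duality complex of formal dimension $3$ it would be homotopy equivalent to $S^{3}$, forcing $\Omega M\simeq \Omega S^{3}\times\Omega S^{3}$ and hence polynomial growth of $H_{*}(\Omega M;\mathbb{Q})$; but $M$ is rationally hyperbolic, so $H_{*}(\Omega M;\mathbb{Q})$ grows exponentially. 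Thus $F$ is not such a complex, and in particular your bound $\dim F=n-m\leq n-2$ fails.

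Because of this, the obstruction-theory step has nothing to run over: the obstructions to a section of $j$ lie in $H^{k+1}(F;\pi_{k}(\mathrm{hofib}(j)))$ with $k\geq n-1$ (your identification $\mathrm{hofib}(j)\simeq\mathrm{hofib}(i)$ and its $(n-2)$-connectivity are correct), and these groups need not vanish since $F$ can have cells in arbitrarily high dimensions. The missing section of $\namedright{\bar F}{j}{F}$ is precisely what the paper's Proposition \ref{Esplitting} supplies, and it is obtained by a quite different mechanism: the homotopy co-action coming from the attachment of the $(n-m)$-cells of $\overline{M}$ is transported to the fibre (Proposition \ref{co-action}), the splitting criterion of Theorem \ref{splittingprinciple} is applied, and Poincar\'e duality enters only through the cup product $x\cup y=z$ in $H^{*}(M)$, which is used to show that the auxiliary finite complex $D$ sits in a homotopy fibration $S^{n-m}\to D\to S^{m}$. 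In other words, the duality of $M$ is exploited at the level of its cohomology ring on a specially constructed three-cell complex, not by asserting duality for the (typically infinite-dimensional) homotopy fibre of $M\to S^{m}$; some argument of that kind is needed to close the gap in your proposal.
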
 

Note that the hypothesis that $2\leq m$ implies that $M$ is at least simply-connected and $m<n$ 
implies that $M$ is not a sphere. The condition that there is a map 
\(\namedright{M}{}{S^{m}}\) 
having a right homotopy inverse is demanding but many examples of interesting Poincar\'{e} 
Duality complexes with this property are given. One family of examples is moment-angle 
manifolds, which arise in toric topology. If $K$ is a simplicial complex on $\ell$ vertices then the 
moment-angle complex associated to $K$ is constructed by gluing together $\ell$-fold products of 
factors $D^{2}$ and $S^{1}$ in a manner governed by the faces of $K$. As will be discussed in 
Examples~\ref{toricexample} and~\ref{toricexample2}, if $K$ is the triangulation of a sphere 
then the moment-angle complex is a manifold, and a missing face of least dimension, say dimension $k$, 
implies that this moment-angle manifold $M$ is $(2k-2)$-connected and there is a map 
\(\namedright{M}{}{S^{2k-1}}\) 
that has a right homotopy inverse. So Theorem~\ref{introinert} implies that the attaching map 
for the top cell of $M$ is inert. 

A $p$-local version of Theorem~\ref{introinert} also holds, which introduces more opportunities 
for producing a map 
\(\namedright{M}{}{S^{m}}\)  
having a right homotopy inverse. More is true rationally. If $M$ is rationally $(m-1)$-connected and $m$ 
is odd then such a map always exists rationally, since $m$ being odd implies that $S^{m}$ is 
rationally homotopy equivalent to the Eilenberg-Mac Lane space $K(\mathbb{Q},m)$, in which 
case producing a map 
\(\namedright{M}{}{S^{m}}\) 
is the same as identifying a generator in $H^{m}(M;\mathbb{Q})$.  

This discussion indicates that finding examples for Theorem~\ref{introinert} in which $m$ is even may 
be more delicate than when $m$ is odd, partly because the rational version of the statement runs into 
the problem that $S^{m}$ is not rationally homotopy equivalent to an Eilenberg-Mac Lane space. 
However, when $m=2$ there is an alternative statement that is based on using a map 
\(\namedright{M}{}{\cpinf}\) 
instead of a map 
\(\namedright{M}{}{S^{2}}\). 
Write $BS^{1}$ for the classifying space $\cpinf$ of $S^{1}$. 

\begin{theorem} 
   \label{introcircleinert} 
   Let $M$ be a simply-connected closed Poincar\'{e} Duality complex of dimension $n$. 
   Suppose that there is a homotopy fibration 
   \[\nameddright{N}{}{M}{h}{BS^{1}}\] 
   where $\Omega h$ has a right homotopy inverse. If the attaching map for the top cell of $N$ 
   is inert then the attaching map for the top cell of $M$ is inert. 
\end{theorem}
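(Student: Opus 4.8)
The plan is to leverage the homotopy fibration $\nameddright{N}{}{M}{h}{BS^{1}}$ to transfer the top-cell attachment data from $M$ down to $N$, apply inertness of the attaching map for the top cell of $N$, and then push the resulting loop space splitting back up to $M$. The first step is to understand the skeletal structure. Since $BS^{1}$ is built from even cells and $\Omega h$ has a right homotopy inverse, the fibre inclusion $\namedright{N}{}{M}$ should realize $N$ as (homotopy equivalent to) a space closely related to the $(n-1)$-skeleton; more precisely, because $S^{1}$ acts on the homotopy fibre, one expects a homotopy cofibration comparing $\overline{M}$, $M$, and the total space of an $S^{1}$- (or $\cpinf$-) bundle-like construction over $N$. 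Concretely, I would pull back the path-loop fibration over $BS^{1}$ to get that $M$ is the total space of a fibration with fibre $N$ over $BS^{1}=\cpinf$, and then use that the top cell of $M$ — dimension $n$ — sits over the top cell of $N$ — dimension $n-2$ — twisted by the $2$-cell of $\cpinf$. This is where the bulk of the geometric input lies: identifying that $\overline{N}$, the $(n-3)$-skeleton of $N$, together with the attaching map of its top cell, controls $\overline{M}$.

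Next, by hypothesis the attaching map $\namedright{S^{n-3}}{g}{\overline{N}}$ for the top cell of $N$ is inert, meaning $\namedright{\Omega\overline{N}}{\Omega j}{\Omega N}$ has a right homotopy inverse, where $\namedright{\overline{N}}{j}{N}$ is the skeletal inclusion. The plan is then to loop the fibration $\nameddright{N}{}{M}{h}{BS^{1}}$: since $\Omega h$ has a right homotopy inverse, $\Omega M \simeq \Omega N \times \Omega BS^{1} = \Omega N \times S^{1}$ as spaces (not loop spaces), by the standard splitting for fibrations with a section after looping used in \cite{BT2,T}. Similarly I would want a splitting $\Omega M \simeq \Omega\overline{M} \times (\text{something})$. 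The key is to arrange these two splittings compatibly with the map $\Omega i \colon \Omega\overline{M}\to\Omega M$ so that a right homotopy inverse for $\Omega i$ can be assembled from the right homotopy inverse for $\Omega j$ together with the section $S^{1}\to\Omega M$ coming from $\Omega h$.

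To make that assembly work I would use the homotopy co-action framework alluded to in the introduction: the cofibration $\nameddright{S^{n-1}}{f}{\overline{M}}{i}{M}$ gives a co-action $\namedright{M}{}{M\vee S^{n}}$, and the fibration structure over $BS^{1}$ gives, after looping, an action of $\Omega BS^{1}=S^{1}$ that identifies the homotopy fibre of $\Omega i$ with the homotopy fibre of $\Omega j$ smashed or producted with $S^{1}$. Then a right homotopy inverse for $\Omega i$ exists precisely when the corresponding map on fibres is null after looping — and this reduces, via the $S^{1}$-equivariance, exactly to the statement that $\Omega j$ has a right homotopy inverse, which is the inertness hypothesis on $N$. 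Finally one checks the Poincaré Duality hypotheses are preserved: $N$ is a Poincaré Duality complex of dimension $n-2$ (it is the fibre of a map to $BS^{1}$ from a closed $n$-manifold-like complex, hence homotopy equivalent to a closed $(n-2)$-dimensional Poincaré Duality complex by a Gysin sequence argument), so the hypothesis "the attaching map for the top cell of $N$ is inert" is meaningful.

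The main obstacle I expect is the first step: rigorously identifying the skeleton $\overline{M}$ and the top-cell attaching map $f$ in terms of the corresponding data for $N$ via the fibration over $BS^{1}$. The fibration does not split as a product — it is precisely the non-triviality of the $\cpinf$-bundle structure that makes $M$ a Poincaré Duality complex rather than $N\times\cpinf$ (which is not finite-dimensional) — so one must carefully truncate, using that $\cpinf$ has a filtration by $\cp^{k}$'s and that $M$, being $n$-dimensional, is pulled back over $\cp^{k}$ for $k$ large enough relative to $n$. Controlling the attaching map of the top cell through this truncation, and verifying it is compatible with the co-action used in the splitting argument, is the technical heart of the proof; the looping and fibre-identification steps afterward are then formal consequences of the framework established in \cite{BT2,T} and the arguments underlying Theorem~\ref{introinert}.
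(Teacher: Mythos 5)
There is a genuine gap, and it starts with the direction of the fibration. Extending $\nameddright{N}{}{M}{h}{BS^{1}}$ one step to the left gives a principal fibration $\nameddright{S^{1}}{}{N}{}{M}$, so $N$ is (up to homotopy) a circle bundle over $M$ and is a closed Poincar\'{e} Duality complex of dimension $n+1$, not $n-2$ (this is Lemma~\ref{NPD}; compare $M=\cp^{k}$ with $h$ the inclusion into $\cpinf$, whose homotopy fibre is $S^{2k+1}$). Your proposal treats $N$ as smaller than $M$, with top cell attached along a map $S^{n-3}\to\overline{N}$, and tries to read off $\overline{M}$ and $f$ from data on $N$ by truncating $\cpinf$; that picture is upside down, so the transfer of skeletal data you describe cannot be carried out as stated. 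The correct intermediate object is not a truncation over $\cp^{k}$ but the restriction of the circle bundle to the skeleton: the space $Q$ obtained by pulling back $\nameddright{S^{1}}{}{N}{}{M}$ along $\namedright{\overline{M}}{i}{M}$.

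The second gap is the assembly step. You assert that a right homotopy inverse for $\Omega i$ exists ``precisely when the corresponding map on fibres is null after looping'' and that $S^{1}$-equivariance reduces this to inertness of the top cell of $N$; neither claim is justified, and this is exactly where the real work lies. In the paper, Theorem~\ref{BT} applied to the data $(S^{n-1},\overline{M},M)$ fibred over $BS^{1}$ gives a homotopy cofibration $\nameddright{S^{n-1}\rtimes S^{1}}{}{Q}{}{N}$, refined to $\nameddright{S^{n-1}}{j}{Q}{\overline{b}}{\overline{N}}$ where $\overline{N}$ is the $n$-skeleton of the $(n+1)$-dimensional complex $N$ (Lemma~\ref{QNbarcofib}); the heart of the proof is Proposition~\ref{QNbarsplit}, which splits this cofibration as $Q\simeq S^{n-1}\vee\overline{N}$ using the homotopy co-action induced on $Q$ (Proposition~\ref{co-action}) together with a Poincar\'{e} Duality cup-product argument showing that the relevant Whitehead-product coefficient equals $1$. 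Only then does the hypothesis on $N$ enter: a right homotopy inverse for $\Omega\overline{N}\to\Omega N$ combines with the splitting of $Q$ and the compatible equivalences $S^{1}\times\Omega Q\simeq\Omega\overline{M}$ and $S^{1}\times\Omega N\simeq\Omega M$ (Lemma~\ref{MMbarcompatsplit}) to produce a right homotopy inverse for $\Omega i$. Your outline contains the looping and splitting formalities, but it is missing both the correct geometric setup (the space $Q$) and the co-action plus cup-product argument that produces the splitting, which is the actual content of the proof.
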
 

Theorem~\ref{introcircleinert} may be iterated and combined with Theorem~\ref{introinert}. 
For example, let $M$ be a quasi-toric manifold. These are fundamental objects in toric 
topology associated to simple polytopes and whose construction and 
properties are thoroughly described in~\cite{BP}. One property is that, for an appropriate~$\ell$, 
there is a homotopy fibration 
\(\nameddright{N}{}{M}{}{BT^{\ell}}\) 
where $T^{\ell}$ is the torus formed by taking the product of $\ell$ copies of $S^{1}$,  
$BT^{\ell}$ is its classifying space, and $N$ is the moment-angle manifold associated to 
the simple polytope $P$. Since we have seen that the attaching map for the top cell of~$N$ is inert 
provided that $K=(\partial P)^{\ast}$ is not a simplex, by iterating Theorem~\ref{introcircleinert} 
and then applying Theorem~\ref{introinert} (for details, see Theorem~\ref{torusinert}), we obtain that the  
attaching map for the top cell of $M$ is also inert. 

Finally, the methods behind Theorem~\ref{introcircleinert} may be generalized in the rational context 
when the connectivity of $M$ is even, and combined with the earlier statement about when the 
connectivity of $M$ is odd, to reproduce Halperin and Lemaire's result that the attaching map 
for the top cell of any simply-connected, closed Poincar\'{e} Duality complex is rationally inert 
provided that its rational cohomology is not generated by a single element. It is notable that this 
is done without using Sullivan algebras. 

This paper is organized as follows. In Section~\ref{sec:BT} a theorem from~\cite{BT2} 
is recalled that sets up the context for the later sections. In particular, given the homotopy cofibration 
\(\nameddright{S^{n-1}}{}{\overline{M}}{}{M}\)  
that attaches the top cell of $M$ and a map 
\(\namedright{\overline{M}}{h}{S^{m}}\) 
such that $\Omega h$ has a right homotopy inverse and $h$ extends to a map
\(\namedright{M}{h'}{S^{m}}\), 
there is a homotopy cofibration 
\(\nameddright{S^{n-1}\rtimes\Omega S^{m}}{}{E}{}{E'}\) 
where $E$ and $E'$ are the homotopy fibres of $h$ and $h'$ respectively. Section~\ref{sec:background} 
enhances such a setup with a homotopy co-action, and in Section~\ref{sec:splitting} this is used 
to produce a splitting of the homotopy cofibration involving $E$ and $E'$. This is then applied 
in Section~\ref{sec:inert} to prove Theorem~\ref{introinert} and discuss the moment-angle manifold 
examples. Section~\ref{sec:local} gives a $p$-local version of Theorem~\ref{introinert} 
when $m$ is odd, and Section~\ref{sec:localeven} gives a $p$-local version assuming an extra 
cohomological condition when $m$ is even. We return to the integral setting in Section~\ref{sec:torus} 
where Theorem~\ref{introcircleinert} is proved and the quasi-toric manifold case is discussed. 
Section~\ref{sec:rational} generalizes Theorem~\ref{introcircleinert} in the rational setting in 
order to recover the result of Halperin and Lemaire. Finally, in Section~\ref{sec:apps} two 
applications are given, one of which identifies the homotopy fibre of the map 
\(\namedright{\overline{M}}{}{M}\) 
when the attaching map for the top cell is inert, and the other produces more examples 
of inert attaching maps via the connected sum operation.

\section{A decomposition theorem} 
\label{sec:BT} 

This section briefly states a result from~\cite{BT2}. Suppose that there is a homotopy cofibration 
\(\nameddright{A}{f}{X}{}{X'}\) 
and a homotopy fibration 
\(\nameddright{E}{}{X}{h}{Z}\). 
Suppose that $h\circ f$ is null homotopic so that $h$ extends across 
\(\namedright{X}{}{X'}\) 
to a map 
\(h'\colon\namedright{X'}{}{Z}\). 
Let $E'$ be the homotopy fibre of $h'$. This data is arranged in a diagram 
\begin{equation} 
  \label{data} 
  \diagram 
       & E\rto\dto & E'\dto \\ 
       A\rto^-{f} & X\rto\dto^{h} & X'\dto^{h'} \\ 
       & Z\rdouble & Z 
  \enddiagram 
\end{equation} 
where the two columns form a homotopy fibration diagram. 

For pointed spaces $A$ and $B$, the \emph{right half-smash} is defined as the quotient space 
\[A\rtimes B=(A\times B)/\sim\] 
where $(\ast,b)\sim(\ast,\ast)$. It is well known that if $A$ is a co-$H$-space then there is a 
homotopy equivalence $A\rtimes B\simeq A\vee (A\wedge B)$.  

\begin{theorem} 
   \label{BT} 
   Given a diagram of data~(\ref{data}). If $\Omega h$ has a right homotopy inverse 
   \(s\colon\namedright{\Omega Z}{}{\Omega X}\) 
   then there is a homotopy cofibration 
   \[\nameddright{A\rtimes\Omega Z}{\theta}{E}{}{E'}\] 
   for some map $\theta$ whose restriction to $A$ is a lift of $f$.~$\qqed$ 
 \end{theorem}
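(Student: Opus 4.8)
The plan is to build the homotopy cofibration by analyzing the fibration diagram~(\ref{data}) after applying the hypothesis that $\Omega h$ has a section. First I would recall the standard fact that in a homotopy fibration $\nameddright{E}{}{X}{h}{Z}$, the fibre $E$ receives a homotopy action of $\Omega Z$, and that the fibre inclusion $\namedright{E}{}{X}$ together with the connecting map $\namedright{\Omega Z}{}{E}$ lets one compare $E$ with $\Omega Z$-bundle data. The key structural input is the section $s\colon\namedright{\Omega Z}{}{\Omega X}$: given this, I would invoke the principal-fibration–style statement (this is exactly the content developed in~\cite{BT2}) that $X$ is, up to homotopy, the ``Borel construction'' $E\times_{\Omega Z}(\ast)$ in a suitable sense, or more concretely that there is a homotopy equivalence relating $X$ to a pushout built from $E$ and $\Omega Z$. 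The section guarantees that the fibration $\namedright{E}{}{X}$ behaves like a trivial-ish bundle at the level needed to identify the relevant homotopy pushout.

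Next I would bring in the cofibration $\nameddright{A}{f}{X}{}{X'}$. The hypothesis $h\circ f\simeq\ast$ means $f$ lifts to a map $\namedright{A}{\widetilde{f}}{E}$, and the extension $h'$ of $h$ across $X'$ produces the right-hand fibration $\namedright{E'}{}{X'}{h'}{Z}$, with $E'$ fitting into the fibration diagram~(\ref{data}). The goal is to understand the ``difference'' between $E$ and $E'$. Pulling the cofibration $\namedright{X}{}{X'}$ back along $h'$ (equivalently, analyzing how the cell attached to $X$ to form $X'$ interacts with the fibration), the cell $A$ gets replaced in the fibre by its total space over $\Omega Z$ — that is, by $A\rtimes\Omega Z$. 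Concretely, because $\Omega h$ has a section, the homotopy fibre of $\namedright{E}{}{E'}$ (or rather the relative structure) is controlled by $A$ crossed with the fibre $\Omega Z$; the half-smash appears precisely because the attaching of the cell is based, so the basepoint slice $\{\ast\}\times\Omega Z$ collapses. This is where I would be most careful: one must check that the relevant square
\[\diagram A\rtimes\Omega Z\rto^-{\theta}\dto & E\dto \\ CA\rtimes\Omega Z\rto & E' \enddiagram\]
(or an equivalent formulation using $A\rightarrow\ast$) is a homotopy pushout, which in turn reduces to the section hypothesis trivializing the monodromy over the cone direction. The map $\theta$ is then, by construction, obtained from the lift $\widetilde{f}$ together with the $\Omega Z$-action on $E$, so its restriction to $A=A\rtimes\{\ast\}$ is exactly $\widetilde{f}$, a lift of $f$.

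The main obstacle will be establishing that this comparison square is genuinely a homotopy pushout rather than merely exhibiting a map $\theta$ with a cofibre that maps to $E'$: one has to show the cofibre of $\theta$ is $E'$, not just that there is a natural candidate. This is handled by the ``enhancement with a homotopy co-action'' philosophy the introduction advertises — realizing $E\rightarrow E'$ as pulled back from $X\rightarrow X'$ and using that the section $s$ makes the fibration over the cone on $A$ trivial, so that the fibre over the cone is $CA\rtimes\Omega Z\simeq\ast$, forcing the pushout identification. The remaining verifications — that $\theta|_A$ lifts $f$, and that the equivalences are compatible with the maps to $Z$ — are formal diagram chases once the pushout is in place. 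I would therefore structure the proof as: (1) produce the lift $\widetilde{f}$; (2) recall/construct the homotopy pushout description of $X$ in terms of $E$ and $\Omega Z$ using $s$; (3) push the cofibration $X\rightarrow X'$ into the fibres to get the square above; (4) identify the top-left corner as $A\rtimes\Omega Z$ via the co-$H$ splitting $A\rtimes\Omega Z\simeq A\vee(A\wedge\Omega Z)$ only if needed, and read off $\theta$ and its restriction.
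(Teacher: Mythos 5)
The paper itself does not prove Theorem~\ref{BT}; it is quoted from \cite{BT2} (see also \cite{T}), so the comparison is with the argument given there, whose overall shape your sketch does parallel: write $X'\simeq X\cup_{f}CA$, pull the fibration data over $Z$ back across this pushout, and apply the cube lemma to identify $E'$.

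The genuine gap is at the decisive step, and the mechanism you give for it is wrong. Taking homotopy fibres over $Z$ (equivalently, pulling back $E'\rightarrow X'$, whose fibre is $\Omega Z$), the pullback over $A$ is $A\times\Omega Z$ and the pullback over the cone is $CA\times\Omega Z\simeq\Omega Z$, \emph{not} $CA\rtimes\Omega Z\simeq\ast$; the half-smash does not appear ``because the attaching of the cell is based''. What the cube lemma gives, with no hypothesis on $\Omega h$ at all, is only a homotopy pushout presenting $E'$ as the double mapping cylinder of $\Omega Z\longleftarrow A\times\Omega Z\longrightarrow E$, where the right-hand map is (up to homotopy) the holonomy action composed with $\tilde{f}\times 1$ and its restriction to $\ast\times\Omega Z$ is the fibration connecting map $\partial\colon\Omega Z\rightarrow E$. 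The passage from this pushout to the asserted cofibration $A\rtimes\Omega Z\rightarrow E\rightarrow E'$ is exactly where the right homotopy inverse $s$ must enter, and its precise role is to make $\partial$ null homotopic: $\partial\simeq\partial\circ\Omega h\circ s\simeq\ast$ since $\partial\circ\Omega h\simeq\ast$. That null homotopy is what allows the gluing map $A\times\Omega Z\rightarrow E$ to be homotoped so that it collapses $\ast\times\Omega Z$, hence factors through $A\rtimes\Omega Z$, after which the pushout is identified with the mapping cone of $\theta$ and $\theta|_{A}\simeq\tilde{f}$. Your appeals to ``$X$ as a Borel construction'' (true for any fibration, no section needed) and to the section ``trivializing the monodromy over the cone direction'' do not produce this null homotopy or the resulting identification; as written, your argument establishes only the weaker pushout statement, and since the theorem is false without the section hypothesis, pinning down this use of $s$ is not optional but is the heart of the proof.
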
 
 
 The point of Theorem~\ref{BT} is that the map $\theta$ can sometimes be used to 
 determine the homotopy type of $E'$. The right homotopy inverse for $\Omega h$ 
 implies there is a right homotopy inverse for $\Omega h'$, resulting in a homotopy 
 equivalence $\Omega X'\simeq\Omega Z\times\Omega E'$. Knowing the homotopy 
 type of $E'$ then informs on the homotopy type of $\Omega X'$.

\section{Homotopy fibrations and homotopy co-actions}  
\label{sec:background} 

The purpose of this section is to prove Proposition~\ref{co-action}. This says that if 
\(\nameddright{E}{}{X}{h}{Z}\) 
is a homotopy fibration where $\Omega h$ has a right homotopy inverse and there is a homotopy co-action  
\(\namedright{X}{}{X\vee Y}\) 
(defined later), then there is an induced homotopy co-action 
\(\namedright{E}{}{E\vee(Y\rtimes\Omega Z)}\). 
We begin with a version of Mather's Cube Lemma~\cite{Ma}. 

\begin{lemma} 
   \label{cube}
   Suppose that there is a homotopy commutative diagram of spaces and maps
   \[\spreaddiagramcolumns{-1pc}\spreaddiagramrows{-1pc} 
      \diagram
      E\rrto\drto\ddto & & F\dline\drto & \\
      & G\rrto\ddto & \dto & H\ddto \\
      A\rline\drto & \rto & B\drto & \\
      & C\rrto & & D
   \enddiagram\]
   where all four vertical maps are obtained by taking fibres over a common base space, 
   implying that the four sides are homotopy pullbacks. If the bottom face is a homotopy pushout 
   then the top face is a homotopy pushout.~$\qqed$
\end{lemma}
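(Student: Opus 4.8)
The plan is to deduce the statement from Mather's cube theorem \cite{Ma}: in a homotopy commutative cube whose four side faces are homotopy pullbacks and whose bottom face is a homotopy pushout, the top face is a homotopy pushout. The hypothesis already records that the four side faces are homotopy pullbacks, but it is worth isolating why. Write $W$ for the common base space and $p_{A},p_{B},p_{C},p_{D}$ for the four compatible maps to $W$ whose homotopy fibres are $E,F,G,H$. Consider the side face with corners $E,F,A,B$ lying over the edge $A\to B$ of the bottom face. Since $F\simeq B\times^{h}_{W}\ast$ and the composite $A\to B\to W$ equals $p_{A}$, the pasting law for homotopy pullbacks gives $A\times^{h}_{B}F\simeq A\times^{h}_{W}\ast\simeq E$, so this face is a homotopy pullback; the other three side faces are handled identically. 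Hence Mather's theorem applies and the top face is a homotopy pushout.

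If one instead wanted a self-contained argument in this special case, I would proceed by rigidifying the cube. First arrange that $p_{A},p_{B},p_{C}$ are fibrations, that $p_{B}$ and $p_{C}$ restrict to $p_{A}$ on $A$, and that $A\to B$ and $A\to C$ are cofibrations; then set $D=B\cup_{A}C$ with $p_{D}\colon D\to W$ the induced map. This model of $D$ agrees with the original one precisely because the bottom face is a homotopy pushout. Now a point of $B\cup_{A}C$ lies over the basepoint of $W$ exactly when it lies in $p_{B}^{-1}(\ast)$ or in $p_{C}^{-1}(\ast)$, so $H=p_{D}^{-1}(\ast)$ is literally the pushout $F\cup_{E}G$ of the top span, formed along the inclusion $E\to F$. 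Since $E\to F$ inherits the homotopy extension property from the cofibration $A\to B$, this strict pushout is a homotopy pushout, as wanted.

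The only genuine content, in either route, is the interchange of a homotopy pushout with the operation of passing to homotopy fibres, that is, the stability of homotopy pushouts under base change. In the first route this is absorbed into Mather's proof; in the second it is the bookkeeping needed to perform all of the strictifications simultaneously without disturbing the homotopy commutativity of the cube, and in particular to see that $E\to F$ becomes a cofibration. Since this lemma is used below only as a tool and is exactly Mather's cube theorem, in practice I would simply cite \cite{Ma} and record the pullback verification given above.
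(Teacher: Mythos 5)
Your first route is essentially what the paper does: the paper gives no argument of its own beyond citing Mather \cite{Ma} and observing that, under the stated hypothesis, one may invoke \cite[Lemma~3.1]{PT}. One point of care, though: the paper explicitly remarks that Mather's general cube theorem is \emph{not} stated for cubes that merely commute up to homotopy face by face --- it requires a stronger, coherent notion of homotopy commutative cube. So the step ``hence Mather's theorem applies'' after checking that the four side faces are homotopy pullbacks is exactly the point being finessed; what makes everything work here is that the whole cube is \emph{constructed} by taking fibres over a common base, which supplies the coherence (and this is precisely why the paper routes the argument through \cite[Lemma~3.1]{PT} rather than quoting Mather verbatim). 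Your pasting-law verification of the side faces is fine and is implicit in the paper's hypothesis, so with the coherence remark added, route one is the paper's proof.

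Your second, self-contained route has a genuine gap at the step ``$E\to F$ inherits the homotopy extension property from the cofibration $A\to B$.'' Restricting a cofibration over $W$ to the fibres over a point is a base change along $\ast\to W$, and pullbacks of cofibrations along arbitrary maps are not cofibrations in general; the standard pullback theorems (Str\o m, Kieboom) give cofibrations when one pulls back along a \emph{fibration}, which $\ast\to W$ is not. One can repair this using fibrewise homotopy theory (a closed cofibration between fibrations over $W$ is a fibrewise cofibration, and fibrewise cofibrations restrict to cofibrations on fibres), but as written the assertion is unjustified and is exactly where the content of the lemma lives. Since the lemma is only a tool here, citing \cite{Ma} together with \cite[Lemma~3.1]{PT}, as the paper does, is the cleaner course.
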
 

Technically, Mather proved a more general result without the hypothesis that the vertical maps 
are obtained by taking fibres over a common base space, for which he needed a different definition of a 
``homotopy commutative cube". In our case the stronger hypothesis lets one use~\cite[Lemma~3.1]{PT}, 
for example, to establish Lemma~\ref{cube}.  

In general, for path-connected spaces $X$ and $Y$, let 
\(i_{1}\colon\namedright{X}{}{X\vee Y}\) 
and 
\(i_{2}\colon\namedright{Y}{}{X\vee Y}\) 
be the inclusions of the left and right wedge summands respectively and let 
\(p_{1}\colon\namedright{X\vee Y}{}{X}\) 
and 
\(p_{2}\colon\namedright{X\vee Y}{}{Y}\) 
be the pinch maps to the left and right wedge summands respectively. Let  
\(j_{1}\colon\namedright{X}{}{X\times Y}\) 
and 
\(j_{2}\colon\namedright{Y}{}{X\times Y}\) 
be the inclusions of the left and right factors respectively and let 
\(\pi_{1}\colon\namedright{X\times Y}{}{X}\) 
and 
\(\pi_{2}\colon\namedright{X\times Y}{}{Y}\) 
be the projections onto the left and right factors respectively. Let  
\(q\colon\namedright{X\times Y}{}{X\rtimes Y}\) 
be the quotient map. 

\begin{lemma}  
   \label{fibwedge} 
   Suppose that there is a homotopy fibration 
   \(\nameddright{E}{}{X}{h}{Z}\) 
   where $X$ and $Z$ are path-connected and $\Omega h$ has a right homotopy inverse. 
   Let $Y$ be another path-connected space and let $\overline{h}$ be the composite  
   \(\overline{h}\colon\nameddright{X\vee Y}{p_{1}}{X}{h}{Z}\). 
   Then there is a homotopy fibration 
   \[\nameddright{E\vee(Y\rtimes\Omega Z)}{}{X\vee Y}{\overline{h}}{Z}\] 
   and a homotopy commutative cube  
   \[\spreaddiagramrows{-1pc} 
       \diagram
          \Omega Z\rrto^-{\ast}\drto^{j_{2}}\ddto & & E\dline\drto^{i_{1}} & \\
          & Y\times\Omega Z\rrto^(0.3){i_{2}\circ q}\ddto & \dto & E\vee(Y\rtimes\Omega Z)\ddto \\
          \ast\rline\drto & \rto & X\drto & \\
          & Y\rrto & & X\vee  Y
     \enddiagram\] 
   where the bottom and top faces are homotopy pushouts, the four sides are  
   homotopy pullbacks, and the map labelled $\ast$ is null homotopic.   
\end{lemma}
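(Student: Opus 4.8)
The plan is to realise $E\vee(Y\rtimes\Omega Z)$ as the homotopy fibre of $\overline{h}$ by taking homotopy fibres of the defining homotopy pushout square for $X\vee Y$ and then invoking Lemma~\ref{cube}. The starting observation is that all four corners of that square map compatibly to $Z$: since $p_{1}\circ i_{1}$ is the identity on $X$ we have $\overline{h}\circ i_{1}=h$, since $p_{1}\circ i_{2}$ is null homotopic so is $\overline{h}\circ i_{2}$, and $\namedright{\ast}{}{Z}$ is constant. Thus the homotopy pushout square with corners $\ast$, $X$, $Y$, $X\vee Y$ and structure maps the two basepoint inclusions and the two wedge inclusions is a square over $Z$.

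Next I would take homotopy fibres over $Z$ of the four corners. The fibre of $\namedright{\ast}{}{Z}$ is $\Omega Z$, the fibre of $h$ is $E$, the fibre of the constant map $\namedright{Y}{}{Z}$ is $Y\times\Omega Z$, and I write $\widetilde{E}$ for the fibre of $\overline{h}$. Taking fibres of the four structure maps assembles these into a cube whose bottom face is the pushout square above, whose vertical maps are the four fibre inclusions, and whose four side faces are the fibre comparison squares over $Z$ (for a map $\namedright{A}{}{B}$ over $Z$ the square relating $\mathrm{hofib}(\namedright{A}{}{Z})$, $\mathrm{hofib}(\namedright{B}{}{Z})$, $A$, $B$ is a homotopy pullback), hence homotopy pullbacks. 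The top edge induced by $\namedright{\ast}{}{X}$ is the connecting map $\delta\colon\namedright{\Omega Z}{}{E}$ of the fibration $\nameddright{E}{}{X}{h}{Z}$, the top edge induced by $\namedright{\ast}{}{Y}$ is the inclusion $j_{2}\colon\namedright{\Omega Z}{}{Y\times\Omega Z}$ of the right factor, and the top edges induced by the wedge inclusions $i_{1}$ and $i_{2}$ are $\namedright{E}{}{\widetilde{E}}$ and $\namedright{Y\times\Omega Z}{}{\widetilde{E}}$. Since the bottom face is a homotopy pushout, Lemma~\ref{cube} applies and the top face is a homotopy pushout; that is, $\widetilde{E}$ is the homotopy pushout of $\delta$ and $j_{2}$.

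Now I would bring in the hypothesis. Extending the fibration $\nameddright{E}{}{X}{h}{Z}$ one step to the left gives the fibration sequence $\namedright{\Omega X}{\Omega h}{\Omega Z}$ followed by $\namedright{\Omega Z}{\delta}{E}$, and a right homotopy inverse for $\Omega h$ makes $(\Omega h)_{\ast}$ surjective on $[W,-]$ for every $W$, whence $\delta_{\ast}$ is trivial on $[W,-]$ for every $W$ and $\delta$ is null homotopic. Replacing $\delta$ by the constant map does not change the homotopy pushout up to homotopy, and the homotopy pushout of a constant map $\namedright{\Omega Z}{}{E}$ along $j_{2}$ is $E\vee C$, where $C$ is the homotopy cofibre of $j_{2}$. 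Since $j_{2}$ is the inclusion of $\{\ast\}\times\Omega Z$ in $Y\times\Omega Z$, directly from the definition of the right half-smash $C=Y\rtimes\Omega Z$ with the cofibre map $\namedright{Y\times\Omega Z}{}{C}$ equal to $q$. Hence $\widetilde{E}\simeq E\vee(Y\rtimes\Omega Z)$, which gives the asserted homotopy fibration; transporting the cube of fibres across this equivalence turns $\namedright{E}{}{\widetilde{E}}$ into $i_{1}$ and $\namedright{Y\times\Omega Z}{}{\widetilde{E}}$ into $i_{2}\circ q$, keeps the bottom and top faces homotopy pushouts and the four sides homotopy pullbacks, and leaves the top edge $\namedright{\Omega Z}{}{E}$ equal to the null homotopic map $\delta$ — exactly the cube in the statement.

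The work here is largely bookkeeping rather than conceptual, and the main obstacle is to keep the maps under control. One must check that the top edge induced by $\namedright{\ast}{}{X}$ genuinely is the connecting map $\delta$, so that its nullity is governed by the hypothesis on $\Omega h$; verify that the four comparison squares over $Z$ are homotopy pullbacks and that the assembled cube is honestly homotopy commutative, so that Lemma~\ref{cube} applies; and track the wedge-summand inclusions and the half-smash quotient map through the equivalence $\widetilde{E}\simeq E\vee(Y\rtimes\Omega Z)$ so that the maps labelled $i_{1}$, $i_{2}\circ q$ and $\ast$ in the cube come out as stated. The most delicate point is recognising the top pushout as a wedge, which requires identifying the two maps out of $\Omega Z$ precisely.
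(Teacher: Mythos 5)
Your proposal is correct and follows essentially the same route as the paper: take homotopy fibres over $Z$ of the pushout square defining $X\vee Y$, apply Lemma~\ref{cube}, identify the two maps out of $\Omega Z$ as the (null homotopic, by the right inverse for $\Omega h$) connecting map and $j_{2}$, and then read off the top pushout as $E\vee(Y\rtimes\Omega Z)$ with the maps $i_{1}$ and $i_{2}\circ q$. The only cosmetic difference is that the paper records the final identification via an explicit iterated homotopy pushout diagram, whereas you phrase it as replacing the connecting map by a constant map and taking the cofibre of $j_{2}$; these are the same argument.
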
 

\begin{proof} 
Consider the homotopy pushout 
\[\diagram 
    \ast\rto\dto & X\dto \\ 
    Y\rto & X\vee Y. 
  \enddiagram\] 
Compose the maps in the pushout with $\overline{h}$ and take homotopy fibres. Noting 
that the restriction of~$\overline{h}$ to $X$ is $h$ while its restriction to $Y$ is null homotopic, 
we obtain homotopy fibrations 
\begin{align*} 
   \Omega Z\longrightarrow\ast & \longrightarrow Z \\ 
   E\longrightarrow X & \stackrel{h}{\longrightarrow} Z \\ 
   Y\times\Omega Z\longrightarrow Y & \longrightarrow Z \\ 
   F\longrightarrow X\vee Y & \stackrel{\overline{h}}{\longrightarrow} Z
\end{align*} 
where $F$ is defined as the homotopy fibre of $\overline{h}$. By Lemma~\ref{cube}, 
there is a homotopy commutative cube  
\begin{equation} 
   \label{XvYcube} 
   \spreaddiagramcolumns{-1pc}\spreaddiagramrows{-1pc} 
   \diagram
      \Omega Z\rrto^-{a}\drto^{b}\ddto & & E\dline\drto^{c} & \\
      & Y\times\Omega Z\rrto^(0.4){d}\ddto & \dto & F\ddto \\
      \ast\rline\drto & \rto & X\drto & \\
      & Y\rrto & & X\vee  Y
  \enddiagram 
\end{equation} 
where the bottom and top faces are homotopy pushouts and the four sides are  
homotopy pullbacks. The maps $a$, $b$, $c$ and $d$ are induced maps of fibres. 
  
First, the homotopy classes of $a$ and $b$ are identified. The rear face of the cube fits in a 
homotopy  fibration diagram  
\[\diagram 
      \Omega Z\rdouble\ddouble & \Omega Z\rto\dto^{a} & \ast\rto\dto & Z\ddouble \\ 
      \Omega Z\rto^-{\partial} & E\rto & X\rto^-{h} & Z 
  \enddiagram\]  
where  $\partial$  is the fibration connecting map. The homotopy commutativity of the  
left square implies that $a\simeq\partial$. Further, by hypothesis, $\Omega h$ has a right 
homotopy inverse, implying that $\partial$ is null homotopic. Hence $a$ is null homotopic. 
Next, the left face of the cube fits in a homotopy fibration diagram 
\[\diagram 
      \Omega Z\rdouble\ddouble & \Omega Z\rto\dto^{b} & \ast\rto\dto  & Z\ddouble \\ 
      \Omega Z\rto^{j_{2}} & Y\times\Omega Z\rto  & Y\rto & Z 
  \enddiagram\]  
where $j_{2}$ is the inclusion of the second factor. The homotopy commutativity of the 
left square implies that $b\simeq j_{2}$.  

The null homotopy for $a$ implies that the top face in~(\ref{XvYcube}) can be expanded 
to an iterated homotopy pushout  
\[\diagram
      \Omega Z\rto\dto^{i_{2}}  & \ast\rto\dto  & E\dto^{c} \\ 
      Y\times\Omega Z\rto^-{q} & Y\rtimes\Omega Z\rto  & F.    
  \enddiagram\] 
Note the bottom row is homotopic to $d$. 
The right homotopy pushout implies that \mbox{$F\simeq E\vee(Y\rtimes\Omega Z)$} and,  
under this homotopy equivalence, the maps $c$ and $d$ become $i_{1}$ and $i_{2}\circ q$ 
respectively. Substituting the identifications of $a$, $b$, $F$, $c$ and $d$ into~(\ref{XvYcube}) 
completes the proof. 
\end{proof} 

In the special case when $Z=X$ and $h$ is the identity map, Lemma~\ref{fibwedge} recovers 
the following well known result. 

\begin{lemma} 
   \label{p1fib} 
   If $X$ and $Y$ are path-connected spaces then there is a homotopy fibration 
   \[\nameddright{Y\rtimes\Omega X}{}{X\vee Y}{p_{1}}{X}.\] 
\end{lemma}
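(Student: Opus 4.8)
The plan is to obtain Lemma~\ref{p1fib} as the degenerate case $Z = X$, $h = \mathrm{id}_X$ of Lemma~\ref{fibwedge}, since all the work has already been done there. First I would check that the hypotheses of Lemma~\ref{fibwedge} are met in this special case. We need a homotopy fibration \(\nameddright{E}{}{X}{h}{Z}\) with $\Omega h$ admitting a right homotopy inverse; taking $Z = X$, $h = \mathrm{id}_X$, the homotopy fibre $E$ is contractible, $\Omega h = \mathrm{id}_{\Omega X}$, and a right homotopy inverse is again the identity. The path-connectivity hypotheses on $X$ and $Z$ are exactly the path-connectivity of $X$ assumed in Lemma~\ref{p1fib}, and $Y$ is path-connected by assumption.

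Next I would feed this data into Lemma~\ref{fibwedge}. With these choices the composite $\overline{h}$ is the pinch map \(\namedright{X \vee Y}{p_{1}}{X}\) itself (followed by $\mathrm{id}_X$), so Lemma~\ref{fibwedge} produces a homotopy fibration
\[\nameddright{E \vee (Y \rtimes \Omega Z)}{}{X \vee Y}{\overline{h}}{Z}.\]
Substituting $Z = X$ and using that $E \simeq \ast$, the total fibre $E \vee (Y \rtimes \Omega Z)$ becomes $\ast \vee (Y \rtimes \Omega X) \simeq Y \rtimes \Omega X$, and $\overline{h}$ becomes $p_{1}$. This yields precisely the homotopy fibration
\[\nameddright{Y \rtimes \Omega X}{}{X \vee Y}{p_{1}}{X}\]
claimed in Lemma~\ref{p1fib}.

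There is essentially no obstacle here: the content is entirely contained in Lemma~\ref{fibwedge}, and the only thing to verify is that the degenerate choice of fibration is legitimate and that the wedge summand $E$ collapses. The one small point worth a sentence is to confirm that the wedge decomposition $E \vee (Y \rtimes \Omega Z)$ in Lemma~\ref{fibwedge} is natural enough that collapsing the contractible summand $E$ really does identify the fibre with $Y \rtimes \Omega X$ and the fibre inclusion with the evident map — but this is immediate from the construction in the proof of Lemma~\ref{fibwedge}, where the fibre is built as the iterated pushout whose relevant corner is $Y \rtimes \Omega Z$ once the map $a$ (here null homotopic automatically) is collapsed. So the proof is a two-line specialization.
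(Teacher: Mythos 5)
Your proposal is correct and matches the paper exactly: the paper obtains Lemma~\ref{p1fib} precisely as the special case $Z=X$, $h=\mathrm{id}_X$ of Lemma~\ref{fibwedge}, with the contractible fibre $E$ collapsing in the wedge, which is just what you do.
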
 
\vspace{-1cm}~$\qqed$\bigskip 

Next, Lemma~\ref{fibwedge} is pushed further given an additional hypothesis on $X$.  

\begin{definition} 
Let $X$ and $Y$ be path-connected spaces and suppose there is a map 
\(\delta\colon\namedright{X}{}{Y}\).  
A  map  
\[\psi\colon\namedright{X}{}{X\vee Y}\] 
is called a \emph{homotopy co-action} (with respect to $\delta$) if $p_{1}\circ\psi$ is homotopic to 
the identity map on $X$ and $p_{2}\circ\psi\simeq\delta$. 
\end{definition}  

For example, a homotopy cofibration sequence 
\(\namedddright{A}{}{B}{}{C}{\delta}{\Sigma A}\) 
has a canonical homotopy co-action 
\(\namedright{C}{}{C\vee\Sigma A}\)  
with respect to $\delta$. 

Suppose as in Lemma~\ref{fibwedge} that there is a homotopy fibration 
\(\nameddright{E}{}{X}{h}{Z}\) 
and the map $\overline{h}$ is defined by the composite 
\(\overline{h}\colon\nameddright{X\vee Y}{p_{1}}{X}{h}{Z}\). 
Suppose that there are maps 
\(\namedright{X}{\delta}{Y}\) 
and 
\(\namedright{X}{\psi}{X\vee Y}\) 
where $\psi$ is a homotopy co-action with respect to $\delta$. As $\psi$ is a homotopy co-action, 
$p_{1}\circ\psi$ is homotopic to the identity map on $X$, implying that 
$\overline{h}\circ\psi=h\circ p_{1}\circ\psi\simeq h$. Observe that the naturality of the pinch 
map $p_{1}$ implies that $\overline{h}$ is also equal to the composite 
\(\nameddright{X\vee Y}{h\vee 1}{Z\vee Y}{p_{1}}{Y}\). 
Thus, writing $h$ as $p_{1}\circ(h\vee 1)\circ\psi$ we obtain an iterated homotopy pullback diagram 
\begin{equation} 
  \label{iteratedpbE} 
  \diagram 
      E\rto^-{\psi_{E}}\dto &  E\vee(Y\rtimes\Omega Z)\rto^-{t}\dto & Y\rtimes\Omega  Z\dto \\ 
      X\rto^-{\psi}\dto^{h}  & X\vee Y\rto^-{h\vee 1}\dto^{\overline{h}} &  Z\vee Y\dto^{p_{1}} \\ 
      Z\rdouble & Z\rdouble & Z 
  \enddiagram 
\end{equation} 
that defines the maps $\psi_{E}$ and $t$. Here, the homotopy fibre of $\overline{h}$ 
was identified by Lemma~\ref{fibwedge} and the homotopy fibre of $p_{1}$ was identified 
by Lemma~\ref{p1fib}. We now proceed to better identify the maps $\psi_{E}$ and $t$. 

\begin{lemma}  
   \label{tid} 
   The map $t$ in~(\ref{iteratedpbE}) is homotopic to the pinch map $p_{2}$. 
\end{lemma}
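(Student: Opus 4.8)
The plan is to trace the map $t$ through the construction of the homotopy fibre $E \vee (Y \rtimes \Omega Z)$ of $\overline{h}$ given in Lemma~\ref{fibwedge}, and to exploit the fact that in diagram~(\ref{iteratedpbE}) the map $h \vee 1$ is the identity on the $Y$-summand. First I would observe that the right-hand column of~(\ref{iteratedpbE}) is the homotopy fibration $\nameddright{Y \rtimes \Omega Z}{}{Z \vee Y}{p_{1}}{Z}$ of Lemma~\ref{p1fib}, so that the inclusion of the fibre $Y \rtimes \Omega Z$ factors through $i_{2} \colon \namedright{Y}{}{Z \vee Y}$ on the $Y$-part together with the inclusion $\Omega Z \to Y \rtimes \Omega Z$ coming from the fibration connecting map; more precisely, the cube of Lemma~\ref{fibwedge} (with $X$ there replaced by $Z$ and $h$ the identity) identifies the fibre inclusion as $i_{2} \circ q \colon \namedright{Y \times \Omega Z}{}{Z \vee Y}$ after collapsing.

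Next I would set up the analogous cube for the middle and right columns of~(\ref{iteratedpbE}): the map $h \vee 1 \colon \namedright{X \vee Y}{}{Z \vee Y}$ sits over $p_{1}$ on both sides, and on fibres it induces precisely $t$. Since $h \vee 1$ restricts to $h$ on $X$ and to the identity on $Y$, the induced map of vertical homotopy fibres is, on the $Y \rtimes \Omega Z$ summand of the source (which is the fibre of $p_{1} \colon \namedright{X \vee Y}{}{X}$ by Lemma~\ref{p1fib}), built from the identity on $Y$ and from $\Omega h$ on the $\Omega X \to \Omega Z$ part — but the half-smash $Y \rtimes \Omega Z$ on the target depends only on $Y$ and $\Omega Z$, and the relevant structure map to it is $i_{2} \circ q$, which factors through the $Y$-coordinate. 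Comparing the two cubes via the map $h \vee 1$ and using the identification of fibre inclusions as $i_{1}$ and $i_{2} \circ q$ from Lemma~\ref{fibwedge}, one reads off that the composite $\nameddright{E \vee (Y \rtimes \Omega Z)}{t}{Y \rtimes \Omega Z}{}{Z \vee Y}$ agrees with $\nameddright{E \vee (Y \rtimes \Omega Z)}{p_{2}}{Y \rtimes \Omega Z}{i_{2} \circ q}{Z \vee Y}$, after which injectivity of the relevant fibre inclusion (it has a retraction up to homotopy, since $p_{1} \circ \psi \simeq \mathbbm{1}$ splits the fibration) forces $t \simeq p_{2}$.

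**The main obstacle** I anticipate is bookkeeping: there are two instances of Lemma~\ref{p1fib} and Lemma~\ref{fibwedge} in play (one for $X$, one for $Z$), and I must be careful that the half-smash $Y \rtimes \Omega Z$ appearing as the fibre of $\overline{h}$ is genuinely the same space, with the same structure maps, as the one appearing as the fibre of $p_{1} \colon \namedright{Z \vee Y}{}{Z}$ — this is what makes the top square of~(\ref{iteratedpbE}) commute on the nose and lets the identification of $t$ go through. A cleaner route, which I would pursue if the cube-chase becomes unwieldy, is to use the explicit homotopy pushout description of $F \simeq E \vee (Y \rtimes \Omega Z)$ at the end of the proof of Lemma~\ref{fibwedge}: the map $\psi$ restricted to the $Y$-summand is, up to homotopy, $i_{2}$, and $h \vee 1$ sends this summand to itself by the identity, so the induced map on the pushout defining the fibre collapses the $E$-summand and is the identity on $Y \rtimes \Omega Z$ — which is exactly the statement that $t \simeq p_{2}$. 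Either way, the content is that the only part of the co-action $\psi$ that survives to the fibre after projecting onto $Y \rtimes \Omega Z$ is the $\delta$-component, and that this is visible from the strict naturality of $p_{1}$ already noted before the statement.
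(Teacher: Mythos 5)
Your overall strategy (exploit that $h\vee 1$ is $h$ on $X$ and the identity on $Y$, and compare with the cube of Lemma~\ref{fibwedge}) is in the right spirit, and indeed the paper's proof runs a second cube: it applies Lemma~\ref{cube} to the homotopy pushout square with corners $X$, $Z$, $X\vee Y$, $Z\vee Y$ mapped to $Z$ via $p_{1}$, juxtaposes it with the cube of Lemma~\ref{fibwedge}, and reads off from the top face of the new cube, which is a homotopy pushout with left vertical $\namedright{E}{i_{1}}{E\vee(Y\rtimes\Omega Z)}$ and rear map $E\to\ast$, that the induced fibre map (which by the front face is $t$) is $p_{2}$. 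The gap in your write-up is the step by which you actually conclude $t\simeq p_{2}$. What your comparison yields is, at best, that the two composites into the total space agree, i.e.\ $j\circ t\simeq j\circ p_{2}$ where $j\colon\namedright{Y\rtimes\Omega Z}{}{Z\vee Y}$ is the fibre inclusion of $p_{1}$; you then cancel $j$ on the grounds that it has a left homotopy inverse ``since $p_{1}\circ\psi\simeq 1$ splits the fibration''. This fails on two counts. First, $p_{1}\circ\psi\simeq 1$ makes $\psi$ a section of the pinch map $\namedright{X\vee Y}{p_{1}}{X}$; a section of a projection gives no retraction of a fibre inclusion, and in any case it concerns the fibration over $X$, not the one over $Z$ that is relevant here. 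Second, $j$ admits no left homotopy inverse in general: $\widetilde{H}_{*}(Y\rtimes\Omega Z)\cong\widetilde{H}_{*}(Y)\oplus\widetilde{H}_{*}(Y\wedge\Omega Z)$ is usually far larger than $\widetilde{H}_{*}(Z\vee Y)$ (take $Y=S^{3}$, $Z=S^{2}$), so $Y\rtimes\Omega Z$ cannot be a retract of $Z\vee Y$. More fundamentally, a map into the fibre is not determined by its composite into the total space, so no injectivity statement of this kind can close the argument; some genuinely fibrewise input (such as the second application of the cube lemma) is required.

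Your proposed ``cleaner route'' asserts exactly the point that needs proof, namely that $t$ respects the wedge/pushout decomposition of $E\vee(Y\rtimes\Omega Z)$, collapsing the $E$-summand and restricting to the identity on $Y\rtimes\Omega Z$; no justification is given (and the phrase ``$\psi$ restricted to the $Y$-summand'' does not parse, since the source of $\psi$ is $X$, not a wedge). A repair along your lines would be to prove separately that $t\circ i_{1}$ is null homotopic and that $t\circ i_{2}\simeq 1_{Y\rtimes\Omega Z}$, since a map out of a wedge is determined up to homotopy by its restrictions to the summands; but the second of these is again a statement about induced maps of fibres and cannot be extracted from composites into $Z\vee Y$ alone (nor from precomposition with the quotient $q$, which also need not be injective on homotopy classes). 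This is precisely the role of the paper's juxtaposed-cube argument.
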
 

\begin{proof} 
Consider the iterated homotopy pushout  
\[\diagram 
    \ast\rto\dto & X\rto^-{h}\dto^{i_{1}} & Z\dto^{i_{1}} \\ 
    Y\rto^-{i_{2}} & X\vee Y\rto^-{h\vee 1} & Z\vee Y.   
  \enddiagram\] 
Compose all maps in the pushout with the pinch map  
\(\namedright{Z\vee Y}{p_{1}}{Z}\)  
and take homotopy fibres. Note that $p_{1}\circ(h\vee 1)=\overline{h}$, which recovers the context 
of Lemma~\ref{fibwedge}. By Lemma~\ref{cube} applied to each homotopy pushout, and using  
Lemma~\ref{fibwedge} to identify spaces and maps in the cube for the left pushout, we obtain a 
diagram of juxtaposed homotopy commutative cubes 
\begin{equation} 
   \spreaddiagramcolumns{-1pc}\spreaddiagramrows{-1pc} 
   \diagram
      \Omega Z\rrto^-{\ast}\drto^{j_{2}}\ddto & & E\rrto\dline\drto^{i_{1}} & & \ast\dline\drto & \\
      & Y\times\Omega Z\rrto^(0.3){i_{2}\circ q}\ddto & \dto & E\vee(Y\rtimes\Omega Z)\rrto\ddto 
            & \dto & Y\rtimes\Omega Z\ddto \\
      \ast\rline\drto & \rto & X\rline^-{h}\drto & \rto & Z\drto & \\
      & Y\rrto & & X\vee  Y\rrto^-{h\vee 1} & & Z\vee Y
  \enddiagram 
\end{equation} 
where the bottom and top faces in the left and right cubes are homotopy pushouts and the four sides 
in each cube are homotopy pullbacks. The homotopy pushout in the top face of the right cube 
immediately implies that the map 
\(\namedright{E\vee(Y\rtimes\Omega Z)}{}{Y\rtimes\Omega Z}\)  
is homotopic to $p_{2}$. Note that the front face of the right cube is exactly the homotopy 
pullback in the upper right square of~(\ref{iteratedpbE}). Thus 
the map~$t$ in~(\ref{iteratedpbE}) is homotopic to $p_{2}$, as asserted. 
\end{proof} 

The map $\psi_{E}$ covering the homotopy co-action $\psi$ has another property. 
By definition, $\overline{h}=h\circ p_{1}$, so we obtain an iterated homotopy pullback diagram 
\begin{equation} 
  \label{iteratedpbE2} 
  \diagram 
      E\rto^-{\psi_{E}}\dto &  E\vee(Y\rtimes\Omega Z)\rto^-{r}\dto & E\dto \\ 
      X\rto^-{\psi}\dto^{h}  & X\vee Y\rto^-{p_{1}}\dto^{\overline{h}} &  X\dto^{h} \\ 
      Z\rdouble & Z\rdouble & Z 
  \enddiagram 
\end{equation} 
that defines the map $r$. 

\begin{lemma} 
   \label{rid} 
   The map $r$ in~(\ref{iteratedpbE2}) is homotopic to the pinch map $p_{1}$.  
\end{lemma}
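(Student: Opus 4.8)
The plan is to run a cube argument like the one that proves Lemma~\ref{tid}, this time arranging the cube so that collapsing the wedge summand $Y$ out of $X\vee Y$ is what is visible on total spaces, and then to pin down the resulting edge map by testing it separately against the two wedge inclusions of $E\vee(Y\rtimes\Omega Z)$.

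First I would dispatch the easy half. Since $\overline{h}=h\circ p_{1}$, both $i_{1}\colon\namedright{X}{}{X\vee Y}$ and $p_{1}\colon\namedright{X\vee Y}{}{X}$ are maps over $Z$, and $p_{1}\circ i_{1}$ is the identity on $X$. Taking homotopy fibres over $Z$ and using Lemma~\ref{fibwedge} to identify the fibre map induced by $i_{1}$ with the wedge inclusion $\namedright{E}{i_{1}}{E\vee(Y\rtimes\Omega Z)}$ and the fibre map induced by $p_{1}$ with $r$, one gets $r\circ i_{1}\simeq\mathrm{id}_{E}$. So it suffices to prove that $r\circ i_{2}$ is null homotopic, where $i_{2}$ is the other wedge inclusion; the wedge axiom then yields $r\simeq p_{1}$, since $p_{1}$ is the unique class with $p_{1}\circ i_{1}\simeq\mathrm{id}_{E}$ and $p_{1}\circ i_{2}\simeq\ast$.

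For the null homotopy I would feed the homotopy pushout
\[\diagram
    Y\rto\dto_{i_{2}} & \ast\dto \\
    X\vee Y\rto^-{p_{1}} & X
  \enddiagram\]
(a homotopy pushout because $i_{2}$ is a cofibration and $(X\vee Y)/Y\simeq X$ with collapse map $p_{1}$) into Lemma~\ref{cube}, taking homotopy fibres of all four corners over $Z$ via the constant map on $Y$, via $h$ on $X$, and via $\overline{h}$ on $X\vee Y$. By Lemma~\ref{fibwedge} the fibre of $\overline{h}$ is $E\vee(Y\rtimes\Omega Z)$ and the fibre map induced by $i_{2}$ is $i_{2}\circ q$, while the fibre of the constant map on $Y$ is $Y\times\Omega Z$ mapping by the projection $\pi_{2}$ onto the fibre $\Omega Z$ of $\namedright{\ast}{}{Z}$, and the fibre map induced by $p_{1}$ is the map $r$ of~(\ref{iteratedpbE2}). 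Thus Lemma~\ref{cube} produces a homotopy pushout with top edge $i_{2}\circ q$, left leg $\pi_{2}$ and right edge $r$. Let $S$ be the homotopy pushout of $\namedright{Y\times\Omega Z}{\pi_{2}}{\Omega Z}$ and $\namedright{Y\times\Omega Z}{q}{Y\rtimes\Omega Z}$; factoring the top edge $i_{2}\circ q$ as $q$ followed by $i_{2}$ presents the previous pushout as the outer rectangle of two horizontally juxtaposed squares whose left one is the defining square of $S$, so by the pasting law for homotopy pushouts the right square
\[\diagram
    Y\rtimes\Omega Z\rto^-{i_{2}}\dto & E\vee(Y\rtimes\Omega Z)\dto^{r} \\
    S\rto & E
  \enddiagram\]
is a homotopy pushout as well, and in particular $r\circ i_{2}$ factors through $S$. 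Finally $S$ is contractible: the homotopy pushout square presenting $Y\rtimes\Omega Z$ as the cofibre of $j_{2}\colon\namedright{\Omega Z}{}{Y\times\Omega Z}$ pastes onto the defining square of $S$ along $q$, and the outer rectangle so obtained has top edge $\pi_{2}\circ j_{2}=\mathrm{id}_{\Omega Z}$, so the pasting law identifies $S$ with the homotopy pushout of $\mathrm{id}\colon\namedright{\Omega Z}{}{\Omega Z}$ and $\namedright{\Omega Z}{}{\ast}$, which is a point. Hence $r\circ i_{2}\simeq\ast$, and $r\simeq p_{1}$.

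The main obstacle, and the reason one cannot simply read $r$ off a single pushout as in Lemma~\ref{tid}, is that a homotopy pushout square determines its remaining edge only up to a self-homotopy-equivalence of the target: Lemma~\ref{cube} by itself gives $r\simeq\phi\circ p_{1}$ for some self-equivalence $\phi$ of $E$, and it is the separately established identity $r\circ i_{1}\simeq\mathrm{id}_{E}$ that forces $\phi\simeq\mathrm{id}_{E}$. The trickiest interior step is the pasting verification that $S$ is contractible, where one must keep track of which maps used as pushout legs are cofibrations so that the ordinary quotients genuinely compute homotopy pushouts.
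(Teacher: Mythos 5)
Your proof is correct and follows essentially the same route as the paper: the paper likewise applies Lemma~\ref{cube} to the pushout collapsing $Y$ out of $X\vee Y$, identifies the induced fibre maps via Lemma~\ref{fibwedge}, and shows the auxiliary pushout (your $S$, the paper's $Q$) is contractible using the cofibration coming from $\pi_{2}\circ j_{2}$ being the identity on $\Omega Z$. The only difference is the finish: the paper reads $r\simeq p_{1}$ directly off the resulting homotopy cofibration \(\nameddright{Y\rtimes\Omega Z}{i_{2}}{E\vee(Y\rtimes\Omega Z)}{r}{E}\), whereas you additionally check $r\circ i_{1}\simeq \mathrm{id}_{E}$ by functoriality of fibres over $Z$ and then invoke the wedge axiom, which is a slightly more careful way of removing the self-equivalence ambiguity you flag, not a different method.
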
 

\begin{proof}  
Consider the iterated homotopy pushout  
\[\diagram 
    \ast\rto\dto & Y\rto\dto^{i_{2}} & \ast\dto^{i_{1}} \\ 
    X\rto^-{i_{1}} & X\vee Y\rto^-{p_{1}} & X.   
  \enddiagram\] 
Compose the maps in the pushout with 
\(\namedright{X}{h}{Z}\) 
and take homotopy fibres. Note that $p_{1}\circ h=\overline{h}$, which recovers the context of 
Lemma~\ref{fibwedge}. By Lemma~\ref{cube} applied to each homotopy pushout, and using 
Lemma~\ref{fibwedge} to identify spaces and maps in the cube for the left pushout, we obtain 
a diagram of juxtaposed homotopy commutative cubes 
\begin{equation} 
   \label{XvYcube3} 
   \spreaddiagramcolumns{-1pc}\spreaddiagramrows{-1pc} 
   \diagram
      \Omega Z\rrto^-{j_{2}}\drto^{\ast}\ddto & & Y\times\Omega Z\rrto^-{a}\dline\drto^{i_{2}\circ q} 
            & & \Omega Z\dline\drto & \\
      & E\rrto^(0.3){i_{1}}\ddto & \dto & E\vee(Y\rtimes\Omega Z)\rrto^(0.5){r}\ddto & \dto & E\ddto \\
      \ast\rline\drto & \rto & Y\rline\drto & \rto & \ast\drto & \\
      & X\rrto & & X\vee  Y\rrto^-{p_{1}} & & X
  \enddiagram 
\end{equation} 
where the bottom and top faces in the left and right cubes are homotopy pushouts, the four sides 
in each cube are homotopy pullbacks, and the map $a$ is an induced map of homotopy fibres. 
Notice the front face in the right cube is the homotopy pullback in the upper right square 
of~(\ref{iteratedpbE2}), thereby identifying $r$.  

We begin by identifying the map $a$. Since the composite 
\(\namedddright{Y}{i_{2}}{X\vee Y}{p_{1}}{X}{h}{Z}\) 
is null homotopic, the rear face of the right cube fits in a homotopy fibration diagram  
\[\diagram 
     \Omega Z\rto^{j_{2}}\ddouble  & Y\times\Omega Z\rto^-{\pi_{1}}\dto^{a} & Y\rto^{\ast}\dto & Z\ddouble \\ 
     \Omega Z\rdouble & \Omega Z\rto & \ast\rto & Z.  
  \enddiagram\]  
The middle square is a homotopy pullback, implying that $a\simeq\pi_{2}$. With $a$ identified, 
the fact that $i_{2}\circ q$ is a composite implies that the right top face in~(\ref{XvYcube3}) expands 
to an iterated homotopy pushout  
\begin{equation} 
  \label{psip2finish} 
  \diagram 
      Y\times\Omega Z\rto^-{\pi_{2}}\dto^{q} & \Omega Z\dto \\ 
      Y\rtimes\Omega Z\rto\dto^{i_{2}} & Q\dto \\ 
      E\vee(Y\rtimes\Omega Z)\rto^-{r} & E. 
  \enddiagram 
\end{equation}  
that defines the space $Q$. Consider the homotopy pushout in the top square of~(\ref{psip2finish}). 
Since $q$ is obtained by taking the homotopy cofibre of the inclusion 
\(\namedright{\Omega Z}{j_{2}}{Y\times\Omega Z}\), 
the pushout implies that there is a homotopy cofibration 
\(\llnameddright{\Omega Z}{\pi_{2}\circ j_{2}}{\Omega Z}{}{Q}\). 
As $\pi_{2}\circ q$ is homotopic to the identity map on $\Omega Z$, the space $Q$ is contractible. 
The bottom homotopy pushout in~(\ref{psip2finish}) then implies that there is a 
homotopy cofibration 
\(\nameddright{Y\rtimes\Omega Z}{i_{2}}{E\vee(Y\rtimes\Omega Z)}{r}{E}\), 
implying that $r$ is homotopic to $p_{1}$. 
\end{proof}  

Define the map $\delta_{E}$ by the homotopy pullback diagram 
\begin{equation} 
  \label{deltaEpb} 
  \diagram 
      E\rto^-{\delta_{E}}\dto  & Y\rtimes\Omega Z\dto \\ 
      X\rto^-{(h\vee 1)\circ\psi}\dto^{h} & Z\vee Y\dto^{p_{1}} \\ 
      Z\rdouble & Z. 
  \enddiagram 
\end{equation} 

\begin{proposition} 
   \label{co-action} 
   Given the hypotheses of Lemma~\ref{fibwedge}, suppose as well that there is a map 
   \(\namedright{X}{\delta}{Y}\) 
   and a homotopy co-action  
   \(\namedright{X}{\psi}{X\vee Y}\)  
   with respect to $\delta$. Then there is a homotopy fibration diagram  
   \[\diagram 
         E\rto^-{\psi_{E}}\dto & E\vee(Y\rtimes\Omega Z)\dto \\ 
         X\rto^-{\psi}\dto^{h} & X\vee Y\dto^{\overline{h}} \\ 
         Z\rdouble & Z  
     \enddiagram\] 
   where  $\psi_{E}$ is a homotopy co-action with respect to $\delta_{E}$.  
\end{proposition}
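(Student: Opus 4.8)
The plan is to verify the three defining properties of a homotopy co-action for $\psi_E$ with respect to $\delta_E$: namely that the top square in the asserted diagram is a homotopy pullback, that $p_1 \circ \psi_E$ is homotopic to the identity on $E$, and that $p_2 \circ \psi_E \simeq \delta_E$. The first point is essentially already in hand: diagram~(\ref{iteratedpbE}) (equivalently~(\ref{iteratedpbE2}) or~(\ref{deltaEpb})) was constructed as an iterated homotopy pullback in which the homotopy fibre of $\overline{h}$ was identified via Lemma~\ref{fibwedge} as $E \vee (Y \rtimes \Omega Z)$ and the map $\psi_E$ was defined by the universal property. So the left-hand square of~(\ref{iteratedpbE}), which is the top square of the proposition's diagram, is a homotopy pullback by construction, and it sits over the homotopy fibration $\nameddright{E}{}{X}{h}{Z}$ on the left and $\nameddright{E\vee(Y\rtimes\Omega Z)}{}{X\vee Y}{\overline{h}}{Z}$ on the right, which is exactly the homotopy fibration diagram claimed.

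First I would establish that $p_1 \circ \psi_E \simeq \mathrm{id}_E$. The pinch map $p_1 \colon X \vee Y \to X$ covers, over $Z$, a map $E \vee (Y \rtimes \Omega Z) \to E$, and this induced map is precisely the map $r$ appearing in the iterated homotopy pullback~(\ref{iteratedpbE2}). By Lemma~\ref{rid}, $r$ is homotopic to the pinch map $p_1$. Now the composite $r \circ \psi_E$ is the map of homotopy fibres induced by $p_1 \circ \psi \simeq \mathrm{id}_X$; since this base map is the identity, the induced map on fibres is the identity on $E$ (up to homotopy). Hence $p_1 \circ \psi_E \simeq r \circ \psi_E \simeq \mathrm{id}_E$, which is the first co-action condition.

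Next I would check $p_2 \circ \psi_E \simeq \delta_E$. Here the relevant observation is that $\overline{h} = p_1 \circ (h \vee 1)$ by naturality of the pinch map, as already noted in the text, so diagram~(\ref{iteratedpbE}) applies and the pinch map $p_2 \colon X \vee Y \to Y$ is covered over $Z$ by the map $t \colon E \vee (Y\rtimes\Omega Z) \to Y \rtimes \Omega Z$ of~(\ref{iteratedpbE}), which by Lemma~\ref{tid} is homotopic to the pinch map $p_2$ on $E \vee (Y\rtimes\Omega Z)$. Therefore $p_2 \circ \psi_E \simeq t \circ \psi_E$. But $t \circ \psi_E$ is, by pasting the upper pullback squares of~(\ref{iteratedpbE}), exactly the map of homotopy fibres induced over $Z$ by the composite $(h\vee 1)\circ\psi \colon X \to Z \vee Y$ sitting over $p_1 \colon Z \vee Y \to Z$ — and that is precisely the definition of $\delta_E$ given in~(\ref{deltaEpb}). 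So $p_2 \circ \psi_E \simeq \delta_E$, completing the verification.

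The main obstacle is bookkeeping rather than any genuinely new idea: one must be careful that the maps $r$, $t$, and $\psi_E$ really do arise as the fibre maps in the correct iterated pullback diagrams, since~(\ref{iteratedpbE}) and~(\ref{iteratedpbE2}) present $\overline{h}$ via two different (but homotopic) factorizations through $X \vee Y$, and the identification $p_2 \circ \psi_E \simeq \delta_E$ uses the factorization $\overline{h} = p_1\circ(h\vee 1)$ while $p_1 \circ \psi_E \simeq \mathrm{id}$ uses $\overline{h} = h \circ p_1$. Ensuring these two factorizations induce compatible identifications of the homotopy fibre $E \vee (Y\rtimes\Omega Z)$ — which follows from Lemma~\ref{fibwedge} applied consistently — and that the induced-fibre-map argument (a pullback over a base map homotopic to the identity induces a fibre map homotopic to the identity) is applied correctly, is where care is needed. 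Once Lemmas~\ref{tid} and~\ref{rid} are invoked, the proposition follows by assembling these observations.
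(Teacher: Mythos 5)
Your proposal is correct and follows essentially the same route as the paper: both arguments verify the two co-action identities by using the iterated homotopy pullback~(\ref{iteratedpbE2}) together with Lemma~\ref{rid} to conclude $p_{1}\circ\psi_{E}\simeq\mathrm{id}_{E}$ from $p_{1}\circ\psi\simeq\mathrm{id}_{X}$, and the iterated homotopy pullback~(\ref{iteratedpbE}) (which is the defining diagram~(\ref{deltaEpb}) for $\delta_{E}$) together with Lemma~\ref{tid} to conclude $p_{2}\circ\psi_{E}\simeq\delta_{E}$. The only cosmetic difference is that you make the bookkeeping about the two factorizations of $\overline{h}$ explicit, which the paper leaves implicit.
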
  

\begin{proof} 
First consider the iterated homotopy pullback in~(\ref{iteratedpbE2}). Since $\psi$  
is a homotopy co-action, $p_{1}\circ\psi$ is homotopic to the identity map on $X$. 
Therefore $r\circ\psi_{E}$ is homotopic to the identity map on $E$. But by 
Lemma~\ref{rid}, $r\simeq p_{1}$, so $p_{1}\circ\psi_{E}$ is homotopic to the 
identity map on $E$. Next, observe that the homotopy pullback defining $\delta_{E}$ 
is the iterated homotopy pullback in~(\ref{iteratedpbE}). Thus $t\circ\psi_{E}\simeq\delta_{E}$. 
Lemma~\ref{tid} then implies that $p_{2}\circ\psi_{E}\simeq\delta_{E}$. Hence $\psi_{E}$ 
is a homotopy co-action with respect to~$\delta_{E}$. 
\end{proof}

\section{A splitting criterion} 
\label{sec:splitting} 

Given data as in~(\ref{data}) having the property that $\Omega h$ has a right homotopy 
inverse, Theorem~\ref{BT} states that there is a homotopy cofibration 
\[\nameddright{A\rtimes\Omega Z}{\theta}{E}{}{E'}.\]
Ideally, properties of $\theta$ can be determined that inform on the homotopy type of $E$ or $E'$. 
In this section a criterion is proved that implies $\theta$ has a left homotopy inverse under certain 
hypotheses and there is a homotopy equivalence $E\simeq (A\rtimes\Omega Z)\vee E'$. 

Suppose that there is a map 
\(\delta\colon\namedright{X}{}{\Sigma Y}\)  
and a homotopy co-action 
\[\psi\colon\namedright{X}{}{X\vee\Sigma Y}\] 
with respect to $\delta$. Suppose as well that there is a diagram of data 
\begin{equation} 
  \label{criteriondata} 
  \diagram 
       & E\rto\dto & E'\dto  \\ 
       \Sigma Z\wedge Y\rto^-{f} & X\rto\dto^{h} & X'\dto^{h'} \\ 
       & \Sigma Z\rdouble & \Sigma Z 
  \enddiagram 
\end{equation} 
where the middle row is a homotopy cofibration and the two columns form 
a homotopy fibration diagram. Let $\gamma$ be the composite 
\begin{equation} 
  \label{gammadef} 
  \gamma\colon\nameddright{X}{\psi}{X\vee\Sigma Y}{h\vee 1}{\Sigma Z\vee\Sigma Y}.  
\end{equation} 
Since $\psi$ is a co-action, the composite $p_{1}\circ\gamma$ is homotopic to $h$.  
Define the space $D$ by the homotopy cofibration diagram 
\begin{equation} 
  \label{Ddef} 
  \diagram 
        \Sigma Z\wedge Y\rto^-{f}\ddouble & X\rto\dto^{\gamma} & X'\dto \\ 
        \Sigma Z\wedge Y\rto^-{\gamma\circ f} & \Sigma Z\vee\Sigma Y\rto &  D. 
  \enddiagram 
\end{equation}  

The prototype to think of is  when $\gamma\circ f$ is homotopic to the Whitehead 
product of the inclusions of $\Sigma Z$ and $\Sigma Y$ into $\Sigma Z\vee\Sigma Y$,  
in which case $D\simeq\Sigma Z\times\Sigma Y$. However, we wish to allow for more  
flexibility in terms of the homotopy class of $\gamma\circ f$. To get this, observe that as 
$p_{1}\circ\gamma\simeq h$ and $h\circ f$ is null homotopic, there is a null  homotopy 
for $p_{1}\circ\gamma\circ f$, implying that the pinch map  
\(\namedright{\Sigma Z\vee\Sigma Y}{p_{1}}{\Sigma Z}\) 
extends to a map 
\(\namedright{D}{\mathfrak{h}}{\Sigma Z}\). 
Let $g$ be the composite 
\[g\colon\nameddright{\Sigma Y}{i_{2}}{\Sigma Z\vee\Sigma Y}{}{D}.\] 
Then in place of $\gamma\circ f$ being a Whitehead product, giving $D\simeq\Sigma Z\times\Sigma Y$ 
and a homotopy fibration 
\(\nameddright{\Sigma Y}{g}{D}{\mathfrak{h}}{\Sigma Z}\), 
we only assume the existence of the homotopy fibration. 

\begin{theorem}  
   \label{splittingprinciple} 
   Suppose that there is a map 
   \(\delta\colon\namedright{X}{}{\Sigma Y}\)  
   and a homotopy co-action 
   \(\psi\colon\namedright{X}{}{X\vee\Sigma Y}\) 
   with respect to $\delta$, and data as in~(\ref{criteriondata}). If: 
   \begin{letterlist} 
      \item $\Omega h$ has a right homotopy inverse and 
      \item there is a homotopy fibration 
               \(\nameddright{\Sigma Y}{g}{D}{\mathfrak{h}}{\Sigma Z}\), 
   \end{letterlist}    
   then the homotopy cofibration 
   \(\nameddright{(\Sigma Z\wedge Y)\rtimes\Omega\Sigma Z}{\theta}{E}{}{E'}\)  
   obtained by applying Theorem~\ref{BT} to~(\ref{criteriondata}) splits: the map $\theta$ has 
   a left homotopy inverse and there is a homotopy equivalence 
   \[E\simeq((\Sigma Z\wedge Y)\rtimes\Omega\Sigma Z)\vee  E'.\] 
\end{theorem}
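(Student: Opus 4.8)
The plan is to use the homotopy co-action machinery from Section~\ref{sec:background} to transport the co-action $\psi$ on $X$ down to the fibre $E$, and then combine this with Theorem~\ref{BT} to exhibit the map $E'\to E$ that splits $\theta$. First I would apply Proposition~\ref{co-action} with $Z$ there replaced by $\Sigma Z$ and $Y$ there replaced by $\Sigma Y$. Hypothesis (a) says $\Omega h$ has a right homotopy inverse, so the proposition gives a homotopy fibration diagram with $\psi_{E}\colon E\to E\vee(\Sigma Y\rtimes\Omega\Sigma Z)$ a homotopy co-action over $\Sigma Z$, with respect to the map $\delta_{E}$ defined in~(\ref{deltaEpb}). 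The key point is that since $\Sigma Y$ is a suspension, $\Sigma Y\rtimes\Omega\Sigma Z\simeq\Sigma Y\vee(\Sigma Y\wedge\Omega\Sigma Z)$, and more importantly $\Sigma Y\rtimes\Omega\Sigma Z$ receives the map $g$-related data; I want to massage $\psi_{E}$ into a co-action $E\to E\vee((\Sigma Z\wedge Y)\rtimes\Omega\Sigma Z)$, matching the source of $\theta$ in the conclusion of Theorem~\ref{BT} applied to~(\ref{criteriondata}).

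Next I would bring in the space $D$ and the diagram~(\ref{Ddef}). Applying Theorem~\ref{BT} to~(\ref{criteriondata}) produces the homotopy cofibration $\nameddright{(\Sigma Z\wedge Y)\rtimes\Omega\Sigma Z}{\theta}{E}{}{E'}$ whose restriction to $\Sigma Z\wedge Y$ lifts $f$. Meanwhile, hypothesis (b) gives a homotopy fibration $\nameddright{\Sigma Y}{g}{D}{\mathfrak{h}}{\Sigma Z}$; I would take the homotopy fibre of $\gamma\colon X\to \Sigma Z\vee\Sigma Y$ over the map $\mathfrak{h}$ — or rather compare fibres of $h$ and $\mathfrak h$ via the map $X\to D$ of~(\ref{Ddef}) — to see that the homotopy fibre of $D\to\Sigma Z$ restricted appropriately is $\Sigma Y$, and that this fibre maps compatibly to $E$ via $\delta_{E}$. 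The plan is then to note that the homotopy pushout~(\ref{Ddef}) — which has $X$ and $X'$ glued along $\Sigma Z\wedge Y$ to produce $D$ out of $\Sigma Z\vee\Sigma Y$ — when we pass to homotopy fibres over $\Sigma Z$ (using Lemma~\ref{cube}), turns into a homotopy pushout identifying $E$ as $E'$ with a copy of $(\Sigma Z\wedge Y)\rtimes\Omega\Sigma Z$ wedged on, \emph{provided} the relevant attaching data splits off. Hypothesis (b), that $\Sigma Y\to D\to\Sigma Z$ is a fibration (not merely that $g$ and $\mathfrak h$ exist), is exactly what guarantees the fibre of $D\to\Sigma Z$ is $(\Sigma Z\wedge Y)\rtimes\Omega\Sigma Z\vee\Sigma Y$ splitting as a wedge via Lemma~\ref{fibwedge}/Lemma~\ref{p1fib}, and this is what feeds the splitting.

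Concretely, the argument I envisage: form the homotopy fibres of all three vertical-composite maps to $\Sigma Z$ in~(\ref{Ddef}). The left column has fibre $E$; the map $X'\to D$ has some fibre, call it $E''$; and $\Sigma Z\vee\Sigma Y\to\Sigma Z$ has fibre $\Sigma Y\rtimes\Omega\Sigma Z$ by Lemma~\ref{p1fib}. Since~(\ref{Ddef}) is a homotopy pushout of the top row along the identity of $\Sigma Z\wedge Y$, Lemma~\ref{cube} (Mather's cube) applied with base $\Sigma Z$ gives that the square of fibres is again a homotopy pushout; reading off corners, $E$ sits in a pushout with $\Sigma Z\wedge Y$ (the top-cofibre, with its loop-space smashing) mapping in, which recovers and refines Theorem~\ref{BT}'s cofibration, but now I also have the co-action $\psi_E$ compatible with the inclusion $\Sigma Y\rtimes\Omega\Sigma Z\hookrightarrow D$-fibre. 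Using that $\psi_E$ is a co-action over $\Sigma Z$ with respect to $\delta_E$, and that $\delta_E$ factors through $(\Sigma Z\wedge Y)\rtimes\Omega\Sigma Z$ (this is where the precise form of~(\ref{deltaEpb}) and hypothesis (b) are used), I would build the left homotopy inverse $E\to (\Sigma Z\wedge Y)\rtimes\Omega\Sigma Z$ as $t\circ\psi_E$ post-composed with an appropriate retraction, and simultaneously get $E\to E'$; the map $(t\circ\psi_E,\, \mathrm{proj})\colon E\to ((\Sigma Z\wedge Y)\rtimes\Omega\Sigma Z)\vee E'$ is then checked to be a homotopy equivalence by comparing to the cofibration of Theorem~\ref{BT}.

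The main obstacle I expect is the bookkeeping of identifying $\delta_E$ — the induced co-action target map of~(\ref{deltaEpb}) — with something that factors through $(\Sigma Z\wedge Y)\rtimes\Omega\Sigma Z$ compatibly with $\theta$, i.e.\ showing that the co-action $\psi_E$ produced by Proposition~\ref{co-action} really does provide a \emph{retraction} off $E$ of the cofibre term $\theta$ rather than just some co-action into a different wedge summand. This hinges on hypothesis (b) being used not just to know $D\simeq$ (something with a section after looping) but to pin down that the fibre inclusion $\Sigma Y\rtimes\Omega\Sigma Z\to E''$-side matches $\theta$ under the pushout of fibres. Tracking these identifications through the iterated pullbacks~(\ref{iteratedpbE}),~(\ref{iteratedpbE2}) and the cube coming from~(\ref{Ddef}), and checking the two maps out of $E$ assemble to an equivalence on the level of the long exact cofibre sequence, is the technical heart; everything else is formal consequence of Lemmas~\ref{cube}--\ref{p1fib} and Proposition~\ref{co-action}.
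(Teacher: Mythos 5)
Your overall route is the paper's: descend the co-action to $E$ via Proposition~\ref{co-action}, take homotopy fibres over $\Sigma Z$ of the pushout~(\ref{Ddef}) using Lemma~\ref{cube}, and finally use $\psi_{E}$ to ``add'' the map $E\to E'$ to a retraction onto $(\Sigma Z\wedge Y)\rtimes\Omega\Sigma Z$ and conclude by homology and Whitehead's theorem. But there is a genuine gap precisely at the point you yourself flag as the technical heart: you never construct the ``appropriate retraction'' \(\namedright{\Sigma Y\rtimes\Omega\Sigma Z}{}{(\Sigma Z\wedge Y)\rtimes\Omega\Sigma Z}\), and the one concrete mechanism you offer for it is a misstatement. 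Hypothesis~(b) does not say, and Lemmas~\ref{fibwedge}/\ref{p1fib} do not give, that the fibre of \(\namedright{D}{\mathfrak{h}}{\Sigma Z}\) is $((\Sigma Z\wedge Y)\rtimes\Omega\Sigma Z)\vee\Sigma Y$: hypothesis~(b) says that fibre is $\Sigma Y$ itself, and Lemma~\ref{fibwedge} only identifies fibres of maps out of a wedge that factor through $p_{1}$, which $\mathfrak{h}$ is not. The splitting of $\Sigma Y\rtimes\Omega\Sigma Z$ \emph{compatibly with} $\theta$ is a conclusion to be proved, not an input.

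What is actually needed, and what the paper does, is the following. First, identify the fibre-level map $\alpha\colon\namedright{E}{}{\Sigma Y\rtimes\Omega\Sigma Z}$ in the cube with $\delta_{E}$, using the iterated pullbacks~(\ref{iteratedpbE}), (\ref{iteratedpbE2}) together with Lemmas~\ref{tid} and~\ref{rid}. Second, use hypothesis~(b) through the \emph{front} face of the cube being a pullback: since the fibre inclusion $g\colon\namedright{\Sigma Y}{}{D}$ factors through $i_{2}\colon\namedright{\Sigma Y}{}{\Sigma Z\vee\Sigma Y}$, the identity map of $\Sigma Y$ lifts to a map $t\colon\namedright{\Sigma Y}{}{\Sigma Y\rtimes\Omega\Sigma Z}$, which is a section of the top-face map \(\namedright{\Sigma Y\rtimes\Omega\Sigma Z}{}{\Sigma Y}\). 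Third, since the top-face pushout extends the cofibration of Theorem~\ref{BT} to a cofibration \(\nameddright{(\Sigma Z\wedge Y)\rtimes\Omega\Sigma Z}{\alpha\circ\theta}{\Sigma Y\rtimes\Omega\Sigma Z}{}{\Sigma Y}\), the map $t\perp(\alpha\circ\theta)$ is a homology isomorphism, hence an equivalence, so $\alpha\circ\theta$ has a left inverse $r$ and $r\circ\delta_{E}$ is a left inverse for $\theta$; only then does your final assembly via $\psi_{E}$ go through. Note also that $\delta_{E}$ need not ``factor through $(\Sigma Z\wedge Y)\rtimes\Omega\Sigma Z$'' as you suggest --- one only needs $r\circ\delta_{E}$ --- and your fibre $E''$ of \(\nameddright{X'}{}{D}{\mathfrak{h}}{\Sigma Z}\) must still be identified with $E'$, which requires taking the extension $h'$ of $h$ to be this composite, a choice the paper makes explicitly.
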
 

\begin{proof} 
Start with the homotopy pushout in the right square of~(\ref{Ddef}). Compose the maps in  
the pushout with 
\(\namedright{D}{\mathfrak{h}}{\Sigma Z}\) 
and take homotopy fibres. Note that, by definition, $\mathfrak{h}$ extends the pinch map 
\(\namedright{\Sigma Z\vee\Sigma Y}{p_{1}}{\Sigma Z}\), 
and as mentioned just before~(\ref{Ddef}), $p_{1}\circ\gamma\simeq h$. The map 
\(\namedright{X'}{h'}{\Sigma Z}\) 
was defined as an extension of $h$ and its homotopy fibre was named $E'$; this involved 
a choice of extension, and one choice is the composite  
\(\nameddright{X'}{}{D}{\mathfrak{h}}{\Sigma Z}\). 
Taking this choice of $h'$ we obtain homotopy fibrations 
\begin{align*} 
   E\longrightarrow X & \stackrel{h}{\longrightarrow}\Sigma Z \\ 
   E'\longrightarrow X'  & \stackrel{h'}{\longrightarrow}\Sigma Z \\  
   \Sigma Y\rtimes\Omega\Sigma Z\longrightarrow\Sigma Z\vee\Sigma Y  
        & \stackrel{p_{1}}{\longrightarrow}\Sigma Z \\ 
   \Sigma B\longrightarrow D  & \stackrel{\mathfrak{h}}{\longrightarrow}\Sigma Z 
\end{align*} 
where the last homotopy fibration holds by hypothesis~(b). By Lemma~\ref{cube}, there 
is a homotopy commutative cube 
\begin{equation} 
   \label{splittingcube} 
   \spreaddiagramcolumns{-1pc}\spreaddiagramrows{-1pc} 
   \diagram
      E\rrto\drto^{\alpha}\ddto & & E'\dline\drto & \\
      & \Sigma Y\rtimes\Omega\Sigma Z\rrto\ddto & \dto & \Sigma Y\ddto \\
      X\rline\drto^{\gamma} & \rto & X'\drto & \\
      & \Sigma Z\vee\Sigma Y\rrto & & D
  \enddiagram 
\end{equation} 
where the bottom and top faces are homotopy pushouts, the top face is a homotopy pullback, 
and the homotopy pullback in the left face defines $\alpha$. 
\medskip  

\noindent 
\textit{Step 1: Identifying $\alpha$}. Consider the iterated homotopy fibration diagram 
\begin{equation}  
  \label{alphaid} 
  \diagram 
     E\rto^-{\psi_{E}}\dto & E\vee(\Sigma Y\rtimes\Omega\Sigma Z)\rto^-{p_{2}}\dto 
         & \Sigma Y\rtimes\Omega\Sigma Z\dto \\ 
     X\rto^-{\psi}\dto & X\vee\Sigma Y\rto^-{h\vee 1}\dto^{\overline{h}} & \Sigma Z\vee\Sigma Y\dto^{p_{1}} \\ 
     \Sigma Z\rdouble & \Sigma Z\rdouble & \Sigma Z 
  \enddiagram 
\end{equation}  
where $\overline{h}=p_{1}\circ(h\vee 1)$ and the induced map of fibres in the right rectangle 
was identified as $p_{2}$ in Lemma~\ref{rid}. By Proposition~\ref{co-action}, $\psi_{E}$ is a 
homotopy co-action with respect to $\delta_{E}$, where $\delta_{E}=p_{2}\circ\psi_{E}$. 
By definition, $\gamma=(h\vee 1)\circ\psi$ is the composite along the middle row 
of~(\ref{alphaid}), so the iterated homotopy pullbacks in the upper squares of~(\ref{alphaid}) 
show that pulling $\gamma$ back with 
\(\namedright{\Sigma Y\rtimes\Omega Z}{}{\Sigma Z\vee\Sigma Y}\) 
determines the homotopy class of 
\(\namedright{E}{}{\Sigma Y\rtimes\Omega\Sigma Z}\). 
On the one hand, this is $\delta_{E}$, and on the other hand, this is the map $\alpha$ 
in~(\ref{splittingcube}). Therefore $\alpha\simeq\delta_{E}$. 
\medskip 

\noindent 
\textit{Step 2: An expanded homotopy pushout}.
The rear face in the cube~(\ref{splittingcube}) fits in the diagram of data~(\ref{criteriondata}). Thus by 
Theorem~\ref{BT} there is a homotopy cofibration 
\[\nameddright{(\Sigma Z\wedge Y)\rtimes\Omega\Sigma Z}{\theta}{E}{}{E'}\]  
for some map $\theta$. The homotopy pushout in the top face of the cube~(\ref{splittingcube}) can therefore  
be expanded to a homotopy cofibration diagram 
\begin{equation} 
  \label{extendedtheta} 
  \diagram 
      (\Sigma Z\wedge Y)\rtimes\Omega\Sigma Z\rto^-{\theta}\ddouble 
            & E\rto\dto^{\alpha} & E'\dto \\ 
      (\Sigma Z\wedge Y)\rtimes\Omega\Sigma Z\rto^-{\alpha\circ\theta} 
            & \Sigma Y\rtimes\Omega\Sigma Z\rto & \Sigma Y. 
  \enddiagram 
\end{equation}  
\medskip  

\noindent 
\textit{Step 3: a left homotopy inverse for $\theta$}: 
The homotopy pullback in the front face of the cube implies that there is a pullback map $t$  
making the following diagram homotopy commute
\begin{equation} 
  \label{SigmaBsplitting} 
  \xymatrix{ 
     \Sigma  Y\ar@/^/[drr]^{=}\ar@/_/[ddr]_{i_{2}}\ar@{.>}[dr]^{t} & & \\ 
     & \Sigma Y\rtimes\Omega\Sigma Z\ar[r]\ar[d] & \Sigma Y\ar[d] \\ 
     & \Sigma Z\vee\Sigma Y\ar[r] & D. }
\end{equation} 
Consequently, $t$ is a section for the homotopy cofibration  
\(\nameddright{(\Sigma Z\wedge Y)\rtimes\Omega\Sigma Z}{\alpha\circ\theta} 
      {\Sigma Y\rtimes\Omega\Sigma Z}{}{\Sigma Y}\), 
implying that the map 
\[\lllnamedright{\Sigma Y\vee((\Sigma Y\wedge Z)\rtimes\Omega\Sigma Z)}{t\perp(\alpha\circ\theta)} 
      {\Sigma Y\rtimes\Omega\Sigma Z}\] 
induces an isomorphism in homology and so is a homotopy equivalence by Whitehead's theorem.      
This implies that $\alpha\circ\theta$ has a left homotopy inverse 
\(r\colon\namedright{\Sigma Y\rtimes\Omega\Sigma Z}{}{(\Sigma Z\wedge Y)\rtimes\Omega\Sigma Z}\). 
The homotopy commutativity of the left square in~(\ref{extendedtheta}) then implies that 
$r\circ\alpha$ is a left homotopy inverse for $\theta$. 
\medskip 

\noindent 
\textit{Step 4: a wedge decomposition for $E$}.  
Finally, we wish to show that the left homotopy inverse for $\theta$ implies a splitting for $E$. 
In general, a homotopy cofibration 
\(\nameddright{A}{a}{B}{}{C}\)  
with a left homotopy inverse for $a$ does not result in a homotopy equivalence $B\simeq A\vee C$ 
unless there is a way to ``add" the maps 
\(\namedright{B}{}{A}\)  
and 
\(\namedright{B}{}{C}\) 
to produce a map 
\(\namedright{B}{}{A\vee C}\).  
This works if $B$ is a co-$H$-space but in our case, $E$ may not be a co-$H$-space. However, 
by Lemma~\ref{co-action} there is a homotopy co-action  
\(\namedright{E}{\psi_{E}}{E\vee(\Sigma Y\rtimes\Omega\Sigma Z)}\)  
and this will be used as the means to ``add".

Consider again the homotopy cofibration 
\(\nameddright{(\Sigma Z\wedge Y)\rtimes\Omega\Sigma Z}{\theta}{E}{g}{E'}\), 
where $g$ is simply a name for the map from $E$ to $E'$. By Step~$1$, $\alpha\simeq\delta_{E}$. 
Substituting this into Step~$3$ implies that $r\circ\delta_{E}$ 
is a left homotopy inverse for $\theta$. Now consider the composite 
\[e\colon\lnameddright{E}{\psi_{E}}{E\vee(\Sigma Y\rtimes\Omega\Sigma Z)}{g\vee r} 
       {E'\vee((\Sigma Z\wedge Y)\rtimes\Omega\Sigma Z)}\] 
where $r$ is the map from Step~$3$.  
Since $\psi_{E}$ is a co-action, $p_{1}\circ\psi_{E}$ is homotopic to the identity map on $E$ 
and $p_{2}\circ\psi_{E}$ is homotopic to~$\delta_{E}$. The naturality of $p_{1}$ and $p_{2}$  
then imply that $p_{1}\circ e\simeq g$ and $p_{2}\circ e\simeq r\circ\delta_{E}$. As $r\circ\delta_{E}$ 
is a left homotopy inverse for $\theta$, the map $e$ induces an isomorphism in homology and 
so is a homotopy equivalence by Whitehead's theorem. 
\end{proof}

\section{Inert attaching maps I} 
\label{sec:inert} 

Theorem~\ref{splittingprinciple} will be applied in the context of Poincar\'{e} Duality complexes. 
This will take a bit of setting up. Let $M$ be a simply-connected closed Poincar\'{e} 
Duality complex of dimension~$n$, where~$n\geq 4$. 
Let~$\overline{M}$ be the $(n-1)$-skeleton of $M$. Then there is a homotopy cofibration 
\[\nameddright{S^{n-1}}{f}{\overline{M}}{i}{M}\] 
where $f$ attaches the top cell to $M$ and $i$ is the skeletal inclusion. 

Suppose that $M$ is $(m-1)$-connected, for $m\geq 2$. Note that Poincar\'{e} Duality implies 
that $n\geq 2m$, so $n-m\geq m$. Suppose that there is a map 
\[h'\colon\namedright{M}{}{S^{m}}\] 
with a right homotopy inverse. If $\iota\in H^{m}(S^{m})\cong\mathbb{Z}$ represents a generator, 
let $x=(h')^{\ast}(\iota)\in H^{m}(M)$. Since $h'$ has a right homotopy inverse, $x$ generates 
a $\mathbb{Z}$-summand in $H^{m}(M)$. By Poincar\'{e} Duality, there is a class $y\in H^{n-m}(M)$ 
generating a $\mathbb{Z}$-summand and such that $x\cup y=z$, where~$z$ is a generator of $H^{n}(M)$ 
(for example, see~\cite[Corollary 3.39]{H}). It may be worth noting that as $h'$ has a right homotopy 
inverse, $x^{2}=0$, so $y\neq x$. 

Let $h$ be the composite 
\[h\colon\nameddright{\overline{M}}{i}{M}{h'}{S^{m}}.\]  
As $s$ factors through the $(n-1)$-skeleton of $M$, the map $h$ also has a right homotopy inverse. 
As in~(\ref{data}), we obtain a diagram of data 
\begin{equation} 
  \label{Mdata} 
  \diagram 
       & E\rto\dto & E'\dto \\ 
       S^{n-1}\rto^-{f} & \overline{M}\rto^-{i}\dto^{h} & M\dto^{h'} \\ 
       & S^{m}\rdouble & S^{m}  
  \enddiagram 
\end{equation} 
that defines the spaces $E$ and $E'$. (As a reminder, the middle row in this diagram is a homotopy 
cofibration and the two columns form a homotopy fibration diagram.) Since $h$ has a right homotopy 
inverse, by Theorem~\ref{BT} there is a homotopy cofibration 
\begin{equation} 
  \label{Mdatacofib}  
  \nameddright{S^{n-1}\rtimes\Omega S^{m}}{\theta}{E}{}{E'}  
\end{equation} 
for some map $\theta$. In Proposition~\ref{Esplitting} we will find conditions that guarantee that this 
homotopy cofibration splits. 

For $k\geq 1$, let $M_{k}$ be the $k$-skeleton of $M$. Since $M$ is $(m-1)$-connected,  
we have $H_{i}(M)\cong 0$ for $0<i<m$. Poincar\'{e} Duality then implies that $H^{i}(M)\cong 0$ 
for $m-n<i<n$, and the universal coefficient theorem therefore implies that $H_{i}(M)\cong 0$ 
for $m-n<i<n$. Thus $\overline{M}\simeq M_{n-m}$. Further, we are assuming that $H_{m}(M)$ 
has a $\mathbb{Z}$-summand, so Poincar\'{e} Duality implies that $H^{n-m}(M)$ has a 
$\mathbb{Z}$-summand, and the universal coefficient theorem then implies that $H_{n-m}(M)$ 
has a $\mathbb{Z}$-summand. Thus $\overline{M}$ has at least one cell in dimension $n-m$. 

There is a homotopy cofibration that attaches the $(n-m)$-cells to $\overline{M}$ of the form 
\[\nameddright{\bigvee_{i=1}^{d}  S^{n-m-1}}{}{M_{n-m-1}}{}{\overline{M}}.\] 
As $\overline{M}$ has at least one $(n-m)$-cell, we have $d\geq 1$. This homotopy cofibration 
has a connecting map 
\[\namedright{\overline{M}}{\delta'}{\bigvee_{i=1}^{d} S^{n-m}}\] 
and a homotopy co-action 
\[\psi'\colon\namedright{\overline{M}}{}{\overline{M}\vee(\bigvee_{i=1}^{d}  S^{n-m})}\] 
with respect to $\delta'$. Identify $y\in H^{n-m}(M)$ with the corresponding class in $H^{n-m}(\overline{M})$. 
Since $\delta'$ collapses out the $M_{n-m-1}$-skeleton, $y$ is in the image of $(\delta')^{\ast}$. 
Changing $\bigvee_{i=1}^{d} S^{n-m-1}$ by a self-equivalence if necessary, we may 
assume that the restriction of~$(\delta')^{\ast}$ to the $i=1$ summand has image $y$. 
Let $\delta$ be the composite 
\[\delta\colon\nameddright{\overline{M}}{\delta'}{\bigvee_{i=1}^{d} S^{n-m}}{p_{1}}{S^{n-m}}\] 
where $p_{1}$ is the pinch map to the first wedge summand. Then $\delta^{\ast}$ has 
image $y$. Let $\psi$ be the composite 
\[\psi\colon\lnameddright{\overline{M}}{\psi'}{\overline{M}\vee(\bigvee_{i=1}^{d} S^{n-m})}{1\vee p_{1}}  
      {\overline{M}\vee S^{n-m}}.\]  
It is straightforward to check that $\psi'$ being a homotopy co-action with respect to $\delta'$ 
implies that $\psi$ is a homotopy co-action with respect to $\delta$. 

\begin{proposition} 
   \label{Esplitting} 
   Let $M$ be an $(m-1)$-connected closed Poincar\'{e} Duality complex of 
   dimension~$n$, where~$2\leq m<n$. If there is a map 
   \(\namedright{M}{h'}{S^{m}}\) 
   having a right homotopy inverse, then the homotopy cofibration~(\ref{Mdatacofib}) splits 
   and there is a homotopy equivalence $E\simeq(S^{n-1}\rtimes\Omega S^{m})\vee E'$. 
\end{proposition}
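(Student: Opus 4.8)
The plan is to verify the hypotheses of Theorem~\ref{splittingprinciple} for the data in~(\ref{Mdata}), with the substitutions $X=\overline{M}$, $X'=M$, $Z=S^{m-1}$ (so that $\Sigma Z=S^{m}$), $A=S^{n-1}$, $Y=S^{n-m-1}$ (so that $\Sigma Y=S^{n-m}$), and with $\delta$, $\psi$ the maps constructed just before the statement. The first thing to check is that $A=S^{n-1}$ really is of the form $\Sigma Z\wedge Y=S^{m}\wedge S^{n-m-1}=S^{n-1}$, which is where the numerology $n\geq 2m$, hence $n-m-1\geq m-1\geq 1$, is used to make sure $Y$ is a genuine (suspended) space and everything is at least a circle. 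Hypothesis~(a), that $\Omega h$ has a right homotopy inverse, is immediate: $h=h'\circ i$ where $h'$ has a right homotopy inverse factoring through the $(n-1)$-skeleton $\overline{M}$, as already observed in the text, so $\Omega h$ has one too.

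The real content is hypothesis~(b): that the space $D$ defined by the cofibration diagram~(\ref{Ddef}) fits into a homotopy fibration \(\nameddright{S^{n-m}}{g}{D}{\mathfrak{h}}{S^{m}}\). Here $D$ is the cofibre of $\gamma\circ f\colon S^{n-1}\to S^{m}\vee S^{n-m}$, where $\gamma=(h\vee 1)\circ\psi\colon\overline{M}\to S^{m}\vee S^{n-m}$. I would compute the homotopy class of $\gamma\circ f$. The key point is the cohomological bookkeeping set up before the statement: $h^{*}$ carries the generator of $H^{m}(S^{m})$ to $x$ and the co-action $\psi$ is rigged so that the projection to $S^{n-m}$ pulls the generator back to $y$, with $x\cup y=z$ a generator of $H^{n}(M)=H^{n}$ — in particular $x$ and $y$ are dual top classes. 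Since $f$ attaches the top cell of $M$, the composite $\gamma\circ f$ detects exactly the cup-product structure $x\cup y$: I expect to show $\gamma\circ f$ is homotopic to the Whitehead product $[i_{1},i_{2}]$ of the two inclusions into $S^{m}\vee S^{n-m}$ (this is the ``prototype'' case flagged in the paragraph after~(\ref{Ddef})), possibly up to the indeterminacy coming from the other $(n-m)$-cells and lower-dimensional attaching data, which lands in the kernel of the relevant Hurewicz/cup-product map and so does not affect the conclusion. Granting that, $D\simeq S^{m}\times S^{n-m}$, and the projection $S^{m}\times S^{n-m}\to S^{m}$ is exactly $\mathfrak{h}$ while the inclusion $S^{n-m}\hookrightarrow S^{m}\times S^{n-m}$ is $g$; the evaluation fibration $S^{n-m}\to S^{m}\times S^{n-m}\to S^{m}$ is the required homotopy fibration.

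With both hypotheses in hand, Theorem~\ref{splittingprinciple} applies directly: the map $\theta$ in the cofibration~(\ref{Mdatacofib}), which is the instance of the $\theta$ from Theorem~\ref{BT} for the data~(\ref{Mdata}) with $\Sigma Z\wedge Y=S^{m}\wedge S^{n-m-1}=S^{n-1}$ and $\Omega\Sigma Z=\Omega S^{m}$, has a left homotopy inverse, and $E\simeq((\Sigma Z\wedge Y)\rtimes\Omega\Sigma Z)\vee E'=(S^{n-1}\rtimes\Omega S^{m})\vee E'$. That is the claimed conclusion.

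I expect the main obstacle to be pinning down the homotopy class of $\gamma\circ f$ precisely enough — i.e.\ identifying it with the Whitehead product (or at least showing that any correction term is irrelevant for producing the fibration in~(b)). This requires translating the Poincar\'{e} Duality cup-product relation $x\cup y=z$ into a statement about the attaching map $f$ of the top cell, which is the one genuinely geometric input; everything else is formal manipulation of co-actions and cube lemmas already packaged in Sections~\ref{sec:background} and~\ref{sec:splitting}. A secondary point requiring care is the choice of basis among the $d$ cells in dimension $n-m$: one uses the freedom to change $\bigvee_{i=1}^{d}S^{n-m-1}$ by a self-equivalence (as already arranged in the text) so that the single summand $S^{n-m}$ appearing in $\psi$ carries precisely the dual class $y$, and one must check this does not disturb the rest of the argument.
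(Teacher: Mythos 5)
Your framework is the right one and matches the paper's: same substitutions $X=\overline{M}$, $X'=M$, $\Sigma Z=S^{m}$, $\Sigma Y=S^{n-m}$, the same $\delta$ and $\psi$, and hypothesis~(a) disposed of exactly as in the text. But there is a genuine gap at the step you yourself flag as the main obstacle, hypothesis~(b). You propose to show $\gamma\circ f\simeq[i_{1},i_{2}]$, conclude $D\simeq S^{m}\times S^{n-m}$, and read off the fibration from the product projection. This identification is not justified and is false in general. By the Hilton--Milnor decomposition, $\gamma\circ f\in\pi_{n-1}(S^{m}\vee S^{n-m})$ has the form $c\cdot[i_{1},i_{2}]+i_{1}\circ\alpha+i_{2}\circ\beta$; the cup-product relation $x\cup y=z$ only controls the Whitehead coefficient $c=\pm 1$ (and $\alpha=0$ comes for free since $p_{1}\circ\gamma\circ f\simeq h\circ f$ is null), but it says nothing about $\beta\in\pi_{n-1}(S^{n-m})$, which is essentially $\delta\circ f$ and need not vanish. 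Such a correction term does not change the cohomology ring of $D$, but it does change the homotopy type of $D$ (for instance an $\eta$-type term is detected by a Steenrod operation distinguishing $D$ from $S^{m}\times S^{n-m}$), so your remark that the indeterminacy ``does not affect the conclusion'' is exactly the point that needs proof and, as stated, is an internal inconsistency: cohomology alone cannot give $D\simeq S^{m}\times S^{n-m}$.

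The paper's proof is designed precisely to avoid this: hypothesis~(b) of Theorem~\ref{splittingprinciple} was formulated to require only the existence of a fibration \(\nameddright{S^{n-m}}{g}{D}{\mathfrak{h}}{S^{m}}\), not a product decomposition, and this weaker statement survives the correction term. Concretely, one first shows $H^{\ast}(D)\cong\Lambda(a,b)$ as algebras by comparing with $M$: the pushout map \(\namedright{M}{\gamma'}{D}\) from~(\ref{Dpo}) satisfies $(\gamma')^{\ast}(a)=x$, $(\gamma')^{\ast}(b)=y$, is an isomorphism on $H^{n}$, and is an algebra map, so $x\cup y=z$ forces $a\cup b=c$. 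One then constructs \(\namedright{D}{\mathfrak{h}}{S^{m}}\) as the pushout extension of $p_{1}$ and $h'$ (available because $p_{1}\circ\gamma\simeq h$ extends over $M$), and runs the Serre spectral sequence of $\mathfrak{h}$ to see that its homotopy fibre has the cohomology of $S^{n-m}$, hence is $S^{n-m}$ by Whitehead's theorem. If you replace your Whitehead-product identification by this cohomological argument, the rest of your write-up goes through verbatim.
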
 

\begin{proof} 
In the context of Theorem~\ref{splittingprinciple}, we start with the map 
\(\namedright{\overline{M}}{\delta}{S^{n-m}}\) 
and the homotopy co-action 
\(\namedright{\overline{M}}{\psi}{\overline{M}\vee S^{n-m}}\).  
The diagram of data~(\ref{Mdata}) corresponds to~(\ref{criteriondata}) with $X=\overline{M}$, 
$X'=M$, $\Sigma Z=S^{m}$ and $\Sigma Y=S^{n-m}$. The map $\gamma$ in~(\ref{gammadef}) is the composite 
\[\gamma\colon\nameddright{\overline{M}}{\psi}{\overline{M}\vee S^{n-m}}{h\vee 1}{S^{m}\vee S^{n-m}}\] 
and there is a homotopy cofibration diagram: 
\begin{equation} 
  \label{Dpo} 
  \diagram 
     S^{n-1}\rto^-{f}\ddouble & \overline{M}\rto^-{i}\dto^{\gamma} & M\dto^{\gamma'} \\ 
     S^{n-1}\rto^-{\gamma\circ f} & S^{m}\vee S^{n-m}\rto & D 
  \enddiagram 
\end{equation} 
that defines the space $D$ and the map $\gamma'$. The aim is to show that conditions~(a) 
and~(b) of Theorem~\ref{splittingprinciple} hold in order to obtain a splitting of the homotopy  
cofibration~(\ref{Mdatacofib}). 

First, we describe $H^{\ast}(D)$. The homotopy cofibration in the bottom row of~(\ref{Dpo}) 
implies that $H^{\ast}(D)$ is isomorphic as a $\mathbb{Z}$-module to $\mathbb{Z}\{a,b,c\}$ 
where $\vert a\vert=m$, $\vert b\vert=n-m$ and $\vert c\vert=n$. Since the map 
\(\namedright{S^{m}\vee  S^{n-m}}{}{D}\) 
is the inclusion of the $(n-1)$-skeleton, it induces an isomorphism on $H^{m}$ and $H^{n-m}$. 
So we may write $H^{\ast}(S^{m}\vee  S^{n-m})$ as $\mathbb{Z}\{a,b\}$. By definition, 
$\gamma=(h\vee 1)\circ\psi$. As $h$ is the restriction of $h'$ to $\overline{M}$ and $(h')^{\ast}$ 
has image $x$, we obtain $h^{\ast}(a)=x$ implying that $\gamma^{\ast}(a)=x$. The 
co-action $\psi$ is with respect to $\delta$ and the image of $\delta^{\ast}$ is $y$, 
therefore $\gamma^{\ast}(b)=y$. The homotopy commutativity of the right square 
in~(\ref{Dpo}) then implies that $(\gamma')^{\ast}(a)=x$ and $(\gamma')^{\ast}(b)=y$. 
The homotopy cofibration diagram~(\ref{Dpo}) induces a map of long exact sequences in 
cohomology which, in degree~$n$, shows that $(\gamma')^{\ast}$ is an isomorphism 
on $H^{n}$, implying that $(\gamma')^{\ast}(c)=z$. Finally, since $x\cup y=z$, where $z$ generates 
$H^{n}(M)$, and $(\gamma')^{\ast}$ is an algebra map, we must have $a\cup b=c$ in $H^{\ast}(D)$. 
Thus there is an algebra isomorphism $H^{\ast}(D)\cong H^{\ast}(S^{m}\times S^{n-m})$.  

Next, we show that $D$ is a sphere bundle over a sphere. Since $p_{1}\circ\gamma\simeq h$ 
and $h$ extends across~$i$ to 
\(\namedright{M}{h'}{S^{m}}\), 
there is a pushout map 
\(\mathfrak{h}\colon\namedright{D}{}{S^{m}}\)  
making the following diagram homotopy commute  
\[\xymatrix{ 
     \overline{M}\ar[r]^{i}\ar[d]^{\gamma} & M\ar@/^/[ddr]^{h'}\ar[d] & \\ 
     S^{m}\vee S^{n-m}\ar[r]\ar@/_/[drr]_{p_{1}} 
         & D\ar@{.>}[dr]^(0.4){\mathfrak{h}} & \\ 
     & & S^{m}. }
\] 
As $\mathfrak{h}$ extends $h'$ and $(h')^{\ast}$ has image $x$, the image of 
$\mathfrak{h}^{\ast}$ is $a$. Let $F$ be the homotopy fibre of $\mathfrak{h}$.  
As $H^{\ast}(D)\cong H^{\ast}(S^{m}\times S^{n-m})\cong\Lambda(a,b)$ and 
$\mathfrak{h}^{\ast}$ induces the inclusion of $\Lambda(a)$, the Serre spectral sequence 
implies that $H^{\ast}(F)\cong\Lambda(b)\cong H^{\ast}(S^{n-m})$. Dualizing, 
$H_{\ast}(F)\cong H_{\ast}(S^{n-m})$, so the inclusion 
\(\namedright{S^{n-m}}{}{F}\) 
of the bottom cell induces an isomorphism in homology and is therefore a homotopy 
equivalence by Whitehead's Theorem. Hence there is a homotopy fibration 
\(\nameddright{S^{n-m}}{}{D}{\mathfrak{h}}{S^{m}}\). 

Finally, by hypothesis, 
\(\namedright{M}{h'}{S^{m}}\) 
has a right homotopy inverse 
\(\namedright{S^{m}}{}{M}\). 
For skeletal reasons, the latter map factors as 
\(\nameddright{S^{m}}{}{\overline{M}}{i}{M}\), 
implying that $h=h'\circ i$ has a right homotopy inverse. 
Thus both hypotheses~(a) and~(b) of Theorem~\ref{splittingprinciple} 
are satisfied. The theorem states that the homotopy cofibration~(\ref{Mdatacofib}) splits 
and there is a homotopy equivalence $E\simeq(S^{n-1}\rtimes\Omega S^{m})\vee E'$. 
\end{proof} 

Recall that, for the homotopy cofibration 
\(\nameddright{S^{n-1}}{f}{\overline{M}}{i}{M}\), 
the attaching map $f$ for the top cell is \emph{inert} if $\Omega i$ has a right homotopy inverse. 

\begin{theorem}[Theorem~\ref{introinert} in the Introduction]  
   \label{inert} 
   Let $M$ be an $(m-1)$-connected, closed Poincar\'{e} Duality complex of dimension~$n$, 
   where $2\leq m<n$. If there is a map 
   \(\namedright{M}{}{S^{m}}\) 
   having a right homotopy inverse, then the attaching map for the top cell of $M$ is inert.  
\end{theorem}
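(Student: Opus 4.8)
The plan is to deduce Theorem~\ref{inert} directly from Proposition~\ref{Esplitting} together with the setup in diagram~(\ref{Mdata}) and the algebra recalled in Theorem~\ref{BT}. Recall that we have the homotopy fibration diagram
\[
  \diagram
       & E\rto\dto & E'\dto \\
       & \overline{M}\rto^-{i}\dto^{h} & M\dto^{h'} \\
       & S^{m}\rdouble & S^{m}
  \enddiagram
\]
in which $h$ and $h'$ both have right homotopy inverses after looping. First I would record the consequence of the right homotopy inverse for $\Omega h'$: it produces a homotopy equivalence $\Omega M\simeq\Omega S^{m}\times\Omega E'$, and similarly $\Omega\overline{M}\simeq\Omega S^{m}\times\Omega E$. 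Thus, up to these equivalences, the map $\Omega i$ is modelled by $\Omega S^{m}\times\Omega E\to\Omega S^{m}\times\Omega E'$, which is the product of the identity on $\Omega S^{m}$ with the map $\Omega g\colon\Omega E\to\Omega E'$ induced by the map $g\colon E\to E'$ in the fibration diagram. So it suffices to show that $\Omega g$ has a right homotopy inverse.

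Next I would invoke Proposition~\ref{Esplitting}, whose hypotheses match those of the present theorem: since $M$ is $(m-1)$-connected with $2\le m<n$ and there is a map $M\to S^{m}$ with a right homotopy inverse, the homotopy cofibration~(\ref{Mdatacofib}),
\(\nameddright{S^{n-1}\rtimes\Omega S^{m}}{\theta}{E}{g}{E'}\),
splits, giving a homotopy equivalence $E\simeq(S^{n-1}\rtimes\Omega S^{m})\vee E'$ under which $g$ becomes the pinch map $p_{2}\colon(S^{n-1}\rtimes\Omega S^{m})\vee E'\to E'$. The pinch map onto a wedge summand always has a right homotopy inverse, namely the inclusion $i_{2}$ of that summand, and this is preserved by applying $\Omega$. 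Hence $\Omega g$ has a right homotopy inverse, and combining with the previous paragraph, $\Omega i$ has a right homotopy inverse, i.e.\ $f$ is inert.

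I expect the only genuinely delicate point to be the first step: verifying that, under the product decompositions coming from the sections of $\Omega h$ and $\Omega h'$, the map $\Omega i$ really is (homotopic to) the product of $1_{\Omega S^{m}}$ with $\Omega g$. This is a naturality argument for the splitting $\Omega X'\simeq\Omega Z\times\Omega E'$ associated to a homotopy fibration $E'\to X'\to Z$ with a looped section, applied to the map of fibrations given by the two right-hand columns of~(\ref{Mdata}); one needs the right homotopy inverse for $\Omega h$ to be compatible with that for $\Omega h'$ via $\Omega i$, which holds because, for skeletal reasons, the chosen section $S^{m}\to M$ of $h'$ factors through $\overline{M}$ and hence serves simultaneously as a section of $h$ — this compatibility is exactly what was used at the end of the proof of Proposition~\ref{Esplitting}. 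Everything else is formal: the splitting of the cofibration is quoted from Proposition~\ref{Esplitting}, and the fact that a wedge pinch map has a section is immediate.
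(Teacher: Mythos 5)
Your proposal is correct and is essentially the paper's own argument: the paper likewise takes the section $S^{m}\to M$ of $h'$, lifts it through $i$ for skeletal reasons, takes the right homotopy inverse $t$ of $E\to E'$ supplied by Proposition~\ref{Esplitting}, and uses the $H$-map property of $\Omega i$ to show the resulting equivalence $\Omega S^{m}\times\Omega E'\simeq\Omega M$ lifts through $\Omega i$. Your phrasing of this as ``$\Omega i$ is modelled by $1\times\Omega g$ under the two product decompositions'' is just a mild repackaging of the same naturality computation, so no changes are needed.
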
 
   
\begin{proof} 
Note that the hypothesis that $2\leq m$ implies that $M$ is at least simply-connected and $m<n$ 
implies that $M$ is not a sphere. The inertness property is equivalent to showing that the map  
\(\namedright{\Omega\overline{M}}{\Omega i}{\Omega M}\) 
has a right homotopy inverse. Consider the homotopy fibration diagram
\[\diagram 
     E\rto\dto^{e} & E'\dto^{e'} \\ 
     \overline{M}\rto^-{i}\dto^{h} & M\dto^{h'} \\ 
     S^{m}\rdouble & S^{m}  
  \enddiagram\] 
where $e$ and $e'$ are names for the maps from the homotopy fibres of $h$ and $h'$ respectively. 
Since $h'$ has a right homotopy inverse 
\(s'\colon\namedright{S^{m}}{}{M}\), 
after looping in order to multiply, the composite 
\[\epsilon\colon\llnameddright{\Omega S^{m}\times\Omega E}{\Omega s'\times\Omega e'}  
      {\Omega M\times\Omega M}{\mu}{\Omega M}\] 
is a homotopy equivalence, where $\mu$ is the canonical loop multiplication.  

For degree reasons, the left homotopy inverse 
\(s'\colon\namedright{S^{m}}{}{M}\) 
for $h'$ lifts through $i$ to a map 
\(s\colon\namedright{S^{m}}{}{\overline{M}}\).  
By Proposition~\ref{Esplitting}, the map 
\(\namedright{E}{}{E'}\) 
has a right homotopy inverse 
\(t\colon\namedright{E'}{}{E}\). 
Thus $e\circ t$ is a lift of $e'$ through $i$. The lifts of $s'$ and $e'$ through $i$, together 
with $\Omega i$ being an $H$-map, imply that there is a homotopy commutative diagram 
\[\diagram
      \Omega S^{m}\times\Omega E'\rrto^-{\Omega s\times\Omega(e\circ t)}\drrto_{\Omega s'\times\Omega e'}  
          & & \Omega\overline{M}\times\Omega\overline{M}\rto^-{\mu}\dto^{\Omega i\times\Omega i} 
          & \Omega\overline{M}\dto^{\Omega i} \\ 
      & & \Omega M\times\Omega M\rto^-{\mu} & \Omega M. 
  \enddiagram\] 
The lower direction around the diagram is the definition of $\epsilon$. The upper direction 
around the diagram then shows that $\epsilon$ lifts through $\Omega i$. As $\epsilon$ is 
a homotopy equivalence, it follows that $\Omega i$ has a right homotopy inverse. 
\end{proof} 
   
\begin{example} 
\label{toricexample} 
\textit{Moment-angle manifolds I}. 
One family of examples for Theorem~\ref{inert} comes from 
toric topology. Let $K$ be a simplicial complex on $m$ vertices. For $\sigma\in K$ define 
\[(D^{2},S^{1})^{\sigma}=\prod_{i=1}^{m} Y_{i}\ \mbox{where}\ 
     Y_{i}=\left\{\begin{array}{ll} D^{2}\ \mbox{if $i\in\sigma$} \\ S^{1}\ \mbox{if $i\notin\sigma$} \end{array}\right.\]  
and define 
\[\mathcal{Z}_{K}=\bigcup_{\sigma\in K} (D^{2},S^{1})^{\sigma}.\] 
The space $\mathcal{Z}_{K}$ is called a \emph{moment-angle complex} and 
by~\cite[Proposition 4.3.5 (a)]{BP} it is always 
$2$-connected. If $K$ is the triangulation of a sphere then 
$\mathcal{Z}_{K}$ is a manifold~\cite[Theorem 4.1.4]{BP}, called a \emph{moment-angle manifold}. 
Suppose that~$K$ has a missing edge on vertices $\{i,j\}$. Let $L$ be the (full) subcomplex of $K$ 
consisting of the two vertices $\{i,j\}$. By definition,  
$\mathcal{Z}_{L}=D^{2}\times S^{1}\cup_{S^{1}\times S^{1}} S^{1}\times D^{2}\simeq S^{3}$ and, 
by~\cite{DS}, there is a retraction of $\mathcal{Z}_{L}$ 
off $\mathcal{Z}_{K}$. Thus if $K$ is the triangulation of a sphere then $\mathcal{Z}_{K}$ is 
a $2$-connected manifold and there is a map 
\(\namedright{\mathcal{Z}_{K}}{}{S^{3}}\) 
with a right homotopy inverse. Theorem~\ref{inert} therefore implies that the attaching map for 
the top cell of the manifold $\mathcal{Z}_{K}$ is inert. 
\end{example} 

\begin{example} 
\label{toricexample2} 
\textit{Moment-angle manifolds II}. 
Generalizing the previous example, suppose that $K$ is not a simplex and $\tau$ is a missing face 
of $K$ of least dimension. If $\tau=(i_{1},\ldots,i_{k})$ then $\mathcal{Z}_{K}$ is $(2k-2)$-connected. 
Let $L$ be the (full) subcomplex of $K$ on the vertices $\{i_{1},\ldots,i_{k}\}$. Then 
$L=\partial\tau$ and as in~\cite[Example 4.1.2~(4)]{BP}, 
$\mathcal{Z}_{L}\simeq S^{2k-1}$. As before, by~\cite{DS} the full subcomplex property 
implies that $\mathcal{Z}_{L}$ retracts off $\mathcal{Z}_{K}$. So if $K$ is a triangulation  
of a sphere then $\mathcal{Z}_{K}$ is a $(2k-2)$-connected manifold and there is a map 
\(\namedright{\mathcal{Z}_{K}}{}{S^{2k-1}}\) 
with a right homotopy inverse. Theorem~\ref{inert} therefore implies that the attaching map 
for the top cell of the manifold $\mathcal{Z}_{K}$ is inert. 
\end{example}

\begin{example} 
\label{Casson} 
\textit{Certain manifolds with nontrivial $\hat{A}$-genus}. 
Casson's theory of pre-fibrations~\cite{Ca} was used by Hanke, Schick and Steimle~\cite[Theorem 1.4]{HSS} 
in their study of the space of Riemannian metrics of positive sectional curvature on a closed manifold 
to construct explicit smooth fibre bundles 
\(\nameddright{F}{}{M}{h'}{S^{m}}\)  
for any $m,\ell\geq 0$, where $M$ is  a smooth closed spin manifold with non-vanishing $\hat{A}$-genus, 
$F$ is $\ell$-connected and $h'$ has a right homotopy inverse. If $\ell+1\geq m\geq 2$ then the 
long exact sequence of homotopy groups associated to the fibre bundle implies that $M$ is 
$(m-1)$-connected. Thus, as $h'$ has a right homotopy inverse, if $M$ has 
dimension~$\geq 4$ then Theorem~\ref{inert} implies that 
the attaching map for its top cell is inert. Subsequently, to prove a conjecture of Schick, a smooth 
fibre bundle 
\(\nameddright{\mathbb{H}P^{2}}{}{M}{h'}{S^{4}}\) 
was constructed by Krannich, Kupers and Randal-Williams~\cite{KKR}  
where~$M$ has dimension $12$ and $h'$ has a right homotopy inverse. Note that $M$ is $3$-connected.   
As $h'$ has a right homotopy inverse, Theorem~\ref{inert} implies that the attaching map for its top 
cell is inert. 
\end{example}

\section{A localized version of Theorem~\ref{inert} when $m$ is odd} 
\label{sec:local} 

The hypothesis in Theorem~\ref{inert} that there is a map  
\(\namedright{M}{}{S^{m}}\) 
with a right homotopy inverse may be strong. In this section and the next we show that 
in a local context this assumption is not so strong. 
As $M$ is $(m-1)$-connected, if $H_{m}(M)$ has a $\mathbb{Z}$-summand 
with generator $\overline{x}$, then the Hurewicz isomorphism implies that 
there is a map 
\(s'\colon\namedright{S^{m}}{}{M}\) 
whose Hurewicz image is $\overline{x}$. If $x\in H^{m}(M)$ is the dual of $\overline{x}$ 
then $x$ is represented by a map 
\(r\colon\namedright{M}{}{K(\mathbb{Z},m)}\). 
The composite 
\[\nameddright{S^{m}}{s'}{M}{r}{K(\mathbb{Z},m)}\]  
is homotopic to the inclusion of the bottom cell.  
   
As $m\geq 2$, the Eilenberg-Mac Lane space $K(\mathbb{Z},m)$ is not a sphere. In particular, the inclusion 
\(\namedright{S^{m}}{}{K(\mathbb{Z},m)}\)  
is not a homotopy equivalence. However, if $m$ is odd then rationally this inclusion is a 
homotopy equivalence. Aiming for something in between, let $\mathcal{P}$ be the set 
of primes $p$ such that $p\leq\frac{n-m+3}{2}$. Observe that, for a given $p$, the $p$-torsion 
homotopy group of least dimension in $\pi_{\ast}(S^{m})$ occurs in dimension $m+2p-3$. 
Therefore, if $p\notin\mathcal{P}$ then this torsion class occurs in a dimension larger than~$n$. 
Thus, if $m$ is odd, the inclusion 
\(\namedright{S^{m}}{}{K(\mathbb{Z},m)}\) 
is a homotopy equivalence in dimensions~$\leq n$ after localization away from $\mathcal{P}$. 

\begin{lemma} 
   \label{Moddlocal} 
   If $m$ is odd then, localized away from $\mathcal{P}$, there is a lift 
   \[\diagram 
         &  S^{m}\dto \\
         M\rto^-{r}\urto^{h'} & K(\mathbb{Z},m) 
      \enddiagram\] 
   for some map $h'$, and any choice of $h'$ has the property that $h'\circ s'$ is homotopic  
   to the identity map. 
\end{lemma}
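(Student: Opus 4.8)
The plan is to exploit the obstruction theory / Postnikov tower comparison between $S^m$ and $K(\mathbb{Z},m)$ in the stated range. First I would observe that for $m$ odd, the rational homotopy of $S^m$ is concentrated in degree $m$, so the inclusion $j\colon\namedright{S^m}{}{K(\mathbb{Z},m)}$ induces an isomorphism on rational homotopy groups and, by the remark preceding the lemma, on all homotopy groups in degrees $\leq n$ once we localize away from $\mathcal{P}$: indeed the lowest-dimensional $p$-torsion in $\pi_*(S^m)$ sits in degree $m+2p-3$, and $p\notin\mathcal{P}$ forces $m+2p-3>n$. Hence, localized away from $\mathcal{P}$, the map $j$ is an $n$-equivalence — $\pi_k(j)$ is an isomorphism for $k<n$ and an epimorphism (in fact iso) for $k\leq n$.

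Next I would use this to produce the lift $h'$. Since $M$ is an $n$-dimensional $CW$-complex (a Poincaré Duality complex of dimension $n$) and $j$ is an $n$-equivalence after localization away from $\mathcal{P}$, obstruction theory says that the map $r\colon\namedright{M}{}{K(\mathbb{Z},m)}$ lifts through $j$: the obstructions to lifting a map out of an $n$-complex through an $n$-equivalence lie in $H^{k+1}(M;\pi_k(\mathrm{hofib}(j)))$, and $\mathrm{hofib}(j)$ is $n$-connected after localizing away from $\mathcal{P}$, so all these groups vanish (the only potentially nonzero one would be in degree $k+1=n+1>n=\dim M$). This gives a map $h'\colon\namedright{M}{}{S^m}$, localized away from $\mathcal{P}$, with $j\circ h'\simeq r$.

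Finally I would check the asserted property of any such $h'$. By construction $r\circ s'$ is homotopic to the inclusion $j$ of the bottom cell of $K(\mathbb{Z},m)$, so $j\circ (h'\circ s')\simeq r\circ s'\simeq j$. Thus $h'\circ s'$ and the identity map on $S^m$ become homotopic after composing with $j$. Since $j$ is an $n$-equivalence (localized away from $\mathcal{P}$) and both maps have domain $S^m$ with $m\leq n$, the map $j_*\colon [S^m,S^m]\to [S^m,K(\mathbb{Z},m)]$ is injective in this range — here I use that $[S^m,-]=\pi_m(-)$ and $\pi_m(j)$ is an isomorphism. Therefore $h'\circ s'\simeq \mathrm{id}_{S^m}$, as claimed.

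The main obstacle is bookkeeping the localization carefully: one must be sure that "localized away from $\mathcal{P}$" makes $\mathrm{hofib}(j)$ exactly $n$-connected (not merely $(n-1)$-connected), so that the single potentially obstructing cohomology group lives in degree $n+1$ and is killed by the dimension bound on $M$; equivalently one needs the sharp statement that the lowest torsion class of $\pi_*(S^m)$ surviving the localization is in degree $>n$. Given that, everything else is a standard application of obstruction theory and the fact that an $n$-equivalence induces an isomorphism on $\pi_m$ for $m\leq n$; the identity-check is then immediate from the bottom-cell property of $r\circ s'$.
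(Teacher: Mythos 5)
Your proof is correct and takes essentially the same route as the paper: the remark preceding the lemma makes the inclusion \(\namedright{S^{m}}{}{K(\mathbb{Z},m)}\) an equivalence through dimension $n$ after localizing away from $\mathcal{P}$, the lift exists because $M$ is $n$-dimensional (the paper leaves the obstruction-theoretic justification implicit where you spell it out), and $h'\circ s'$ is the identity because it agrees with the identity after composing with a map that is an isomorphism on $\pi_{m}$, which is the paper's degree-one argument in different words. The only slip is cosmetic: with the homotopy fibre $n$-connected the first candidate obstruction group is $H^{n+2}(M;\pi_{n+1})$ rather than one in degree $n+1$, but either degree exceeds $\dim M=n$, so the conclusion is unaffected.
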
 

\begin{proof} 
Since the inclusion 
\(\namedright{S^{m}}{}{K(\mathbb{Z},m)}\) 
is a homotopy equivalence in dimensions~$\leq n$ after localization away from $\mathcal{P}$, 
and $M$ has dimension $n$, the existence of the a lift $h'$ for $r$ follows immediately. 
Since $r\circ s'$ is homotopic to the inclusion of the bottom cell, $h'\circ s'$ is a degree $1$ 
self-map of $S^{m}$ and so is homotopic to the identity map. 
\end{proof} 

\begin{remark} 
If $m$ is even then $\pi_{\ast}(S^{m})$ has another $\mathbb{Z}$-summand in dimension $2m-1$, 
which results in a qualitatively different case that will be partially dealt with in the next section. 
\end{remark} 

The existence of a map 
\(\namedright{M}{}{S^{m}}\) 
with a right homotopy inverse in Lemma~\ref{Moddlocal} allows for Proposition~\ref{Esplitting} 
and Theorem~\ref{inert} to be applied in a local setting. 

\begin{theorem} 
   \label{Moddinert} 
   Let $M$ be an $(m-1)$-connected, closed Poincar\'{e} Duality complex of dimension~$n$, 
   where $2\leq m<n$. If $H_{m}(M)$ has a $\mathbb{Z}$-summand and $m$ is odd then, 
   localized away from $\mathcal{P}$, there is a homotopy equivalence 
   $E\simeq (S^{n-1}\rtimes\Omega S^{m})\vee E'$ and 
   the attaching map for the top cell of $M$ is inert.~$\qqed$ 
\end{theorem}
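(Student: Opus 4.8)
The plan is to reduce Theorem~\ref{Moddinert} to the two results already in hand: Proposition~\ref{Esplitting} and Theorem~\ref{inert}, after first arranging the hypotheses of those results via localization. The starting observation is that $M$ being $(m-1)$-connected with $H_m(M)$ containing a $\mathbb Z$-summand gives, by the Hurewicz theorem as in the paragraph preceding Lemma~\ref{Moddlocal}, a map $s'\colon\namedright{S^m}{}{M}$ and a cohomology class $x\in H^m(M)$ dual to its Hurewicz image, represented by $r\colon\namedright{M}{}{K(\mathbb Z,m)}$ with $r\circ s'$ the inclusion of the bottom cell. Since $m$ is odd, Lemma~\ref{Moddlocal} then produces, after localizing away from $\mathcal P$, a lift $h'\colon\namedright{M}{}{S^m}$ of $r$ with $h'\circ s'$ homotopic to the identity. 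Thus, localized away from $\mathcal P$, the space $M$ satisfies the running hypothesis of Section~\ref{sec:inert}: it is an $(m-1)$-connected closed Poincar\'e Duality complex (Poincar\'e Duality is a homological condition, hence preserved by localization) of dimension $n$ with $2\leq m<n$ admitting a map to $S^m$ with a right homotopy inverse.

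With that hypothesis in place, I would simply invoke Proposition~\ref{Esplitting} in the localized category: it gives the splitting of the homotopy cofibration~(\ref{Mdatacofib}), and hence the homotopy equivalence $E\simeq (S^{n-1}\rtimes\Omega S^m)\vee E'$, localized away from $\mathcal P$. Then I would run the argument of Theorem~\ref{inert} verbatim in the localized setting: the right homotopy inverse $s'$ for $h'$ lifts for skeletal reasons through $i$ to $s\colon\namedright{S^m}{}{\overline M}$, the right homotopy inverse $t\colon\namedright{E'}{}{E}$ coming from Proposition~\ref{Esplitting} shows $e\circ t$ lifts $e'$ through $i$, and then the homotopy-commutative diagram built from $\mu$, $\Omega i$ being an $H$-map, and these two lifts exhibits the equivalence $\epsilon=\mu\circ(\Omega s'\times\Omega e')$ as factoring through $\Omega i$, so $\Omega i$ has a (localized) right homotopy inverse. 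This is exactly the assertion that the attaching map for the top cell of $M$ is inert away from $\mathcal P$.

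The only genuinely new ingredient relative to the integral case is checking that all the machinery of Sections~\ref{sec:BT}--\ref{sec:inert} applies after localizing away from a set of primes, and this is routine: localization preserves homotopy (co)fibrations, half-smashes, co-$H$-structures, co-actions, and Whitehead's theorem (homology with coefficients in $\mathbb Z[\mathcal P^{-1}]$), so Theorem~\ref{BT}, Lemma~\ref{fibwedge}, Proposition~\ref{co-action}, Theorem~\ref{splittingprinciple}, Proposition~\ref{Esplitting} and Theorem~\ref{inert} all have verbatim $\mathcal P^{-1}$-local analogues. The one point deserving a word is that the connecting map $\delta$ and co-action $\psi$ on $\overline M$ used in Proposition~\ref{Esplitting} are constructed from the cell structure of $\overline M$, which is unchanged by localization, and that $x\cup y=z$ still holds after localization since it is a statement about the cup product on the localized cohomology ring — these are the facts that made hypothesis~(b) of Theorem~\ref{splittingprinciple} (the fibration $\namedright{S^{n-m}}{}{D}\to S^m$) go through integrally and they are insensitive to inverting primes. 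Hence the proof is little more than the sentence: apply Lemma~\ref{Moddlocal}, then Proposition~\ref{Esplitting}, then Theorem~\ref{inert}, all $\mathcal P^{-1}$-locally. I expect the main (very minor) obstacle is simply bookkeeping the localization uniformly through the chain of cited results rather than any substantive new argument.

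\begin{proof}
Localize all spaces and maps away from $\mathcal P$ throughout. By hypothesis $M$ is $(m-1)$-connected with $H_m(M)$ containing a $\mathbb Z$-summand, so the discussion preceding Lemma~\ref{Moddlocal} provides a map $s'\colon\namedright{S^m}{}{M}$ and a class $x\in H^m(M)$ dual to its Hurewicz image, represented by $r\colon\namedright{M}{}{K(\mathbb Z,m)}$, with $r\circ s'$ homotopic to the inclusion of the bottom cell. Since $m$ is odd, Lemma~\ref{Moddlocal} yields, after localizing away from $\mathcal P$, a lift $h'\colon\namedright{M}{}{S^m}$ of $r$ such that $h'\circ s'$ is homotopic to the identity map of $S^m$; in particular $s'$ is a right homotopy inverse for $h'$.

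Poincar\'e Duality is a condition on the homology of $M$, hence preserved under localization, and $2\leq m<n$ by assumption. Thus, localized away from $\mathcal P$, the complex $M$ satisfies the hypotheses of Proposition~\ref{Esplitting}: it is an $(m-1)$-connected closed Poincar\'e Duality complex of dimension $n$ with $2\leq m<n$ admitting a map $\namedright{M}{h'}{S^m}$ with a right homotopy inverse. All of the constructions feeding into Proposition~\ref{Esplitting} --- Theorem~\ref{BT}, Lemma~\ref{fibwedge}, Proposition~\ref{co-action}, Theorem~\ref{splittingprinciple}, together with the half-smash and co-action formalism of Sections~\ref{sec:BT}--\ref{sec:splitting} --- go through verbatim in the $\mathcal P^{-1}$-local category, since localization preserves homotopy fibrations and cofibrations, half-smashes, co-$H$-structures and homotopy co-actions, and since Whitehead's theorem holds with $\mathbb Z[\mathcal P^{-1}]$ coefficients. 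The auxiliary co-action $\psi$ and map $\delta$ on $\overline M$ are built from the unchanged cell structure of $\overline M$, and the identity $x\cup y=z$ used to verify hypothesis~(b) of Theorem~\ref{splittingprinciple} persists in the localized cohomology ring. Therefore the homotopy cofibration~(\ref{Mdatacofib}) splits away from $\mathcal P$ and there is a homotopy equivalence $E\simeq(S^{n-1}\rtimes\Omega S^m)\vee E'$ localized away from $\mathcal P$.

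Finally, run the argument of Theorem~\ref{inert} in the $\mathcal P^{-1}$-local category. The right homotopy inverse $s'$ for $h'$ lifts for skeletal reasons through $i$ to a map $s\colon\namedright{S^m}{}{\overline M}$. By the splitting just obtained, the map $\namedright{E}{}{E'}$ has a right homotopy inverse $t\colon\namedright{E'}{}{E}$, so $e\circ t$ is a lift of $e'$ through $i$, where $e$ and $e'$ denote the maps from the homotopy fibres of $h=h'\circ i$ and $h'$. Since $h'$ has a right homotopy inverse, the composite $\epsilon=\mu\circ(\Omega s'\times\Omega e')\colon\namedright{\Omega S^m\times\Omega E'}{}{\Omega M}$ is a homotopy equivalence; and the lifts $s$ of $s'$ and $e\circ t$ of $e'$ through $i$, together with $\Omega i$ being an $H$-map, give a homotopy-commutative diagram exhibiting $\epsilon$ as factoring through $\Omega i$. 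As $\epsilon$ is a homotopy equivalence, $\Omega i$ has a right homotopy inverse localized away from $\mathcal P$, which is precisely the statement that the attaching map for the top cell of $M$ is inert in this local sense.
\end{proof}
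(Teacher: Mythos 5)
Your proposal is correct and matches the paper's intended argument: the paper itself proves Theorem~\ref{Moddinert} exactly by citing Lemma~\ref{Moddlocal} to obtain the map \(\namedright{M}{h'}{S^{m}}\) with a right homotopy inverse away from $\mathcal{P}$ and then applying Proposition~\ref{Esplitting} and Theorem~\ref{inert} in the local setting. Your extra remarks verifying that the supporting machinery (Theorem~\ref{BT}, the co-action formalism, Whitehead's theorem with $\mathbb{Z}[\mathcal{P}^{-1}]$ coefficients, the cup product relation $x\cup y=z$) localizes are just the routine bookkeeping the paper leaves implicit.
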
 

%

\begin{example}\textit{$2$-connected Poincar\'{e} Duality complexes}. 
Let $M$ be a $2$-connected, closed, $n$-dimensional Poincar\'{e} Duality complex. 
Suppose that $H_{3}(M)$ has a $\mathbb{Z}$-summand. The Hurewicz homomorphism implies 
that there is a map 
\(\namedright{S^{3}}{}{M}\) 
whose Hurewicz image is the given $\mathbb{Z}$-summand in $H_{3}(M)$. If this map 
has a left homotopy inverse  
\(\namedright{M}{}{S^{3}}\)  
then Theorem~\ref{inert} applies to show that the attaching map for the top cell of $M$ is inert. 
Otherwise, if $\mathcal{P}$ is the set of primes $p$ such that $p\leq\frac{n}{2}$ then, 
after localizing away from $\mathcal{P}$, Theorem~\ref{Moddinert} implies that the 
attaching map for the top cell of $M$ is inert. For example, if $M$ is a $2$-connected $8$ or $9$-manifold 
then this holds after inverting~$2$ and $3$ and if $M$ is a $2$-connected $10$-manifold then 
this holds after inverting $2$, $3$ and $5$. 
\end{example}

\section{A localized version of Theorem~\ref{inert} when $m$ is even} 
\label{sec:localeven} 

This begins as in Section~\ref{sec:local}. As $M$ is $(m-1)$-connected, 
if $H_{m}(M)$ has a $\mathbb{Z}$-summand with generator $\overline{x}$ then the 
Hurewicz isomorphism implies that there is a map 
\(s'\colon\namedright{S^{m}}{}{M}\) 
whose Hurewicz image is $\overline{x}$. If $x\in H^{m}(M)$ is the dual of $\overline{x}$ 
then $x$ is represented by a map 
\(r\colon\namedright{M}{}{K(\mathbb{Z},m)}\). 
The composite 
\[\nameddright{S^{m}}{s'}{M}{r}{K(\mathbb{Z},m)}\]  
is homotopic to the inclusion of the bottom cell. If $m$ is even, say $m=2m'$, then the inclusion 
\(\namedright{S^{m}}{}{K(\mathbb{Z},m)}\) 
is not a rational homotopy equivalence since $S^{m}$ has two nontrivial rational homotopy 
groups while $K(\mathbb{Z},m)$ only has one. In fact, there is a rational homotopy equivalence 
$K(\mathbb{Z},m)\simeq\Omega S^{2m'+1}$. So a direct analogue of the lift in Lemma~\ref{Moddlocal} 
will not hold. In Lemma~\ref{Mevenlocal} a variation will be established assuming that $x^{2}=0$. 

To do so requires some preliminary lemmas. Let 
\[H\colon\namedright{\Omega S^{2m'+1}}{}{\Omega S^{4m'+1}}\]  
be the second James-Hopf invariant. 

\begin{lemma} 
   \label{EHPQ} 
   There is a rational homotopy fibration 
   \[\nameddright{S^{2m'}}{E}{\Omega S^{2m'+1}}{H}{\Omega S^{4m'+1}}\] 
   where $E$ is the suspension map (equivalently, the inclusion of the bottom cell). 
\end{lemma}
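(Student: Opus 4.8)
The plan is to exhibit the James--Hopf invariant as modelling the rational Hopf fibration, using the classical EHP-type data for spheres. Recall first the classical James fibration: for any $m'$ there is a homotopy fibration
\[\nameddright{S^{2m'}}{E}{\Omega S^{2m'+1}}{H}{\Omega S^{4m'+1}}\]
\emph{after localization at $2$}, where $E$ is the suspension. Rationally, however, the situation is cleaner: one knows $H^{\ast}(\Omega S^{2m'+1};\mathbb{Q})$ is a polynomial algebra on a single generator $u$ of degree $2m'$, and $H^{\ast}(\Omega S^{4m'+1};\mathbb{Q})$ is polynomial on a generator $v$ of degree $4m'$. First I would check that the second James--Hopf invariant $H$ induces, on rational cohomology, the map sending $v\mapsto u^{2}$ (up to a nonzero scalar). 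This is the defining property of the second James--Hopf map at the level of the homology coalgebra: the James splitting $\Sigma\Omega\Sigma X\simeq\bigvee_{k\geq 1}\Sigma X^{\wedge k}$ realizes $H$ as the projection onto the $k=2$ summand, so on homology it is dual to the operation dual to multiplication, i.e. it detects $u^{2}$.

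Next I would compute the rational homotopy fibre $F$ of $H$ via the Serre spectral sequence for $\nameddright{F}{}{\Omega S^{2m'+1}}{H}{\Omega S^{4m'+1}}$, working with $\mathbb{Q}$-coefficients. Since $H^{\ast}(\Omega S^{2m'+1};\mathbb{Q})=\mathbb{Q}[u]$ is free over $H^{\ast}(\Omega S^{4m'+1};\mathbb{Q})=\mathbb{Q}[v]$ via $v\mapsto u^{2}$ (a basis being $\{1,u\}$), the spectral sequence collapses and gives $H^{\ast}(F;\mathbb{Q})\cong\mathbb{Q}\{1,u\}$, i.e. the rational cohomology of $S^{2m'}$. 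Then the composite $\namedright{S^{2m'}}{E}{\Omega S^{2m'+1}}$, followed by restriction to the fibre, must hit the generator in degree $2m'$ (since $E$ is the inclusion of the bottom cell and $H\circ E$ is null rationally for degree reasons — there is nothing in $\pi_{2m'}(\Omega S^{4m'+1})\otimes\mathbb{Q}=\pi_{2m'+1}(S^{4m'+1})\otimes\mathbb{Q}=0$), so $E$ lifts to a map $\namedright{S^{2m'}}{}{F}$ which is a rational homology isomorphism, hence a rational homotopy equivalence by the rational Whitehead theorem (both spaces being simply-connected for $m'\geq 1$, and the degenerate case handled directly). Combining these identifications yields the asserted rational homotopy fibration with $E$ the suspension map.

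I expect the main obstacle to be pinning down precisely that $H^{\ast}(H;\mathbb{Q})$ sends $v$ to (a nonzero multiple of) $u^{2}$, rather than to $0$; this is the one place where one genuinely needs the definition of the James--Hopf invariant and not just formal dimension-counting. The cleanest route is to invoke the James splitting of $\Sigma\Omega\Sigma S^{2m'}$ and the fact that $H$ is, up to homotopy, the composite of the canonical map $\Omega S^{2m'+1}\to\Omega\Sigma(\bigvee_{k\geq1}S^{2m'k})\to\Omega S^{4m'+1}$ picking out the $k=2$ wedge summand; on rational homology of the loop spaces this realizes the diagonal-then-project operation, which is exactly dual to $u\mapsto u^{2}$. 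Alternatively, one can cite the standard fact (e.g. from the theory of the EHP sequence rationalized) that $H$ is rationally the map $\Omega S^{2m'+1}\to\Omega S^{4m'+1}$ whose adjoint $S^{2m'+1}\to S^{2m'+1}$... — but the loop-space homology computation is the most self-contained. Once that map is known, everything else is a routine Serre spectral sequence argument plus the rational Whitehead theorem.
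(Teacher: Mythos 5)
Your proposal is correct and follows essentially the same route as the paper: identify that the second James--Hopf invariant carries the degree-$4m'$ generator to (a nonzero multiple of) the square of the degree-$2m'$ generator in rational (co)homology of the loop spaces, deduce that the homotopy fibre has the rational cohomology of $S^{2m'}$, and conclude via the (rational) Whitehead theorem that the bottom-cell inclusion, i.e.\ the suspension $E$, is a rational equivalence onto the fibre. The paper does the first step integrally with divided power algebras (getting $H^{\ast}(y)=2x^{2}$ and rescaling rationally) rather than quoting the James splitting, but this is a cosmetic difference, and your extra remark justifying the lift of $E$ through the fibre via $\pi_{2m'+1}(S^{4m'+1})\otimes\mathbb{Q}=0$ only makes explicit a point the paper leaves implicit.
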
  

\begin{proof} 
Recall that  in integral homology, there are algebra isomorphisms 
$\hlgy{\Omega S^{2m'+1}}\cong\mathbb{Z}[\overline{x}]$, 
$\hlgy{\Omega S^{4m'+1}}\cong\mathbb{Z}[\overline{y}]$  
where $\vert\overline{x}\vert=2m'$ and $\vert\overline{y}\vert=4m'$, and 
$H_{\ast}(\overline{x}^{2})=\overline{y}$. Dually, in integral cohomology there are algebra isomorphisms 
$\cohlgy{\Omega S^{2m'+1}}\cong\Gamma(x)$ and $\cohlgy{\Omega S^{4m'+1}}\cong\Gamma(y)$ 
where $x$ and $y$ are the duals of $\overline{x}$ and $\overline{y}$ respectively, $\Gamma(\ \ )$ 
is the divided power algebra, and $H^{\ast}(y)=\gamma_{2}=2x^{2}$. Rationally, 
$\Gamma(x)\cong\mathbb{Q}[x]$ and the generator of $\Gamma(y)\cong\mathbb{Q}[y]$ 
may be adjusted to $y'=\frac{1}{2}y$ in order to obtain $H^{\ast}(y')=x^{2}$. As $H^{\ast}$ 
is an algebra map, the cohomology of the homotopy fibre $F$ of $H$ must be isomorphic 
to $\mathbb{Q}[x]/(x^{2})\cong\Lambda(x)$. Thus the inclusion 
\(\namedright{S^{2m'}}{}{F}\) 
of the bottom cell induces an isomorphism in rational cohomology, and hence in rational homology 
by the universal coefficient theorem, and so is a homotopy equivalence by Whitehead's Theorem. 
\end{proof} 

\begin{remark} 
\label{Hsquare} 
It will be useful to note that in the proof of Lemma~\ref{EHPQ} it was shown that the image 
of $H^{\ast}$ is twice the squaring map in degree $4m'$ integral cohomology. 
\end{remark} 

Let $u\in H^{2m'}(K(\mathbb{Z},2m'))\cong\mathbb{Z}$ be a generator. Then $u^{2}\neq 0$. Let 
\[S\colon\namedright{K(\mathbb{Z},2m')}{}{K(\mathbb{Z},4m')}\] 
represent $u^{2}$. We think of $S$ as a squaring map. There is a compatibility with the second 
James-Hopf invariant $H$. 

\begin{lemma} 
   \label{EHPsquare} 
   Localized away from $2$ there is a homotopy commutative square  
   \[\diagram 
         \Omega S^{2m'+1}\rto^-{a}\dto^{H} & K(\mathbb{Z},2m')\dto^{S} \\  
         \Omega S^{4m'+1}\rto^-{b/2} & K(\mathbb{Z},4m')  
      \enddiagram\] 
   where $a$ represents a generator of $H^{2m'}(\Omega S^{2m'+1})\cong\mathbb{Z}$ and $b$ represents 
   the $+1$ generator of $H^{4m'}(\Omega S^{4m'+1})\cong\mathbb{Z}$. 
\end{lemma}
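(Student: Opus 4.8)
The plan is to build the homotopy commutative square by comparing both composites into $K(\mathbb{Z},4m')$ on rational cohomology, and then upgrade a rational statement to an integral (away from $2$) statement using the fact that $\Omega S^{4m'+1}$ has free homotopy and homology in the relevant range. First I would set up the two candidate maps: the map $a\colon\Omega S^{2m'+1}\to K(\mathbb{Z},2m')$ representing a generator of $H^{2m'}(\Omega S^{2m'+1})\cong\mathbb{Z}$ (recall $\cohlgy{\Omega S^{2m'+1}}\cong\Gamma(x)$, so a generator in degree $2m'$ is canonical up to sign), and the second James--Hopf invariant $H\colon\Omega S^{2m'+1}\to\Omega S^{4m'+1}$. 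Note $H^{4m'}(\Omega S^{4m'+1})\cong\mathbb{Z}$ is generated by the dual $y$ of the bottom-cell class; by Remark~\ref{Hsquare}, $H^{*}(y)=2x^{2}$, so after inverting $2$ the composite $b/2\colon\Omega S^{4m'+1}\to K(\mathbb{Z},4m')$ (representing $\tfrac12$ the $+1$ generator) is well-defined as a $p$-local map for each odd $p$, and its pullback of the fundamental class of $K(\mathbb{Z},4m')$ hits $x^{2}$ in $H^{4m'}(\Omega S^{2m'+1})$.

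Next I would compute $H^{*}$ of the two composites $\Omega S^{2m'+1}\to K(\mathbb{Z},4m')$ applied to the degree-$4m'$ fundamental class $u^{2}\in H^{4m'}(K(\mathbb{Z},4m'))$. Going right-then-down through $S$: since $S$ represents $u^{2}$ and $a^{*}(u)=x$, we get $(S\circ a)^{*}(u^{2}) = (a^{*}u)^{2} = x^{2}$. Going down-then-right through $H$: by the normalisation of $b/2$ just described, $((b/2)\circ H)^{*}(u^{2}) = H^{*}(\tfrac12 y) = \tfrac12\cdot 2x^{2} = x^{2}$. So the two maps $\Omega S^{2m'+1}\to K(\mathbb{Z},4m')$ agree on $H^{4m'}(-;\mathbb{Z}_{(p)})$ for each odd prime $p$. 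The target $K(\mathbb{Z},4m')$ is an Eilenberg--Mac Lane space, so homotopy classes of maps into it from any space $W$ are classified by $H^{4m'}(W;\mathbb{Z})$ — hence agreeing on $H^{4m'}$ forces the two maps to be homotopic. Working $p$-locally for each odd $p$ and assembling (or simply working throughout in the category of spaces localised away from $2$, where this classification still holds with $\mathbb{Z}_{(p)}$- or $\mathbb{Z}[\tfrac12]$-coefficients), the square commutes up to homotopy after inverting $2$.

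The main obstacle, and the point that needs the most care, is the bookkeeping around the factor of $2$: $H^{*}(y) = 2x^{2}$ integrally, so the "honest" second James--Hopf invariant does \emph{not} fit in a commuting square with an integral squaring map $S$, and one genuinely must invert $2$ to divide, which is why the bottom map is written $b/2$ rather than $b$. I would make explicit that $b$ is normalised as the \emph{$+1$} generator of $H^{4m'}(\Omega S^{4m'+1})\cong\mathbb{Z}$ (so there is no sign ambiguity left to absorb the factor), and that $a$ is chosen so $a^{*}(u)=x$ with $x$ the specific divided-power generator used in Lemma~\ref{EHPQ} and Remark~\ref{Hsquare} — otherwise a stray sign could appear. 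A secondary, minor point is justifying that the Eilenberg--Mac Lane classification of maps into $K(\mathbb{Z},4m')$ is still valid after localisation away from $2$; this is standard ($K(\mathbb{Z},4m')$ localised away from $2$ is $K(\mathbb{Z}[\tfrac12],4m')$ and represents $H^{4m'}(-;\mathbb{Z}[\tfrac12])$), but I would state it explicitly since the whole argument rests on it.
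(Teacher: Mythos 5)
Your proposal is correct and follows essentially the same route as the paper: both compare the two composites on degree-$4m'$ cohomology, using $(S\circ a)^{*}=x^{2}$ and $H^{*}$ of the generator being $2x^{2}$ (Remark~\ref{Hsquare}), and then conclude via the classification of maps into the Eilenberg--Mac Lane space $K(\mathbb{Z},4m')$ after inverting $2$. Your extra remarks on normalising $a$, $b$ and on the localized representability of cohomology simply make explicit what the paper leaves implicit.
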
 

\begin{proof} 
As maps into Eilenberg-Mac Lane spaces represent cohomology classes, it suffices to show 
that the diagram commutes in cohomology. In integral cohomology, the definitions of $a$ and $S$ 
imply that $S\circ a$ represents~$x^{2}$, while by Remark~\ref{Hsquare}, $b\circ H$ represents $2x^{2}$. 
Thus, localized away from $2$, $S\circ a\simeq (b/2)\circ H$. 
\end{proof} 

Let $\mathcal{Q}$ be the set of primes $p$ such that $p\leq\frac{n-m+4}{2}$. Then 
if $p\notin\mathcal{Q}$, the $p$-torsion homotopy group of least dimension in $\pi_{\ast}(S^{m})$  
occurs in a dimension larger than $n+1$. Consequently, localized away from $\mathcal{Q}$, 
any $CW$-complex $X$ is homotopy equivalent to its rationalization in dimensions~$\leq n+1$. 
Note that the set $\mathcal{Q}$ differs from the set $\mathcal{P}$ in Lemma~\ref{Moddlocal} 
if and only if $n+1$ is prime. 

\begin{lemma} 
   \label{Mevenlocal}  
   Suppose that $m$ is even and 
   \(\namedright{M}{r}{K(\mathbb{Z},m)}\)  
   represents a cohomology class $x\in H^{m}(M)$ with the property that $x^{2}=0$. Then, 
   localized away from $\mathcal{Q}$, there is a lift  
   \[\diagram 
          & S^{m}\dto \\ 
          M\rto^-{r}\urto^{h'}  & K(\mathbb{Z},m) 
      \enddiagram\] 
   for some map $h'$, and any choice of $h'$ has the property that $h'\circ s'$ is homotopic 
   to the identity map. 
\end{lemma}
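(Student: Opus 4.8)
The plan is to mimic the proof of Lemma~\ref{Moddlocal}, but to detour through $\Omega S^{2m'+1}$ (equivalently, through $K(\mathbb{Z},2m')\simeq_{\mathbb{Q}}\Omega S^{2m'+1}$) since in the even case the inclusion $\namedright{S^{m}}{}{K(\mathbb{Z},m)}$ is not a rational equivalence. First I would use the rational homotopy fibration $\nameddright{S^{2m'}}{E}{\Omega S^{2m'+1}}{H}{\Omega S^{4m'+1}}$ of Lemma~\ref{EHPQ}, transported to the setting of Eilenberg--Mac~Lane spaces via the rational equivalences $K(\mathbb{Z},2m')\simeq_{\mathbb{Q}}\Omega S^{2m'+1}$ and $K(\mathbb{Z},4m')\simeq_{\mathbb{Q}}\Omega S^{4m'+1}$ and the compatibility of $H$ with the squaring map $S$ from Lemma~\ref{EHPsquare}. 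The upshot should be that, localized away from $\mathcal{Q}$ and in dimensions $\leq n+1$, the fibre of $\namedright{K(\mathbb{Z},2m')}{S}{K(\mathbb{Z},4m')}$ is $S^{m}$ via the inclusion of the bottom cell; I would state this as an auxiliary claim and justify it by the Serre spectral sequence / Whitehead's theorem exactly as in the proof of Lemma~\ref{EHPQ}, using that $\mathcal{Q}$ is chosen so that everything is rational through dimension $n+1$.

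Next I would bring in the hypothesis $x^{2}=0$. The class $x$ is represented by $\namedright{M}{r}{K(\mathbb{Z},m)}$, so $x^{2}$ is represented by $\nameddright{M}{r}{K(\mathbb{Z},m)}{S}{K(\mathbb{Z},4m')}$. Since $x^{2}=0$, this composite is null homotopic, so $r$ lifts through the fibre of $S$. By the auxiliary claim, localized away from $\mathcal{Q}$ that fibre agrees with $S^{m}$ (via the bottom-cell inclusion $\namedright{S^{m}}{}{K(\mathbb{Z},m)}$) in dimensions $\leq n+1$; since $M$ has dimension $n$, the lift $\namedright{M}{h'}{S^{m}}$ of $r$ exists. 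This is the construction of $h'$.

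Finally, for the right-inverse property: $r\circ s'$ is homotopic to the inclusion of the bottom cell of $K(\mathbb{Z},m)$, which factors (uniquely up to homotopy, and compatibly with the lift) through $S^{m}$ via the bottom-cell inclusion; hence $h'\circ s'$ is a self-map of $S^{m}$ inducing a degree $1$ map on $H_{m}$, so by Whitehead's theorem it is homotopic to the identity. This last step is formally the same as in Lemma~\ref{Moddlocal}; the only care needed is to track that the identification of the fibre of $S$ with $S^{m}$ is \emph{via} the bottom-cell inclusion, so that composing the lift with the fibre inclusion really does recover $r$ on the nose and $s'$ really does land, after lifting, on a degree $1$ self-map.

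I expect the main obstacle to be the auxiliary claim identifying, localized away from $\mathcal{Q}$ and through dimension $n+1$, the homotopy fibre of the squaring map $\namedright{K(\mathbb{Z},2m')}{S}{K(\mathbb{Z},4m')}$ with $S^{m}$ compatibly with bottom cells. The delicate points are: (i) matching the integral normalizations --- Remark~\ref{Hsquare} records that $H^{\ast}$ hits \emph{twice} the squaring class, which is exactly why the factor of $2$ appears in Lemma~\ref{EHPsquare} and why one must invert $2$ (absorbed into $\mathcal{Q}$); (ii) upgrading the rational fibration of Lemma~\ref{EHPQ} to a statement valid away from $\mathcal{Q}$ in a range, which is where the definition of $\mathcal{Q}$ (first $p$-torsion in $\pi_{\ast}(S^{m})$ above dimension $n+1$) does the work; and (iii) making sure the rational equivalences $K(\mathbb{Z},2m')\simeq_{\mathbb{Q}}\Omega S^{2m'+1}$ are chosen so that $a$ and the suspension $E$ correspond, so that the square of Lemma~\ref{EHPsquare} genuinely translates the fibration of Lemma~\ref{EHPQ} into a fibration of Eilenberg--Mac~Lane spaces in the relevant range. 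Once these bookkeeping issues are settled the lift and the degree~$1$ argument are routine.
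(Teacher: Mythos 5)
Your proposal is correct and follows essentially the same route as the paper's proof: use $x^{2}=0$ to null-homotope the composite $S\circ r$, identify the homotopy fibre of the squaring map $S$ with $S^{m}$ (via the bottom cell) in the relevant range away from $\mathcal{Q}$ by combining Lemmas~\ref{EHPQ} and~\ref{EHPsquare} with the fact that localization away from $\mathcal{Q}$ agrees with rationalization through dimension $n+1$, lift $r$ to $h'$, and finish with the degree~$1$ argument for $h'\circ s'$. The delicate points you flag (the factor of $2$, the range of the rational equivalences, compatibility of bottom cells) are exactly the ones the paper's proof addresses.
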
 

\begin{proof} 
Write $m=2m'$ and consider the composite 
\(\nameddright{M}{r}{K(\mathbb{Z},2m')}{S}{K(\mathbb{Z},4m')}\). 
Since $r$ represents the cohomology class $x$, by definition of $S$, the composite 
$S\circ r$ represents $x^{2}$. By hypothesis, $x^{2}=0$, so $S\circ r$ is null homotopic. 

Since we are localized away from $\mathcal{Q}$, any space $X$ is homotopy equivalent 
to its rationalization in dimensions~$\leq n+1$. Therefore the maps 
\(\namedright{\Omega S^{2m'+1}}{a}{K(\mathbb{Z},2m')}\)  
and 
\(\namedright{\Omega S^{4m'+1}}{b}{K(\mathbb{Z},4m')}\) 
in Lemma~\ref{EHPsquare} are homotopy equivalences in dimensions~$\leq n+1$ and 
Lemma~\ref{EHPQ} implies that the homotopy fibre of $H$ is $S^{2m'}$ in dimensions~$\leq n$. 
Thus the homotopy commutative square in Lemma~\ref{EHPsquare} implies that in 
dimensions~$\leq n$ the homotopy fibre of $S$ is the same as the homotopy fibre 
of $H$, which is $S^{2m'}$. The null homotopy for $S\circ r$ therefore results in a lift 
\(h'\colon\namedright{M}{}{S^{2m'}}\) 
of $r$ through the inclusion 
\(\namedright{S^{2m'}}{}{K(\mathbb{Z},2m')}\) 
of the bottom cell. Since $r\circ s'$ is homotopic to the inclusion of the bottom cell, $h'\circ  s'$ 
is a degree $1$ self-map of $S^{2m'}$ and so is homotopic to the identity map. 
\end{proof} 

The existence of a map 
\(\namedright{M}{}{S^{m}}\) 
with a right homotopy inverse in Lemma~\ref{Mevenlocal} allows for Proposition~\ref{Esplitting} 
and Theorem~\ref{inert} to be applied localized away from $\mathcal{Q}$ and gives an 
analogue to Theorem~\ref{Moddinert}, provided $x^{2}=0$.   

\begin{theorem}
   \label{Meveninert} 
   Let $M$ be an $(m-1)$-connected, closed Poincar\'{e} Duality complex of dimension~$n$, 
   where $2\leq m<n$. Suppose that $H^{m}(M)$ has 
   a $\mathbb{Z}$-summand and $m$ is even. If the $\mathbb{Z}$-summand is generated by 
   $x\in H^{m}(M)$ and $x^{2}=0$, then localized away from $\mathcal{Q}$ 
   there is a homotopy equivalence $E\simeq (S^{n-1}\rtimes\Omega S^{m})\vee E'$ and 
   the attaching map for the top cell of $M$ is inert.~$\qqed$ 
\end{theorem}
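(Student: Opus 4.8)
The plan is to reduce Theorem~\ref{Meveninert} to Theorem~\ref{inert} (equivalently Theorem~\ref{introinert}) via a localization argument, mirroring exactly the structure of the odd case in Section~\ref{sec:local}. First I would recall the setup from the start of Section~\ref{sec:localeven}: since $M$ is $(m-1)$-connected and $H^{m}(M)$ contains a $\mathbb{Z}$-summand generated by $x$, Poincar\'{e} Duality and the universal coefficient theorem give that $H_{m}(M)$ has a $\mathbb{Z}$-summand with dual generator $\overline{x}$, and the Hurewicz theorem produces a map \(s'\colon\namedright{S^{m}}{}{M}\) with Hurewicz image $\overline{x}$, together with a classifying map \(r\colon\namedright{M}{}{K(\mathbb{Z},m)}\) for $x$ such that $r\circ s'$ is the inclusion of the bottom cell.

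Second, I would invoke Lemma~\ref{Mevenlocal}: since $m$ is even and $x^{2}=0$, after localizing away from $\mathcal{Q}$ there is a lift \(h'\colon\namedright{M}{}{S^{m}}\) of $r$ through the bottom-cell inclusion \(\namedright{S^{m}}{}{K(\mathbb{Z},m)}\), and any such $h'$ satisfies $h'\circ s'\simeq\mathrm{id}_{S^{m}}$. Thus $s'$ is a right homotopy inverse for $h'$, so $h'$ is a map \(\namedright{M}{}{S^{m}}\) with a right homotopy inverse --- this is precisely the hypothesis needed for Theorem~\ref{inert}, now in the category of spaces localized away from $\mathcal{Q}$. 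Since the proofs of Proposition~\ref{Esplitting} and Theorem~\ref{inert} use only homotopy-theoretic constructions (homotopy pushouts and pullbacks, Whitehead's theorem, the splitting principle of Theorem~\ref{splittingprinciple}) that are valid in any localization, applying them to the localized complex $M$ yields both the homotopy equivalence $E\simeq(S^{n-1}\rtimes\Omega S^{m})\vee E'$ and the inertness of the attaching map for the top cell of $M$, all localized away from $\mathcal{Q}$. Since this is simply a matter of assembling already-established results, the theorem follows, which explains the $\qqed$ following its statement.

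The only real content requiring care --- and hence the point I would check most carefully --- is that everything goes through \emph{away from} $\mathcal{Q}$ consistently: one must confirm that $\mathcal{Q}$ was chosen large enough that (i) $K(\mathbb{Z},m)$ agrees with its rationalization through dimension $n+1$, so the fibre of the squaring map $S$ is identified with $S^{m}$ through dimension $n$ (Lemma~\ref{Mevenlocal}), and (ii) the resulting $h'$, and the chain of arguments in Proposition~\ref{Esplitting} invoking Theorem~\ref{splittingprinciple}, all live over the same localized sphere $S^{m}$. In particular the class $y\in H^{n-m}(M)$ dual to $x$ under Poincar\'{e} Duality, and the co-action $\psi$ built from the skeletal structure of $\overline{M}$, must be taken in the localized category; this is automatic since homology localization is exact and skeleta localize. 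Since no new obstruction arises beyond the bookkeeping of the prime set $\mathcal{Q}$, the main "obstacle" is really just verifying that the cited lemmas and Theorem~\ref{inert} have been stated in sufficient generality to be quoted verbatim in the $\mathcal{Q}$-local setting, which they have.
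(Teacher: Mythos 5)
Your proposal is correct and follows essentially the same route as the paper: the paragraph preceding the theorem states exactly that Lemma~\ref{Mevenlocal} supplies the $\mathcal{Q}$-local map \(\namedright{M}{}{S^{m}}\) with right homotopy inverse, after which Proposition~\ref{Esplitting} and Theorem~\ref{inert} are applied verbatim in the localized category, which is why the statement carries only a $\qqed$. Your added remarks about the choice of $\mathcal{Q}$ and the localization of the skeletal co-action are accurate bookkeeping and consistent with the paper's implicit argument.
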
 

\begin{remark} 
\label{m=2case} 
In the case when $m=2$ there is an improvement. In the Hopf fibration 
\(\nameddright{S^{1}}{}{S^{3}}{}{S^{2}}\)  
the left map is a group homomorphism (induced by the inclusion of $\mathbb{C}$ 
into $\mathbb{H}$) and therefore classifies, resulting in a homotopy fibration 
\(\nameddright{S^{2}}{}{\mathbb{C}P^{\infty}}{}{BS^{3}}\). 
Using this fibration in place of Lemmas~\ref{EHPQ} and~\ref{EHPsquare}, 
then instead of inverting sufficiently many primes to have $\Omega S^{3}$ approximate 
$K(\mathbb{Z},2)$ we only need to invert sufficiently many primes to have $BS^{3}$  
approximate $K(\mathbb{Z},4)$. More precisely, consider the map 
\(\namedright{BS^{3}}{a}{K(\mathbb{Z},4)}\)  
representing the $+1$ generator of $H^{4}(BS^{3})\cong\mathbb{Z}$. As the least nontrivial $p$-torsion 
homotopy group of $BS^{3}$ occurs in dimension $2p+1$, the map $a$ is a $p$-local 
homotopy equivalence in dimensions~$\leq 2p$ Thus if $\mathcal{Q}'$ is the set of  
primes~$p$ such that $p<\frac{n-1}{2}$ then, after localizing away from $\mathcal{Q}'$,  
$a$ is a homotopy equivalence, implying that if $x\in H^{2}(M)$ and $x^{2}=0$ then the map 
\(\namedright{M}{r}{\mathbb{C}P^{\infty}}\) 
representing $x$ lifts to $S^{2}$. Using this in place of Lemma~\ref{EHPQ} allows for 
the statement of Theorem~\ref{Meveninert} to hold with respect to $\mathcal{Q}'$. 
\end{remark} 

\begin{example}\textit{Certain simply-connected $6$ and $8$-manifolds}. 
Let $M$ be a simply-connected closed $n$-dimensional Poincar\'{e} Duality 
complex. Suppose that $H^{2}(M)$ has a $\mathbb{Z}$-summand generated by a class $x$ 
satisfying $x^{2}=0$. Dualizing, $H_{2}(M)$ has a $\mathbb{Z}$-summand generated by a dual 
class~$\overline{x}$ and the Hurewicz homomorphism implies that there is a map 
\(\namedright{S^{2}}{}{M}\) 
whose Hurewicz image is~$\overline{x}$. If this map has a left homotopy inverse 
\(\namedright{M}{}{S^{2}}\)  
then Theorem~\ref{inert} applies to show that the attaching map for the top cell of $M$ is inert. 
Otherwise, if $\mathcal{Q}'$ is the set of primes $p$ such that $p<\frac{n-1}{2}$ then, after localizing 
away from $\mathcal{P}$, the modification of Theorem~\ref{Meveninert} in Remark~\ref{m=2case}  
implies that the attaching map for the top cell of~$M$ is inert. For example, if $M$ is a 
simply-connected $6$-manifold with a generator $x\in H^{2}(M)$ satisfying $x^{2}=0$ then 
after inverting $2$ the attaching map for the top cell of $M$ is inert; if $M$ is a simply-connected 
$8$-manifold with a generator $x\in H^{2}(M)$ satisfying $x^{2}=0$ then after inverting $2$ and $3$ 
the attaching map for the top cell of $M$ is inert. 
\end{example}

\section{Inert attaching maps II}  
\label{sec:torus} 

In this section we prove an inertness result for simply-connected Poincar\'{e} Duality complexes 
having $\mathbb{Z}$-summands in degree~$2$ cohomology. This depends on the fact 
that $S^{1}$ has a classifying space. The main statement is Theorem~\ref{circleinert}, which will 
then be iterated to give Theorem~\ref{torusinert}. First, some background information is needed. 
Let $BS^{1}$ be the classifying space of $S^{1}$. 

\begin{lemma} 
   \label{NPD} 
   Let $M$ be a simply-connected closed Poincar\'{e} Duality complex of dimension~$n$. 
   Suppose that there is a homotopy fibration 
   \[\nameddright{N}{}{M}{h}{BS^{1}}\] 
   where $\Omega h$ has a right homotopy inverse. Then $N$ is a simply-connected closed 
   Poincar\'{e} Duality complex of dimension $n+1$. 
\end{lemma}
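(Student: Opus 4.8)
The plan is to use the homotopy fibration $\nameddright{N}{}{M}{h}{BS^{1}}$ to analyze both the homotopy type and the cohomology of $N$. First I would address simple-connectivity: since $BS^{1}=\cpinf$ has $\pi_{1}=0$ and $\pi_{2}=\mathbb{Z}$, the long exact sequence of homotopy groups for the fibration reads $\pi_{2}(M)\to\pi_{2}(BS^{1})\to\pi_{1}(N)\to\pi_{1}(M)=0$, so $\pi_{1}(N)$ is a quotient of $\pi_{1}$ of the fibre of $\namedright{M}{h}{BS^{1}}$. The hypothesis that $\Omega h$ has a right homotopy inverse forces the connecting map $\Omega BS^{1}\to N$ to be null homotopic (as in the argument identifying $a$ in Lemma~\ref{fibwedge}), so the fibration $\namedright{N}{}{M}$ followed by $h$ behaves homologically like a product; more concretely, $\Omega h$ having a section gives a homotopy equivalence $\Omega M\simeq\Omega BS^{1}\times\Omega N\simeq S^{1}\times\Omega N$, which pins down $\pi_{1}(N)\cong\pi_{2}(N)$ appropriately and in particular shows $N$ is simply-connected once one checks $\pi_{1}(N)=0$ directly from the $\pi_{2}(M)\twoheadrightarrow\pi_{2}(BS^{1})$ surjection (which holds because $h$ admits a section on $\pi_{2}$, as $\Omega h$ does after looping).

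Next I would establish that $N$ is a finite complex of the right dimension. Since $BS^{1}$ is an infinite complex this needs care, but the section after looping means $M$ is, up to homotopy, built from $N$ and $S^{1}$ in a controlled way: the fibration $\namedright{S^{1}}{}{?}$ — more precisely, the principal nature of circle bundles — is not automatic here, but one can argue that the homotopy fibre inclusion $\namedright{N}{}{M}$ together with a lift of the bottom cell of $BS^{1}$ yields a map $\namedright{N\times S^{1}}{}{M}$ that is a homology isomorphism, hence $N\times S^{1}$ is homotopy equivalent to $M$ after suitable finiteness is arranged. Then $N$ is a finite complex with $\hlgy{M}\cong\hlgy{N}\otimes\hlgy{S^{1}}$, so the top nonvanishing homology of $N$ is in degree $n-1$, and $N$ has dimension $n-1$. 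Wait — this gives dimension $n-1$, not $n+1$, so the fibre $S^{1}$ cannot be split off this way; instead the correct picture is that $M\to BS^{1}$ exhibits $M$ as the base, and $N$ is an $S^{1}$-bundle \emph{over} $M$ only in a twisted sense. Re-examining: the homotopy fibre $N$ of $\namedright{M}{}{BS^{1}}$ sits in $\nameddright{N}{}{M}{}{BS^{1}}$, and looping $BS^{1}\simeq S^{1}$ so the fibre of $\namedright{N}{}{M}$ is $\Omega BS^{1}\simeq S^{1}$; thus there is a homotopy fibration $\nameddright{S^{1}}{}{N}{}{M}$, i.e. $N$ is the total space of a circle bundle over $M$. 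That bundle has Euler class the image of the fundamental class of $BS^{1}$, namely the class in $H^{2}(M)$ classified by $h$. Hence $\dim N=n+1$.

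With this corrected picture the proof structure is: (i) from $\nameddright{S^{1}}{}{N}{}{M}$ and simple-connectivity of $M$, deduce $\pi_{1}(N)=0$ using that the Euler class is a primitive element (it has a cohomology section because $\Omega h$ does, forcing the relevant map on $\pi_{2}$ to be surjective) so the connecting map $\pi_{2}(M)\to\pi_{1}(S^{1})=\pi_{1}(\text{fibre})$ is onto; (ii) use the Gysin sequence of the circle bundle to compute $\hlgy{N}$ from $\hlgy{M}$ and the Euler class, showing $N$ is a finite complex with $H_{i}(N)=0$ for $i>n+1$ and $H_{n+1}(N)\cong\mathbb{Z}$; (iii) verify Poincaré Duality for $N$ — the cleanest route is to observe that the total space of an oriented circle bundle over an oriented closed PD complex is again an oriented closed PD complex (its fundamental class is the transgression/pullback-and-cap of the base's, or one invokes that $N$ is homotopy equivalent to a closed smooth/topological manifold when $M$ is, but since $M$ is only assumed a PD complex one argues directly that cap product with the fundamental class of $N$ is an isomorphism, using the Gysin sequence comparison together with PD for $M$ and the five lemma). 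The main obstacle is step (iii): proving PD for $N$ without manifold structure, purely homologically. I expect this to follow from a diagram chase comparing the Gysin sequence of the bundle with its Poincaré-dual version and applying the five lemma, with PD for $M$ as input; the hypothesis on $\Omega h$ ensures no torsion obstructions interfere and that all the relevant sequences split compatibly.
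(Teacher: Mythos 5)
Your corrected picture is essentially the paper's skeleton: extend the fibration one step to the left to get the principal homotopy fibration \(\nameddright{S^{1}}{}{N}{}{M}\), read off that the top nonvanishing (co)homology of $N$ is a $\mathbb{Z}$ in degree $n+1$ (you use the Gysin sequence, the paper uses the Serre spectral sequence class $z\otimes\iota$ -- equivalent), and get $\pi_{1}(N)=0$ from the splitting of $\Omega h$ (you use surjectivity of $\pi_{2}(M)\to\pi_{2}(BS^{1})$ in the long exact sequence, the paper uses the null homotopy of \(\namedright{S^{1}}{}{N}\) forcing $\pi_{\ast}(N)\to\pi_{\ast}(M)$ to be injective -- the same mechanism). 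The false starts in your first two paragraphs (the claim ``$\pi_{1}(N)\cong\pi_{2}(N)$'', and the attempt to split $M\simeq N\times S^{1}$ giving dimension $n-1$) are self-corrected, so they do not damage the final outline.

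The genuine gap is at the step you yourself flag as the main obstacle: actually proving that $N$ is a Poincar\'{e} Duality complex. You only state that you ``expect'' a five-lemma comparison of the Gysin sequence with a dualized version to work, and you attribute a role to the hypothesis on $\Omega h$ (``no torsion obstructions interfere'') that it does not have -- that hypothesis is needed only for simple-connectivity, not for duality. The paper settles this step by citing the theorem of Quinn and Gottlieb \cite{Q,Go}: for a fibration of finite complexes whose fibre and base are Poincar\'{e} Duality complexes, the total space is one too; applied to \(\nameddright{S^{1}}{}{N}{}{M}\) this finishes the proof, and it also quietly covers the finiteness/closedness of $N$, which your sketch never addresses (one needs $N$ to be homotopy equivalent to a finite complex; for simply-connected $N$ this does follow from the finitely generated, bounded homology you computed, but it has to be said). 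A direct Gysin-plus-five-lemma argument for an oriented circle fibration over a PD complex can indeed be carried out, but as written your step (iii) is a conjecture rather than a proof, and it is precisely the nontrivial content of the lemma.
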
 

\begin{proof} 
It was claimed by Quinn~\cite{Q}, with an explicit argument given by Gottlieb~\cite{Go}, that if 
\(\nameddright{F}{}{E}{}{B}\) 
is a homotopy fibration of finite $CW$-complexes and both $B$ and $F$ are 
Poincar\'{e} Duality complexes then so is $E$. In our case, extending the homotopy fibration 
\(\nameddright{N}{}{M}{h}{BS^{1}}\) 
one step to the left gives a principle fibration 
\(\nameddright{S^{1}}{}{N}{}{M}\). 
As both $M$ and $S^{1}$ are Poincar\'{e} Duality complexes, the Quinn-Gottlieb result 
implies that $N$ is too. 

Consider the cohomology Serre spectral sequence applied to the homotopy fibration 
\(\nameddright{S^{1}}{}{N}{}{M}\). 
If $z\in H^{n}(M)\cong\mathbb{Z}$ and $\iota\in H^{1}(S^{1})\cong\mathbb{Z}$ are generators 
then $z\otimes\iota$ is on the $E_{2}$-page of the spectral sequence and survives the spectral 
sequence since it is neither the source nor target of a nontrivial differential for degree reasons. 
This results in a $\mathbb{Z}$ class in $H^{n+1}(N)$ of highest dimension, implying that $N$ 
is $(n+1)$-dimensional.  

The fact that $\Omega h$ has a right homotopy inverse implies that the map 
\(\namedright{S^{1}}{}{N}\) 
is null homotopic, implying in turn that the map 
\(\namedright{N}{}{M}\) 
induces an injection on homotopy groups. In particular, as $M$ is simply-connected, so is $N$. 
\end{proof} 

Now suppose that $M$ is a simply-connected closed Poincar\'{e} Duality complex 
of dimension~$n$ and there is a homotopy fibration 
\(\nameddright{N}{}{M}{h}{BS^{1}}\) 
where $\Omega h$ has a right homotopy inverse. By Lemma~\ref{NPD}, $N$ is a simply-connected 
closed Poincar\'{e} Duality complex of dimension~$n+1$. Recall that $\overline{M}$ 
is the $(n-1)$-skeleton of $M$ and 
\(\namedright{\overline{M}}{i}{M}\) 
is the skeletal inclusion. Define the space $Q$ and the maps $a$, $b$ and $c$ by the homotopy 
fibration diagram  
\begin{equation} 
  \label{Qdef} 
  \diagram 
     S^{1}\rto\ddouble & Q\rto^-{a}\dto^{b} & \overline{M}\rto^-{h\circ i}\dto^{i} & BS^{1}\ddouble \\ 
     S^{1}\rto & N\rto^-{c} & M\rto^-{h} &  BS^{1}.  
  \enddiagram 
\end{equation}  
The space $Q$ will be a useful intermediate space so some of its properties are recorded. 

\begin{lemma} 
   \label{MMbarcompatsplit} 
   The map 
   \(\llnamedright{\Omega\overline{M}}{\Omega h\circ\Omega i}{S^{1}}\) 
   has a right homotopy inverse and there is a homotopy commutative diagram 
   \[\diagram 
          S^{1}\times\Omega Q\rto^-{\overline{e}}\dto^{1\times\Omega b} 
                & \Omega\overline{M}\dto^{\Omega i} \\ 
          S^{1}\times\Omega N\rto^-{e} & \Omega M 
     \enddiagram\] 
   where $e$ and $\overline{e}$ are homotopy equivalences. 
\end{lemma}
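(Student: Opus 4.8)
The plan is to exploit the fact that the homotopy fibration
\(\nameddright{N}{}{M}{h}{BS^{1}}\)
restricts over the skeleton \(\overline{M}\). First I would observe that the left column of~(\ref{Qdef}) is a principal \(S^{1}\)-fibration obtained by pulling back
\(\nameddright{S^{1}}{}{ES^{1}}{}{BS^{1}}\)
along \(h\circ i\); equivalently, extending
\(\nameddright{Q}{a}{\overline{M}}{h\circ i}{BS^{1}}\)
one step to the left gives the principal fibration
\(\nameddright{S^{1}}{}{Q}{a}{\overline{M}}\).
The hypothesis is that \(\Omega h\) has a right homotopy inverse
\(s\colon\namedright{S^{1}}{}{\Omega M}\);
since \(S^{1}\) is a sphere of dimension \(1<n-1\) and \(\Omega M\) is built so that its bottom cells agree with those of \(\Omega\overline{M}\) through a range well above dimension \(1\), the map \(s\) lifts through \(\Omega i\). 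More carefully: a right homotopy inverse for \(\Omega h\) is the same as a nullhomotopy of the connecting map
\(\namedright{\Omega BS^{1}=S^{1}}{}{N}\);
this nullhomotopy, being a map out of \(S^{1}\), lifts up the skeletal inclusion \(\namedright{Q}{b}{N}\) provided \(b\) is \(2\)-connected, which follows because \(\namedright{\overline{M}}{i}{M}\) is \((n-1)\)-connected and \(n\geq 4\), so the induced map of \(S^{1}\)-fibrations \(\namedright{Q}{b}{N}\) is also highly connected. Hence the connecting map
\(\namedright{S^{1}}{}{Q}{b}{N}\)
is already nullhomotopic, which is exactly the statement that
\(\Omega h\circ\Omega i\colon\llnamedright{\Omega\overline{M}}{}{S^{1}}\)
has a right homotopy inverse.

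Next I would construct the two horizontal maps. Given a right homotopy inverse
\(\overline{s}\colon\namedright{S^{1}}{}{\Omega\overline{M}}\)
for \(\Omega h\circ\Omega i\), define \(\overline{e}\) as the composite
\[\overline{e}\colon\lnameddright{S^{1}\times\Omega Q}{\overline{s}\times\Omega a}{\Omega\overline{M}\times\Omega\overline{M}}{\mu}{\Omega\overline{M}}\]
using the loop multiplication \(\mu\) on \(\Omega\overline{M}\); similarly define \(e\) using \(s\) and \(\Omega c\) and \(\mu\) on \(\Omega M\). That \(\overline{e}\) is a homotopy equivalence is the standard fact that a homotopy fibration \(\nameddright{\Omega BS^{1}}{}{\Omega\overline{M}\xrightarrow{\Omega(h\circ i)}}{}S^{1}\) with a section after looping splits the loop space as a product; here one uses that \(\Omega S^{1}\simeq\mathbb{Z}\) is discrete and \(S^{1}\simeq\Omega BS^{1}\), so the fibre sequence \(\nameddright{\Omega Q}{\Omega a}{\Omega\overline{M}}{\Omega(h\circ i)}{S^{1}}\) gives \(\Omega\overline{M}\simeq S^{1}\times\Omega Q\). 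The same argument with \(M\), \(N\) in place of \(\overline{M}\), \(Q\) shows \(e\) is a homotopy equivalence. Finally, the homotopy commutativity of the square follows by naturality: \(\Omega i\circ\overline{s}\simeq s\) by the choice of \(\overline{s}\) as a lift, \(\Omega i\circ\Omega a\simeq\Omega c\circ\Omega b\) from the commuting square \(i\circ a\simeq c\circ b\) in~(\ref{Qdef}), and \(\Omega i\) is an \(H\)-map so it commutes with \(\mu\); chasing these through the definitions of \(e\) and \(\overline{e}\) gives \(\Omega i\circ\overline{e}\simeq e\circ(1\times\Omega b)\).

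The main obstacle is the lifting step: verifying that the section
\(s\colon\namedright{S^{1}}{}{\Omega M}\)
of \(\Omega h\) can be chosen compatibly with the skeletal inclusion, i.e.\ that it lifts to
\(\overline{s}\colon\namedright{S^{1}}{}{\Omega\overline{M}}\).
This is where the connectivity bookkeeping matters: one needs the homotopy fibre of
\(\namedright{\Omega\overline{M}}{\Omega i}{\Omega M}\)
to be sufficiently connected relative to the dimension of \(S^{1}\). Since \(\namedright{\overline{M}}{i}{M}\) is the inclusion of the \((n-1)\)-skeleton of an \(n\)-dimensional complex, the homotopy fibre of \(i\) is \((n-2)\)-connected (it is \(S^{n-1}\) through a range, being the fibre of a cofibration attaching an \(n\)-cell), so the homotopy fibre of \(\Omega i\) is \((n-3)\)-connected, and since \(n\geq 4\) this fibre is at least \(1\)-connected; a map out of \(S^{1}\) into \(\Omega M\) then lifts up to \(\Omega\overline{M}\) because the obstructions lie in \(\pi_{j}\) of that fibre for \(j\leq 1\), all of which vanish. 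Equivalently, and perhaps cleaner to write, one can argue at the level of \(Q\) and \(N\): the map \(\namedright{Q}{b}{N}\) is the inclusion of the \((n-1)\)-skeleton of \(N\) (up to the dimension count, as \(N\) is \((n+1)\)-dimensional by Lemma~\ref{NPD}), hence \((n-1)\)-connected, so the nullhomotopic composite
\(\namedright{S^{1}}{}{N}\)
lifts to a nullhomotopic composite
\(\namedright{S^{1}}{}{Q}\),
and this is precisely the right homotopy inverse for \(\Omega h\circ\Omega i\) that we need.
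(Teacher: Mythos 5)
Your proposal is correct and follows essentially the same route as the paper: lift the section $s$ of $\Omega h$ through $\Omega i$ (the paper's one-line ``for skeletal reasons'', which you justify via the connectivity of the homotopy fibre of $\Omega i$, or equivalently of $b$), define $\overline{e}=\mu\circ(\overline{s}\times\Omega a)$ and $e=\mu\circ(s\times\Omega c)$ as loop-space splittings of the fibrations over $S^{1}$, and deduce the commuting square from $\Omega i\circ\overline{s}\simeq s$, the square in~(\ref{Qdef}), and $\Omega i$ being an $H$-map. The only cosmetic slip is calling $b$ the ``skeletal inclusion'' of $N$ (it merely factors through it and is $(n-1)$-connected), but the property you actually use is exactly this high connectivity, so the argument stands.
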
 

\begin{proof} 
By hypothesis, $\Omega h$ has a right homotopy inverse 
\[s\colon\namedright{S^{1}}{}{\Omega M}.\] 
For skeletal reasons, $s$ lifts through $\Omega i$ to a map 
\[\overline{s}\colon\namedright{S^{1}}{}{\Omega\overline{M}}.\]   
Then $\overline{s}$ is a right homotopy inverse for $\Omega h\circ\Omega i$ since 
$(\Omega h\circ\Omega i)\circ\overline{s}\simeq\Omega h\circ(\Omega i\circ\overline{s})=
     \Omega h\circ s$. 
     
Next, consider the diagram 
\begin{equation} 
  \label{compatS1decomp} 
  \diagram 
    S^{1}\times\Omega Q\rto^-{\overline{s}\times\Omega a}\dto^{1\times\Omega b} 
          & \Omega\overline{M}\times\Omega\overline{M}\rto^-{\mu}\dto^{\Omega i\times\Omega i} 
          & \Omega\overline{M}\dto^{\Omega i} \\ 
    S^{1}\times\Omega N\rto^-{s\times\Omega c} & \Omega M\times\Omega M\rto^-{\mu} & \Omega M  
  \enddiagram 
\end{equation} 
where $\mu$ is the loop multiplication. The left square homotopy commutes 
since $\Omega i\circ\overline{s}\simeq s$ and by the middle square in~(\ref{Qdef}). The 
right square homotopy commutes since $\Omega i$ is a loop map. The top row is a homotopy 
equivalence since it realizes a splitting of the homotopy fibration 
\(\llnameddright{\Omega Q}{\Omega a}{\Omega\overline{M}}{\Omega h\circ\Omega i}{S^{1}}\) 
and the bottom row is a homotopy equivalence since it realizes a splitting of the homotopy fibration 
\(\nameddright{\Omega N}{\Omega c}{\Omega M}{\Omega h}{S^{1}}\). 
\end{proof} 

Now consider the diagram 
\begin{equation} 
  \label{Qdata} 
  \diagram 
     &  Q\rto^-{b}\dto^{a} &  N\dto^{c} \\ 
     S^{n-1}\rto^-{f} & \overline{M}\rto^-{i}\dto^{h\circ i} & M\dto^{h} \\ 
     & BS^{1}\rdouble & BS^{1}. 
  \enddiagram 
\end{equation} 
The middle row is the homotopy cofibration that attaches the top cell to $M$ and the 
columns form a homotopy fibration diagram by~(\ref{Qdef}). Thus this is a diagram of data 
in the sense of~(\ref{data}). By Lemma~\ref{MMbarcompatsplit}, $\Omega h\circ\Omega i$ has 
a right homotopy inverse. Therefore Theorem~\ref{BT} implies that there is a homotopy cofibration 
\begin{equation} 
  \label{QNcofib} 
  \nameddright{S^{n-1}\rtimes S^{1}}{\theta}{Q}{b}{N} 
\end{equation}  
for a map $\theta$ with the property that the restriction of $\theta$ to $S^{n-1}$ is a lift of 
\(\namedright{S^{n-1}}{f}{\overline{M}}\) 
through $a$. 

We will modify the homotopy cofibration~(\ref{QNcofib}) to get better control over the space $Q$. 
By Lemma~\ref{NPD}, $N$ is an $(n+1)$-dimensional simply-connected closed Poincar\'{e} 
Duality complex. Therefore there is a homotopy cofibration 
\[\nameddright{S^{n}}{f'}{\overline{N}}{i'}{N}\] 
where $\overline{N}$ is the $n$-skeleton of $N$, $i'$ is the skeletal inclusion and $f'$ is the 
attaching map for the top cell. Define $j$ by the composite 
\[j\colon\nameddright{S^{n-1}}{i_{1}}{S^{n-1}\rtimes S^{1}}{\theta}{Q}\] 
where $i_{1}$ is the inclusion. 

\begin{lemma} 
   \label{QNbarcofib} 
   There is a homotopy cofibration 
   \(\nameddright{S^{n-1}}{j}{Q}{\overline{b}}{\overline{N}}\) 
   for a map $\overline{b}$ with the property that the composite 
   \(\nameddright{Q}{\overline{b}}{\overline{N}}{i'}{N}\) 
   is homotopic to $b$. 
\end{lemma}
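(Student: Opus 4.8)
I want to start from the homotopy cofibration~(\ref{QNcofib}), namely
\(\nameddright{S^{n-1}\rtimes S^{1}}{\theta}{Q}{b}{N}\),
and use the splitting $S^{n-1}\rtimes S^{1}\simeq S^{n-1}\vee(S^{n-1}\wedge S^{1})\simeq S^{n-1}\vee S^{n}$, which holds since $S^{n-1}$ is a suspension. Under this equivalence $j$ is exactly the restriction of $\theta$ to the $S^{n-1}$-wedge summand, while the restriction of $\theta$ to the $S^{n}$-summand is some map
\(\xi\colon\namedright{S^{n}}{}{Q}\).
The idea is that collapsing out $j$ should produce $\overline{N}$ and the residual map should be $f'$. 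Concretely, I would define $\overline{b}$ as the homotopy cofibre of $j$ and then show it has the stated properties.

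**Key steps.** First, form the homotopy cofibration
\(\nameddright{S^{n-1}}{j}{Q}{\overline{b}}{C}\),
where $C$ is defined as the cofibre. I must identify $C\simeq\overline{N}$. For this, I would compose the whole cofibre sequence~(\ref{QNcofib}) with $b$ and chase: since $\theta|_{S^{n-1}}=j$ and $b\circ\theta$ is null (it's a cofibration), the composite $b\circ j$ is null, so $b$ factors through $C$, giving
\(\namedright{C}{b'}{N}\)
with $b'\circ\overline{b}\simeq b$. I then need to see that $b'$ realizes $C$ as the $n$-skeleton of $N$, i.e. $b'$ is (homotopic to) $i'$ after identifying $C$ with $\overline{N}$. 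The cleanest route is a homology/cohomology computation: from~(\ref{QNcofib}) one reads off $H_{\ast}(Q)$ in terms of $H_{\ast}(N)$ and the two cells $S^{n-1},S^{n}$; collapsing the bottom sphere $S^{n-1}$ by $j$ removes one $\mathbb{Z}$ in degree $n-1$ and, crucially, the top-cell class in degree $n+1$ of $N$ gets "opened up" so that $C$ has its top cell in dimension $n$. Then $C$ is an $n$-dimensional complex mapping to $N$ inducing an isomorphism on homology through degree $n-1$ and an injection in degree $n$; since $\overline{N}$ is the $n$-skeleton of the Poincaré Duality complex $N$ (Lemma~\ref{NPD}), a skeletal/Whitehead argument identifies $b'$ with $i'\colon\overline{N}\to N$ up to homotopy equivalence $C\simeq\overline{N}$.

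**The main obstacle.** The delicate point is pinning down precisely how the top cell of $N$ interacts with the cofibration~(\ref{QNcofib}): a priori $\theta$ could attach the $S^{n}$-summand in a way that does not simply build the top cell of $N$, so I cannot just "collapse $S^{n-1}$" and declare the answer is $\overline{N}$. The resolution exploits that $N$ is a closed Poincaré Duality complex of dimension $n+1$ with $\overline{N}$ its $n$-skeleton: by cellular approximation and connectivity, the composite
\(\nameddright{S^{n-1}}{j}{Q}{b}{N}\)
is null (degree reasons, since $b\circ\theta\simeq\ast$), and the only $n$-dimensional complex receiving a degree-one-on-$H_{n+1}$-type map to $N$ compatible with the cell structure is $\overline{N}$. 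So the argument is: (i) build $C=\mathrm{cofib}(j)$ and the factorization $b'\colon C\to N$; (ii) compute $H_{\ast}(C)$ and check $b'$ is $(n-1)$-connected with the right behavior in degree $n$; (iii) invoke that $\overline{N}\hookrightarrow N$ is the universal such complex (or directly build a homotopy equivalence $C\xrightarrow{\simeq}\overline{N}$ using that both are obtained from $M_{n-1}=\overline{M}_{n-1}$-type skeleta and the induced maps agree), yielding $\overline{b}$; (iv) verify $i'\circ\overline{b}\simeq b$ from the factorization in (i). The bookkeeping in (ii)–(iii) — especially making the identification $C\simeq\overline{N}$ as spaces under $Q$, not merely abstractly — is where the real work lies, but it is routine skeletal homotopy theory once the setup is in place.
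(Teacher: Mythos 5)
Your route is the same as the paper's: regard $S^{n-1}\rtimes S^{1}$ as $S^{n-1}\vee S^{n}$, let $j$ be the restriction of $\theta$ to $S^{n-1}$, take its homotopy cofibre $C$, and identify $C\simeq\overline{N}$ compatibly with the maps to $N$. However, the two facts that actually carry this identification are asserted rather than proved. First, you nowhere justify that $C$ (equivalently $Q$) has the homotopy type of a CW complex of dimension at most $n$; this is not automatic, since $Q$ is only given as a homotopy pullback. The paper extracts it from the homotopy fibration $S^{1}\to Q\to\overline{M}$ together with the fact that $\overline{M}$ has dimension at most $n-1$ (in fact $n-2$). Without this bound you can neither invoke cellular approximation to factor your map $b'\colon C\to N$ through the $n$-skeleton $\overline{N}$, nor control $H_{n}(C)$, so the ``skeletal/Whitehead argument'' cannot begin.

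Second --- and this is exactly the ``main obstacle'' you flag --- you must prove that the $S^{n}$-summand really does build the top cell of $N$: concretely, that $H_{n}(C)=0$, equivalently that $j$ is nontrivial on $H_{n-1}$, equivalently that the collapse $N\to N/C\simeq S^{n+1}$ has degree $\pm 1$. Your proposed resolution presupposes this (``degree-one-on-$H_{n+1}$-type map'') rather than deriving it, and the uniqueness you then invoke fails without it: there is a homotopy cofibration $S^{2}\to S^{2}\cup_{2}e^{3}\to S^{3}$, so $S^{3}$ is a mapping cone over $S^{2}\cup_{2}e^{3}$, yet $S^{2}\cup_{2}e^{3}$ is not homotopy equivalent to the $2$-skeleton of $S^{3}$. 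The correct argument (essentially the paper's) is short but uses both missing inputs: attaching the cells of $S^{n-1}\vee S^{n}$ one at a time gives a homotopy cofibration $S^{n}\to C\to N$ (this is stronger than your bare extension of $b$ over $C$); its homology exact sequence reads $0\to H_{n+1}(N)\to\mathbb{Z}\to H_{n}(C)\to H_{n}(N)\to 0$ because $H_{n+1}(C)=0$, and since $N$ is a simply-connected closed Poincar\'{e} Duality complex of dimension $n+1$ (Lemma~\ref{NPD}) we have $H_{n+1}(N)\cong\mathbb{Z}$ and $H_{n}(N)\cong H^{1}(N)=0$, so $H_{n}(C)$ is finite cyclic; on the other hand the dimension bound makes $H_{n}(C)$ free, hence zero, and the collapse has degree one. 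Only then is your cellular-approximation lift $C\to\overline{N}$ a homology isomorphism, and Whitehead finishes as you intend. So the outline is right, but as written the dimension bound and the degree-one step are genuine gaps.
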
 

\begin{proof} 
Consider the homotopy cofibration  
\(\nameddright{S^{n-1}\rtimes S^{1}}{\theta}{Q}{b}{N}\). 
Regard $S^{n-1}\rtimes S^{1}$ as $S^{n-1}\vee S^{n}$. Attaching one cell at a time gives homotopy cofibrations 
\[\nameddright{S^{n-1}}{j}{Q}{}{Q'}\]  
\[\nameddright{S^{n}}{}{Q'}{}{N},\] 
where the first of these defines the space $Q'$ and the composite 
\(\nameddright{Q}{}{Q'}{}{N}\) 
is homotopic to $b$. Observe that the homotopy fibration 
\(\nameddright{S^{1}}{}{Q}{a}{\overline{M}}\) 
implies that $Q$ is at most $(n-1)$-dimensional since $\overline{M}$ is $(n-2)$-dimensional. Thus 
the homotopy cofibration defining $Q'$ implies that it is at most $n$-dimensional. Therefore, as $N$ 
is $(n+1)$-dimensional, the homotopy cofibration 
\(\nameddright{S^{n}}{}{Q'}{}{N}\) 
is attaching the $(n+1)$-cell to $N$. This implies that $Q'$ must account for all the cells of $N$ 
in dimensions~$\leq n$, that is, $Q'\simeq\overline{N}$. Thus, substituting this into the statements above,  
there is a homotopy cofibration 
\(\nameddright{S^{n-1}}{j}{Q}{\overline{b}}{\overline{N}}\),  
where $\overline{b}$ is the map to the cofibre, and the composite 
\(\nameddright{Q}{\overline{b}}{\overline{N}}{i'}{N}\) 
is homotopic to $b$. 
\end{proof} 

The key property of $Q$ is the following splitting, whose proof is lengthy. 

\begin{proposition} 
   \label{QNbarsplit} 
   The homotopy cofibration in Lemma~\ref{QNbarcofib} splits to give a homotopy equivalence 
   $Q\simeq S^{n-1}\vee\overline{N}$. 
\end{proposition}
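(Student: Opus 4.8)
The plan is to prove the splitting $Q\simeq S^{n-1}\vee\overline{N}$ by exhibiting a left homotopy inverse for $j$ and then ``adding'' it to $\overline{b}$ using a homotopy co-action on $Q$, in the same spirit as Step~4 in the proof of Theorem~\ref{splittingprinciple}. The first task is to locate the co-action. The homotopy fibration diagram~(\ref{Qdef}) presents $Q$ as the homotopy fibre of $h\circ i\colon\namedright{\overline{M}}{}{BS^{1}}$, where $\Omega(h\circ i)$ has a right homotopy inverse by Lemma~\ref{MMbarcompatsplit}. The space $\overline{M}$ carries the canonical homotopy co-action $\psi\colon\namedright{\overline{M}}{}{\overline{M}\vee S^{n-1}}$ coming from the homotopy cofibration $\nameddright{S^{n-1}}{f}{\overline{M}}{i}{M}$, with respect to the connecting map $\delta'\colon\namedright{\overline{M}}{}{S^{n-1}}$. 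Applying Proposition~\ref{co-action} with $X=\overline{M}$, $Z=BS^{1}$, $Y=S^{n-1}$ produces an induced co-action $\psi_{Q}\colon\namedright{Q}{}{Q\vee(S^{n-1}\rtimes\Omega BS^{1})}=Q\vee(S^{n-1}\rtimes S^{1})$ with respect to a map $\delta_{Q}\colon\namedright{Q}{}{S^{n-1}\rtimes S^{1}}$ fitting into a pullback square as in~(\ref{deltaEpb}).

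Next I would set up the analogue of the cube~(\ref{splittingcube}) and carry out Steps~1--3. The diagram~(\ref{Qdata}) is a diagram of data in the sense of~(\ref{criteriondata}) — indeed, after identifying $S^{n-1}\rtimes S^{1}\simeq S^{n-1}\vee S^{n}$ one can arrange things so that the cell $S^{n}=\Sigma S^{n-1}$ plays the role of the ``$\Sigma Y$''-summand and $BS^{1}=\Sigma\cp^{\infty\,\prime}$... — but more cleanly, I would simply run the argument of Theorem~\ref{splittingprinciple} abstractly with $X=\overline{M}$, $X'=M$, $h=h\circ i$, the co-action $\psi$, and the space $D$ built from $\gamma\circ f$ as in~(\ref{Ddef}). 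Condition~(a) holds by Lemma~\ref{MMbarcompatsplit}. For condition~(b) one needs the relevant $D$ to sit in a homotopy fibration $\nameddright{S^{n}}{g}{D}{\mathfrak h}{BS^{1}}$ (or rather the appropriately desuspended version — here the base is $BS^{1}$, which is not a suspension, so Theorem~\ref{splittingprinciple} as literally stated does not apply and the argument must be re-run by hand). The point is that $\mathfrak h\colon\namedright{D}{}{BS^{1}}$ has homotopy fibre whose homology, by a Serre spectral sequence computation using the algebra structure $H^{*}(D)\cong H^{*}(S^{n}\times BS^{1})$ (established from the cofibration defining $D$ together with the relation $x\cup y = z$ coming from Poincaré Duality on $N$, exactly as in the proof of Proposition~\ref{Esplitting}), agrees with $H_{*}(S^{n})$ in the relevant range, so the bottom-cell inclusion $\namedright{S^{n}}{}{\mathrm{hofib}(\mathfrak h)}$ is a homology isomorphism and hence an equivalence by Whitehead. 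With the cube in place, the pullback map $t$ in the analogue of~(\ref{SigmaBsplitting}) gives a section showing that the relevant map $S^{n-1}\rtimes S^{1}\to S^{n-1}\rtimes S^{1}$ (really $\alpha\circ\theta$) is injective on homology with cokernel $S^{n-1}$, hence $\alpha\circ\theta$ has a left homotopy inverse $r$, and $r\circ\alpha$ is a left homotopy inverse for $j$ (after restricting to the $S^{n-1}$-summand of $S^{n-1}\rtimes S^{1}$).

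Finally, with $\psi_{Q}$ in hand and a left homotopy inverse $r'$ for $j$, I would form the composite
\[e\colon\lnameddright{Q}{\psi_{Q}}{Q\vee(S^{n-1}\rtimes S^{1})}{\overline b\vee r'}{\overline N\vee S^{n-1}}\]
and check, using $p_{1}\circ\psi_{Q}\simeq\mathrm{id}_{Q}$ and $p_{2}\circ\psi_{Q}\simeq\delta_{Q}$ together with the identification of $r'\circ\delta_{Q}$ as a left homotopy inverse for $j$, that $e$ induces an isomorphism in homology; then Whitehead's theorem gives $Q\simeq\overline N\vee S^{n-1}$. This last step is verbatim Step~4 of Theorem~\ref{splittingprinciple}.

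I expect the main obstacle to be condition~(b): because the base of the relevant fibration is $BS^{1}$ rather than a suspension, Theorem~\ref{splittingprinciple} cannot be quoted as a black box and one must re-derive the cube and the splitting argument directly in this setting, while simultaneously verifying the fibration $\nameddright{S^{n}}{}{D}{}{BS^{1}}$. The latter requires knowing the ring structure $H^{*}(D;\mathbb Z)\cong H^{*}(S^{n}\times BS^{1};\mathbb Z)$, and pinning down the multiplicative extension (that $a\cdot b$ hits the top class $c$) is where Poincaré Duality for $N$ — supplied by Lemma~\ref{NPD} — enters, exactly as the corresponding point entered via $x\cup y=z$ in Proposition~\ref{Esplitting}. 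A secondary bookkeeping nuisance is keeping track of $S^{n-1}\rtimes S^{1}\simeq S^{n-1}\vee S^{n}$ and making sure the map $j$ (restriction of $\theta$ to the $S^{n-1}$ wedge summand) is the one receiving the left homotopy inverse, rather than the full half-smash.
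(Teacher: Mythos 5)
Your global architecture is the same as the paper's (get a homotopy co-action on $Q$ from Proposition~\ref{co-action}, produce a left homotopy inverse for $j$, then assemble the splitting exactly as in Step~4 of Theorem~\ref{splittingprinciple} and finish with Whitehead), and you are right that Theorem~\ref{splittingprinciple} cannot be quoted verbatim because $BS^{1}$ is not a suspension. But your starting point fails: a homotopy cofibration $A\to B\to C$ has its canonical co-action on the \emph{cofibre} $C$, so \(\nameddright{S^{n-1}}{f}{\overline{M}}{i}{M}\) gives a co-action \(\namedright{M}{}{M\vee S^{n}}\), not a co-action \(\namedright{\overline{M}}{}{\overline{M}\vee S^{n-1}}\); there is no connecting map \(\namedright{\overline{M}}{}{S^{n-1}}\) attached to that cofibration, and since $H^{n-1}(M)\cong 0$ forces $\overline{M}$ to be $(n-2)$-dimensional up to homotopy, \emph{every} map \(\namedright{\overline{M}}{}{S^{n-1}}\) is null homotopic, so a co-action ``with respect to'' such a map carries no information and cannot detect the top cell $j$. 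The paper's proof instead takes the co-action coming from attaching the $(n-2)$-cells of $\overline{M}$, pinched onto a single $S^{n-2}$ chosen so that $\delta^{\ast}$ has image the Poincar\'{e} dual $y$ of $x$ with $x\cup y=z$ in $H^{\ast}(M)$ (duality for $M$, not for $N$ as you assert); Proposition~\ref{co-action} then yields \(\namedright{Q}{\psi_{Q}}{Q\vee(S^{n-2}\rtimes S^{1})}\), and the candidate left inverse for $j$ is $q\circ\delta_{Q}$, where $q$ collapses $S^{n-2}\rtimes S^{1}$ onto $S^{n-2}\wedge S^{1}\simeq S^{n-1}$. The degree shift supplied by smashing with the fibre direction $S^{1}$ is exactly what bridges the gap between the $(n-2)$-dimensional $\overline{M}$ and the $(n-1)$-sphere; with your choice $Y=S^{n-1}$ no map \(\namedright{Q}{}{S^{n-1}}\) of the required kind is produced at all.

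The second gap is your surrogate for condition~(b). You propose a fibration \(\nameddright{S^{n}}{}{D}{\mathfrak{h}}{BS^{1}}\) with $H^{\ast}(D)\cong H^{\ast}(S^{n}\times BS^{1})$, but $D$ is the cofibre of a map from $S^{n-1}$ into a wedge containing $BS^{1}$, so additively its cohomology is $H^{\ast}(BS^{1})$ together with at most two extra classes; it cannot equal $H^{\ast}(S^{n}\times BS^{1})$, which has extra infinite cyclic summands in every degree $n+2k$, and in general no such fibration exists (this is precisely why the paper does not run the ``sphere bundle over the base'' argument of Proposition~\ref{Esplitting} here). What the paper does instead, and what your outline is missing, is the coefficient computation of Step~3: using Ganea's fibration for $BS^{1}\vee S^{n-2}$ and the resulting loop space decomposition, the composite $(h\vee 1)\circ\psi\circ f$ is written as $s\cdot\widetilde{\eta}+t\cdot w$, and a cup-product argument in the cofibre of this map, received from $M$ via a cofibration diagram and using $x\cup y=z$, shows $t=1$; this identifies $\delta_{Q}\circ j$ as $s'\cdot\eta+\iota$ and hence $q\circ\delta_{Q}\circ j\simeq\iota$. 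Without some argument of this kind your claim that ``$\alpha\circ\theta$ is injective in homology with cokernel $S^{n-1}$'' is unsupported. Your final assembly via $\psi_{Q}$, $\overline{b}$ and Whitehead's theorem is fine and is verbatim the paper's Step~4, but as written the proof does not go through until the co-action is taken on the correct sphere and the coefficient $t=1$ is actually established.
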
 

\begin{proof} 
We proceed in steps. 
\medskip 

\noindent 
\textit{Step 1: Setting up}. 
Consider the map 
\(\namedright{M}{h}{BS^{1}}\). 
Since $BS^{1}\simeq K(\mathbb{Z},2)$, $h$ represents a class $x\in H^{2}(M)$. Since $\Omega h$ 
has a right homotopy inverse, $x$ must be the generator of a $\mathbb{Z}$-summand. 
By Poincar\'{e} Duality, there is a class $y\in H^{n-2}(M)$ such that $x\cup y=z$, where $z\in H^{n}(M)$ 
is the fundamental class. Note that $y$ generates a $\mathbb{Z}$-summand in $H^{n-2}(M)$. 
For dimensional reasons, since~$\overline{M}$ is the $(n-1)$-skeleton of $M$, both $x$ and $y$ 
correspond to classes in $H^{\ast}(\overline{M})$; the same names will be used for the corresponding 
generators. Since $M$ is simply-connected, $H_{1}(M)\cong 0$ so Poincar\'{e} Duality implies 
that $H^{n-1}(M)\cong 0$. Therefore $\overline{M}$ is actually $(n-2)$-dimensional. The attaching 
map for the $(n-2)$-cells in $\overline{M}$ has a connecting map 
\(\namedright{\overline{M}}{\delta'}{\bigvee_{i=1}^{d} S^{n-2}}\) 
that collapses out the $(n-3)$-skeleton, and this connecting map has a homotopy co-action 
\(\namedright{\overline{M}}{\psi'}{\overline{M}\vee(\bigvee_{i=1}^{d} S^{n-2})}\) 
with respect to $\delta'$. Observe that $y$ is in the image of $(\delta')^{\ast}$. Changing 
$\bigvee_{i=1}^{d} S^{n-2}$ by a self-equivalence if necessary, we may assume that the 
restriction of $(\delta')^{\ast}$ to the $i=1$ summand has image $y$. Let $\delta$ be the composite 
\[\delta\colon\nameddright{\overline{M}}{\delta'}{\bigvee_{i=1}^{d} S^{n-2}}{p_{1}}{S^{n-2}}\] 
where $p_{1}$ is the pinch map to the first wedge summand, and let $\psi$ be the composite 
\[\psi\colon\lnameddright{\overline{M}}{\psi'}{\overline{M}\vee(\bigvee_{i=1}^{d} S^{n-2})}{1\vee p_{1}} 
     {\overline{M}\vee S^{n-2}}.\] 
Then $\delta^{\ast}$ has image $y$ and $\psi$ is a homotopy co-action with respect to $\delta$. 
\medskip 

\noindent 
\textit{Step 2: A homotopy co-action for $Q$}. 
Apply Proposition~\ref{co-action} with the homotopy fibration 
\(\nameddright{E}{}{X}{h}{Z}\)  
and the homotopy co-action 
\(\namedright{X}{\psi}{X\vee Y}\)  
being 
\(\nameddright{Q}{}{\overline{M}}{h}{BS^{1}}\) 
and 
\(\namedright{\overline{M}}{\psi}{\overline{M}\vee S^{n-2}}\) 
respectively. Then there are homotopy fibration diagrams 
\[\diagram 
    Q\rto^-{\psi_{Q}}\dto^{a} & Q\vee(S^{n-2}\rtimes S^{1})\dto & & 
         Q\rto^-{\delta_{Q}}\dto^{a} & S^{n-2}\rtimes S^{1}\dto  \\ 
    \overline{M}\rto^-{\psi}\dto^{h} & \overline{M}\vee S^{n-2}\dto^{\overline{h}} 
         &  & \overline{M}\rto^-{(h\vee 1)\circ\psi}\dto^{h} & BS^{1}\vee S^{n-2}\dto \\ 
    BS^{1}\rdouble & BS^{1} & & BS^{1}\rdouble & BS^{1} 
  \enddiagram\] 
where $\psi_{Q}$ is a homotopy co-action with respect to $\delta_{Q}$. 
\medskip 

\noindent 
\textit{Step 3: A left homotopy inverse for 
\(\namedright{S^{n-1}}{j}{Q}\)}. 
Consider the diagram 
\begin{equation} 
  \label{Qid} 
  \diagram 
     S^{n-1}\rto^-{i_{1}}\drrto_{f} & S^{n-1}\rtimes S^{1}\rto^-{\theta} & Q\rto^-{\delta_{Q}}\dto^{a} 
          & S^{n-2}\rtimes S^{1}\dto \\ 
     & & \overline{M}\rto^-{(h\vee 1)\circ\psi} & BS^{1}\vee  S^{n-2}. 
  \enddiagram 
\end{equation}  
Since the restriction of $\theta$ is a lift of $f$ through $a$, the left triangle homotopy commutes. 
The right square homotopy commutes since it is the top square of the right diagram in Step~$2$. 
By definition, $j=\theta\circ i_{1}$.   We will show that the composite 
\(\nameddright{Q}{\delta_{Q}}{S^{n-2}\rtimes S^{1}}{q}{S^{n-1}}\), 
where $q$ is the quotient map to $S^{n-2}\wedge S^{1}\simeq S^{n-1}$,  
is a left homotopy inverse for $j$.  

By~\cite{Ga}, there is a homotopy fibration 
\[\nameddright{\Sigma\Omega BS^{1}\wedge\Omega S^{n-2}}{W}{BS^{1}\vee S^{n-2}}{}{BS^{1}\times S^{n-2}}\] 
where the right map is the inclusion of the wedge into the product and $W$ is the Whitehead product 
of the maps 
\(\nameddright{\Sigma\Omega BS^{1}}{ev}{BS^{1}}{i_{1}}{BS^{1}\vee  S^{n-2}}\) 
and 
\(\nameddright{\Sigma\Omega S^{n-2}}{ev}{S^{n-2}}{i_{2}}{BS^{1}\vee S^{n-2}}\), 
with $ev$ being the evaluation map and $i_{1}$ and $i_{2}$ being the inclusions of the left and right 
wedge summands respectively.  Further, this homotopy fibration splits after looping to give a homotopy equivalence 
\[\Omega(BS^{1}\vee  S^{n-2})\simeq 
     S^{1}\times\Omega S^{n-2}\times\Omega(\Sigma S^{1}\wedge\Omega S^{n-2}),\] 
where $\Omega BS^{1}$ has been simplified to $S^{1}$. Note that 
$\Sigma S^{1}\wedge\Omega S^{n-2}\simeq S^{n-1}\vee R$ 
where $R$ is $(n-1)$-connected and the restriction of $W$ to $S^{n-1}$ is the composite  
\(w\colon\namedright{S^{n-1}}{}{S^{2}\vee S^{n-2}}\hookrightarrow BS^{1}\vee S^{n-2}\), 
where the left map is the Whitehead product whose cofibre is $S^{2}\times S^{n-2}$. 

The decomposition of $\Omega(BS^{1}\vee S^{n-2})$ may be used to identify a map 
\(\namedright{S^{m}}{}{BS^{1}\vee S^{n-2}}\) 
by taking its adjoint, identifying the maps to the factors in $\Omega(BS^{1}\vee S^{n-2})$, 
and then taking adjoints to get back to the original map. Doing this for the composite 
$(h\vee 1)\circ\psi\circ f$ in~(\ref{Qid}) gives $(h\vee 1)\circ\psi\circ f\simeq s\cdot\widetilde{\eta} + t\cdot w$  
for some $s\in\mathbb{Z}/2\mathbb{Z}$ and $t\in\mathbb{Z}$, where $\widetilde{\eta}$ is the composite 
\(\nameddright{S^{n-1}}{\eta}{S^{n-2}}{i_{2}}{BS^{1}\vee S^{n-2}}\),  
with $\eta$ representing a generator of $\pi_{n-1}(S^{n-2})\cong\mathbb{Z}/2\mathbb{Z}$. 

We claim that $t=1$. Since the Whitehead product 
\(\namedright{S^{n-1}}{}{S^{2}\vee  S^{n-2}}\) 
is detected by the cup product in cohomology, so is $w$. Consider the homotopy cofibration diagram 
\[\diagram 
     S^{n-1}\rrto^-{f}\ddouble & & \overline{M}\rto^-{i}\dto^{(h\vee 1)\circ\psi} & M\dto^{\varphi} \\ 
     S^{n-1}\rrto^-{(h\vee 1)\circ\psi\circ f} & & BS^{1}\vee S^{n-2}\rto & C 
  \enddiagram\] 
that defines the space $C$ and the map $\varphi$. In cohomology, the definitions of $h$ and $\psi$ 
imply that $((h\vee 1)\circ\psi)^{\ast}$ sends the degree $2$ generator $x'\in H^{2}(BS^{1})$ to 
$x\in H^{2}(\overline{M})$ and sends the degree $2n-2$ generator $y'\in H^{2n-2}(S^{2n-2})$ to  
$y\in H^{n-2}(\overline{M})$. Since $x\cup y=z$ in $H^{\ast}(M)$, we obtain $x'\cup y'=z'$ 
in $H^{\ast}(C)$ with $\varphi^{\ast}(z')=z$. As $w$ detects the cup product, we must have $t=1$, 
implying that $(h\vee 1)\circ\psi\circ f\simeq s\cdot\widetilde{\eta} + w$. 

On the other hand, the homotopy commutativity of~(\ref{Qid}) implies that $(h\vee 1)\circ\psi\circ f$ 
is homotopic to the composite 
\(\namedddright{S^{n-1}}{i_{1}}{S^{n-1}\rtimes S^{1}}{\theta}{Q}{\delta_{Q}}{S^{n-2}\rtimes S^{1}} 
    \stackrel{\gamma}{\longrightarrow} BS^{1}\vee  S^{n-2}\), 
where $\gamma$ is simply a name for the map appearing in~(\ref{Qid}). Regard 
$S^{n-2}\rtimes S^{1}$ as $S^{n-2}\vee S^{n-1}$. Then the retriction of~$\gamma$ to $S^{n-2}$ 
is the inclusion $i_{2}$ of the second wedge summand and the restriction to $S^{n-1}$ is $w$. 
The composite $\delta_{Q}\circ\theta\circ i_{1}$ equals $s'\cdot\eta+t'\cdot\iota$, where 
$s'\in\mathbb{Z}/2\mathbb{Z}$, $\iota$ is the identity map on $S^{n-1}$, and $t'\in\mathbb{Z}$. 
Thus $\gamma\circ\delta_{Q}\circ\theta\circ i_{1}\simeq s'\cdot\widetilde{\eta}+t'\cdot w$. 
But $\gamma\circ\delta_{Q}\circ\theta\circ i_{1}\simeq (h\vee 1)\circ\psi\circ f\simeq s\cdot\widetilde{\eta}+w$, 
so it must be the case that $t'=1$. Thus, $\delta_{Q}\circ\theta\circ i_{1}\simeq s'\cdot\widetilde{\eta} +\iota$. 
Composing with the quotient map 
\(\namedright{S^{n-2}\rtimes S^{1}}{q}{S^{n-1}}\) 
then gives $q\circ\delta_{Q}\circ\theta\circ i_{1}\simeq\iota$. Hence $q\circ\delta_{Q}$ is a 
left homotopy inverse for $\theta\circ i_{1}=j$. 
\medskip 

\noindent 
\textit{Step 4: The splitting of $Q$}. By Lemma~\ref{QNbarcofib} there is a homotopy cofibration 
\(\nameddright{S^{n-1}}{j}{Q}{\overline{b}}{\overline{N}}\). 
By Step~$3$, a left homotopy inverse for $j$ is 
\(\llnamedright{Q}{q\circ\delta_{Q}}{S^{n-1}}\). 
By Step~$2$, $Q$ has a homotopy co-action 
\(\namedright{Q}{\psi_{Q}}{Q\vee (S^{2n-2}\rtimes S^{1})}\) 
with respect to $\delta_{Q}$, implying that $p_{2}\circ\psi_{Q}\simeq\delta_{Q}$. Therefore 
the composite 
\[\Psi\colon\nameddright{Q}{\psi_{Q}}{Q\vee (S^{2n-1}\rtimes S^{1})}{\overline{b}\vee q}{\overline{N}\vee S^{n-1}}\]   
has the property that $p_{1}\circ\Psi\simeq\overline{b}$ and $p_{2}\circ\Psi\simeq q\circ\delta_{Q}$ (the 
left homotopy inverse for $j$). Thus $\Psi$ splits the homotopy cofibration 
\(\nameddright{S^{n-1}}{j}{Q}{\overline{b}}{\overline{N}}\), 
implying that it induces an isomorphism in homology and so is a homotopy equivalence by 
Whitehead's Theorem. 
\end{proof} 

\begin{theorem}[Theorem~\ref{introcircleinert} in the Introduction]  
   \label{circleinert} 
   Let $M$ be a simply-connected closed Poincar\'{e} Duality complex of dimension $n$. 
   Suppose that there is a homotopy fibration 
   \[\nameddright{N}{}{M}{h}{BS^{1}}\] 
   where $\Omega h$ has a right homotopy inverse. If the attaching map for the top cell of $N$ 
   is inert then the attaching map for the top cell of $M$ is inert. 
\end{theorem}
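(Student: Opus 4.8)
The plan is to show that $\Omega i$ has a right homotopy inverse, where $\namedright{\overline{M}}{i}{M}$ attaches the top cell of $M$. The strategy is to transfer inertness from $N$ to $M$ through the homotopy fibration $\nameddright{N}{}{M}{h}{BS^{1}}$, using the intermediate space $Q$ built in~(\ref{Qdef}) and the splitting $Q\simeq S^{n-1}\vee\overline{N}$ from Proposition~\ref{QNbarsplit}. The key point is that, by Lemma~\ref{MMbarcompatsplit}, there is a homotopy commutative square
\[\diagram
     S^{1}\times\Omega Q\rto^-{\overline{e}}\dto^{1\times\Omega b}
           & \Omega\overline{M}\dto^{\Omega i} \\
     S^{1}\times\Omega N\rto^-{e} & \Omega M
  \enddiagram\]
in which $e$ and $\overline{e}$ are homotopy equivalences. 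So producing a right homotopy inverse for $\Omega i$ reduces to producing a right homotopy inverse for $1\times\Omega b$, i.e. for $\Omega b$, where $\namedright{Q}{b}{N}$.

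First I would use the hypothesis that the attaching map for the top cell of $N$ is inert: for the homotopy cofibration $\nameddright{S^{n}}{f'}{\overline{N}}{i'}{N}$, this says that $\Omega i'$ has a right homotopy inverse $\namedright{\Omega N}{\sigma}{\Omega\overline{N}}$. Next, by Lemma~\ref{QNbarcofib}, the composite $\nameddright{Q}{\overline{b}}{\overline{N}}{i'}{N}$ is homotopic to $b$, so $\Omega b\simeq\Omega i'\circ\Omega\overline{b}$. By Proposition~\ref{QNbarsplit} there is a homotopy equivalence $Q\simeq S^{n-1}\vee\overline{N}$ under which $\overline{b}$ becomes the pinch map $p_{1}$ onto the $\overline{N}$ summand (this is how $\overline{b}$ arises as the cofibre map in the split cofibration $\nameddright{S^{n-1}}{j}{Q}{\overline{b}}{\overline{N}}$). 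Hence $\overline{b}$ has a right homotopy inverse, namely the inclusion $\namedright{\overline{N}}{}{S^{n-1}\vee\overline{N}\simeq Q}$, and therefore $\Omega\overline{b}$ has a right homotopy inverse as well. Composing, $\Omega b\simeq\Omega i'\circ\Omega\overline{b}$ has a right homotopy inverse: take (right inverse of $\Omega\overline{b}$) $\circ$ $\sigma$.

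Now assemble: let $\namedright{\Omega N}{r_{b}}{\Omega Q}$ be the right homotopy inverse for $\Omega b$ just constructed, so $\Omega b\circ r_{b}\simeq\mathrm{id}$. Then $1\times r_{b}\colon\namedright{S^{1}\times\Omega N}{}{S^{1}\times\Omega Q}$ is a right homotopy inverse for $1\times\Omega b$. The commuting square above gives $\Omega i\circ\overline{e}\simeq e\circ(1\times\Omega b)$, so $\Omega i\circ(\overline{e}\circ(1\times r_{b})\circ e^{-1})\simeq e\circ(1\times\Omega b)\circ(1\times r_{b})\circ e^{-1}\simeq e\circ e^{-1}\simeq\mathrm{id}_{\Omega M}$, where $e^{-1}$ is a homotopy inverse for $e$. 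Thus $\Omega i$ has a right homotopy inverse, which is exactly the statement that the attaching map for the top cell of $M$ is inert.

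The main obstacle is not in this final assembly — which is a short diagram chase — but in the inputs it relies on, above all Proposition~\ref{QNbarsplit} (whose proof, as flagged, is lengthy) and Lemma~\ref{MMbarcompatsplit}. Granting those, the only care needed here is checking that under the equivalence $Q\simeq S^{n-1}\vee\overline{N}$ the map $\overline{b}$ is genuinely the pinch map, so that it admits the wedge-summand inclusion as a section; this is immediate from the fact that $\overline{b}$ is the cofibre map of $\namedright{S^{n-1}}{j}{Q}$ and the splitting $\Psi$ constructed in the proof of Proposition~\ref{QNbarsplit} restricts on the $\overline{N}$-summand to a section. A secondary point is to confirm that the right homotopy inverses can be chosen so that $1\times(\,\cdot\,)$ makes sense, i.e. that everything is compatible with the product decompositions of $\Omega\overline{M}$ and $\Omega M$ furnished by Lemma~\ref{MMbarcompatsplit}; this is automatic since those decompositions are precisely what the square records.
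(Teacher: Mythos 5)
Your proposal is correct and follows essentially the same route as the paper: you use Lemma~\ref{QNbarcofib} to write $b\simeq i'\circ\overline{b}$, Proposition~\ref{QNbarsplit} to get a section of $\overline{b}$, the inertness of the top cell of $N$ to get a right inverse for $\Omega i'$, and then transfer the resulting right inverse of $\Omega b$ through the commuting square of Lemma~\ref{MMbarcompatsplit} exactly as in the paper (your $\overline{e}\circ(1\times r_{b})\circ e^{-1}$ is the paper's $\overline{e}\circ(1\times(\Omega t\circ\mathfrak{s}))\circ e^{-1}$). No gaps beyond the inputs you correctly flag as the real work.
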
 

\begin{proof} 
By hypothesis, the attaching map for the top cell of $N$ is inert, which by definition of inertness, 
means that the skeletal inclusion  
\(\namedright{\overline{N}}{i'}{N}\)  
has the property that $\Omega i'$ has a right homotopy inverse 
\(\mathfrak{s}\colon\namedright{\Omega N}{}{\Omega\overline{N}}\). 
By definition, 
\(\namedright{Q}{\overline{b}}{\overline{N}}\) 
is a lift of 
\(\namedright{Q}{b}{N}\)  
through $i'$. By Proposition~\ref{QNbarsplit}, $\overline{b}$ has a right homotopy inverse 
\(t\colon\namedright{\overline{N}}{}{Q}\). 
Therefore the composite 
\[\namedddright{\Omega N}{\mathfrak{s}}{\Omega\overline{N}}{\Omega t}{\Omega Q}{\Omega b}{\Omega N}\] 
satisfies 
$\Omega b\circ\Omega t\circ\mathfrak{s}\simeq\Omega i\circ\Omega\overline{b}\circ\Omega t\circ\mathfrak{s}\simeq 
     \Omega i\circ\mathfrak{s}\simeq 1$, 
where $1$ refers to the identity map on $\Omega N$. Now consider the diagram 
\[\diagram 
      S^{1}\times\Omega M\rto^-{e^{-1}} & S^{1}\times\Omega N\rto^-{1\times(\Omega t\circ\mathfrak{s})}\drdouble 
           & S^{1}\times\Omega Q\rto^-{\overline{e}}\dto^{1\times\Omega b} & \Omega\overline{M}\dto^{\Omega i} \\ 
      & & S^{1}\times\Omega N\rto^-{e} & \Omega M  
  \enddiagram\] 
where $\overline{e}$ and $e$ are the homotopy equivalences in~(\ref{compatS1decomp}). 
We have just seen that the left triangle homotopy commutes. The right square homotopy commutes 
by~(\ref{compatS1decomp}). The homotopy commutativity of the diagram therefore implies that 
$\overline{e}\circ (1\times(\Omega t\circ\mathfrak{s}))\circ e^{-1}$ is a right homotopy inverse for $\Omega i$. 
Thus, by definition of inertness, the attaching map for the top cell of $M$ is inert. 
\end{proof} 

Theorem~\ref{circleinert} can be iterated and combined with Theorem~\ref{inert}. Let $T^{\ell}$ be the torus 
with $\ell$ circle factors and let $BT^{\ell}$ be its classifying space. 

\begin{theorem} 
   \label{torusinert} 
   Let $M$ be a simply-connected closed Poincar\'{e} Duality complex of dimension $n$. 
   Suppose that there is a homotopy fibration 
   \[\nameddright{N}{}{M}{h}{BT^{\ell}}\] 
   where $\Omega h$ has a right homotopy inverse, $N$ is an $(m-1)$-connected, closed 
   Poincar\'{e} Duality complex for $m\geq 3$ odd, and there is a map 
   \(\namedright{N}{}{S^{m}}\) 
   that has  a right homotopy inverse. Then the attaching map for the top cell of $M$ is inert. 
\end{theorem}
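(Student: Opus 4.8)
The plan is to combine Theorem~\ref{inert} with $\ell$ successive applications of Theorem~\ref{circleinert}, peeling the circle factors off $BT^{\ell}$ one at a time. First I would apply Theorem~\ref{inert} to $N$: it is an $(m-1)$-connected closed Poincar\'{e} Duality complex, $m\geq 3$ gives $2\leq m$, and by hypothesis there is a map $\namedright{N}{}{S^{m}}$ with a right homotopy inverse; the existence of that map forces $H^{m}(N)$ to contain a $\mathbb{Z}$-summand, so Poincar\'{e} Duality gives $H^{\dim N-m}(N)\neq 0$ and hence $\dim N\geq 2m>m$ (the alternative being the degenerate case $N\simeq S^{m}$, which has no genuine top cell). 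Thus the hypotheses of Theorem~\ref{inert} are met and the attaching map for the top cell of $N$ is inert.

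Next I would build a tower
\[N=P_{\ell}\longrightarrow P_{\ell-1}\longrightarrow\cdots\longrightarrow P_{1}\longrightarrow P_{0}=M\]
by repeatedly splitting off one circle. Write $BT^{\ell}\simeq BS^{1}\times BT^{\ell-1}$ and let $P_{1}$ be the homotopy fibre of the composite $\nameddright{M}{h}{BT^{\ell}}{}{BS^{1}}$ with the projection onto the first factor. Composing a right homotopy inverse for $\Omega h$ with $\Omega$ of the inclusion of the first factor $\namedright{BS^{1}}{}{BT^{\ell}}$ shows that $\Omega$ of this composite has a right homotopy inverse, so Lemma~\ref{NPD} applies and $P_{1}$ is a simply-connected closed Poincar\'{e} Duality complex (of dimension $n+1$) sitting in a homotopy fibration $\nameddright{P_{1}}{}{M}{}{BS^{1}}$. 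Pulling the fibration $\nameddright{N}{}{M}{h}{BT^{\ell}}$ back over $\namedright{\ast\times BT^{\ell-1}}{}{BS^{1}\times BT^{\ell-1}}$ identifies the homotopy fibre of the restricted map $\namedright{P_{1}}{}{BT^{\ell-1}}$ with $N$, while $\Omega$ of that restricted map has a right homotopy inverse obtained by composing a right homotopy inverse for $\Omega h$ with $\Omega$ of the inclusion of the second factor $\namedright{BT^{\ell-1}}{}{BT^{\ell}}$ and noting that the result is null homotopic as a map to $\Omega BS^{1}$, hence lifts to $\Omega P_{1}$. This reproduces the hypotheses of the theorem with $M$ replaced by $P_{1}$ and $\ell$ by $\ell-1$, so iterating produces the tower, in which each $\namedright{P_{j}}{}{P_{j-1}}$ is the homotopy fibre inclusion of a homotopy fibration $\nameddright{P_{j}}{}{P_{j-1}}{}{BS^{1}}$ whose base map loops to a map with a right homotopy inverse, each $P_{j}$ is a simply-connected closed Poincar\'{e} Duality complex, and $P_{\ell}=N$ (since at the last stage $BT^{1}=BS^{1}$).

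Finally I would descend the tower: the attaching map for the top cell of $P_{\ell}=N$ is inert by the first paragraph, and applying Theorem~\ref{circleinert} to $\nameddright{P_{j}}{}{P_{j-1}}{}{BS^{1}}$ in turn for $j=\ell,\ell-1,\dots,1$ shows successively that the attaching map for the top cell of each $P_{j-1}$ is inert, ending with $P_{0}=M$. The only step needing genuine care is the middle one: checking that splitting off a single circle factor really does inherit all three ingredients required by Theorem~\ref{circleinert} --- that the intermediate space is again a simply-connected closed Poincar\'{e} Duality complex (Lemma~\ref{NPD}), that the homotopy fibre at the next stage is still $N$ (an iterated homotopy pullback), and, above all, that $\Omega$ of the relevant map to $BS^{1}$ retains a right homotopy inverse (the null homotopy and lifting argument above). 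The outer two steps are then formal applications of Theorems~\ref{inert} and~\ref{circleinert}.
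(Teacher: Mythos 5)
Your proposal is correct, and its overall shape --- peel the circle factors of $BT^{\ell}$ off one at a time, descend the resulting tower with Theorem~\ref{circleinert}, and start the descent by applying Theorem~\ref{inert} to $N$ --- is exactly the paper's. Where you genuinely differ is in how the looped right homotopy inverse is carried down the tower. The paper does this by producing, via the Hurewicz theorem, a map $\bigvee_{i=1}^{\ell}S^{2}\to M$ whose composite with $h$ is the standard inclusion into $BT^{\ell}$, lifting the sub-wedge $\bigvee_{i=2}^{\ell}S^{2}$ into the fibre of $M\to BS^{1}$, and then invoking the Hilton--Milnor theorem together with $\Omega S^{2}\simeq S^{1}\times\Omega S^{3}$ to manufacture a right homotopy inverse for the looped map to $\prod_{i=2}^{\ell}BS^{1}$. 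You argue instead that the restriction of a right homotopy inverse $T^{\ell}\to\Omega M$ to $T^{\ell-1}$ composes trivially into $\Omega BS^{1}\simeq S^{1}$, hence lifts through $\Omega P_{1}\to\Omega M$, and the lift is a right homotopy inverse for the looped restricted map $P_{1}\to BT^{\ell-1}$; this is valid and more economical, avoiding the auxiliary wedge of spheres and Hilton--Milnor, and it also records at every stage (via the iterated homotopy pullback) that the fibre is still $N$, which the paper only notes at the end of its iteration. One further remark: your opening dichotomy ($\dim N\geq 2m$ or $N\simeq S^{m}$) addresses a hypothesis the paper uses silently, namely that Theorem~\ref{inert} requires $m<\dim N$. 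The degenerate case $N\simeq S^{m}$ genuinely has to be excluded rather than dismissed as having ``no genuine top cell'' --- the Hopf fibration presentation $S^{2k+1}\to\mathbb{C}P^{k}\to BS^{1}$ satisfies every stated hypothesis with $m=2k+1$, yet the attaching map for the top cell of $\mathbb{C}P^{k}$ is not inert --- but since the paper's own proof makes the same implicit assumption when it invokes Theorem~\ref{inert}, this is a shared caveat about the statement rather than a gap in your argument relative to the paper's.
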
 

\begin{proof} 
Since $\Omega h$ has a right homotopy inverse and $N$ is at least $2$-connected, $h^{\ast}$ induces 
an isomorphism on degree $2$ cohomology. Therefore $H^{2}(M)\cong\bigoplus_{i=1}^{\ell}\mathbb{Z}$. 
As this is a free group, the universal coefficient theorem implies that $H_{2}(M)\cong H^{2}(M)$. 
Since $M$ is simply-connected, the Hurewicz isomorphism 
implies that $\pi_{2}(M)\cong H_{2}(M)$. Therefore there is a map 
\(j\colon\namedright{\bigvee_{i=1}^{\ell} S^{2}}{}{M}\) 
such that the composite 
\(\nameddright{\bigvee_{i=1}^{\ell} S^{2}}{j}{M}{h}{BT^{\ell}}\) 
induces an isomorphism on $H^{2}$. Changing $\bigvee_{i=1}^{\ell} S^{2}$ by a self-equivalence 
if necessary, we may assume that $h\circ j$ is the composite of inclusions 
\(\bigvee_{i=1}^{\ell} S^{2}\hookrightarrow\prod_{i=1}^{\ell} S^{2}\hookrightarrow 
     \prod_{i=1}^{\ell} BS^{1}\simeq BT^{\ell}\).  
     
Let $M_{1}=M$,  $N_{1}=N$, $j_{1}=j$ and $h_{1}=h$, so there is a homotopy fibration 
\(\nameddright{N_{1}}{}{M_{1}}{h_{1}}{\prod_{i=1}^{\ell} BS^{1}}\) 
and $h_{1}\circ j_{1}$ is the inclusion 
\({\bigvee_{i=1}^{\ell} S^{2}}\hookrightarrow BT^{\ell}\). 
Let $\mathfrak{h}_{1}$ be the composite 
\(\nameddright{M_{1}}{h_{1}}{BT^{\ell}}{\pi_{1}}{BS^{1}}\) 
where $\pi_{1}$ is the projection to the first factor. Define the space $M_{2}$ by the homotopy fibration 
\[\nameddright{M_{2}}{}{M_{1}}{\mathfrak{h}_{1}}{BS^{1}}.\] 
Since $\Omega h$ has a right homotopy inverse, so does $\Omega\mathfrak{h}_{1}$. 
Therefore Theorem~\ref{circleinert} states that if the attaching map for the top cell of $M_{2}$ 
is inert then so is that for $M_{1}$. We now show that $M_{2}$ is of a form similar to $M_{1}$. Let $h_{2}$ 
be the composite 
\[h_{2}\colon\namedddright{M_{2}}{}{M_{1}}{h_{1}}{\prod_{i=1}^{\ell} BS^{1}}{\pi}{\prod_{i=2}^{\ell} BS^{1}},\]  
where $\pi$ is the projection. Since $h_{1}\circ j_{1}$ is a composite of inclusions, by definition of $M_{2}$ 
as the homotopy fibre of~$\mathfrak{h}_{1}$ there is a lift 
\[\diagram 
      \bigvee_{i=2}^{\ell} S^{2}\rto\dto^{j_{2}} & \bigvee_{i=1}^{\ell} S^{2}\dto^{j_{1}}  \\ 
      M_{2}\rto & M_{1} 
  \enddiagram\] 
for a map $j_{2}$ that induces an isomorphism on $\pi_{2}$. Notice that $h_{2}\circ j_{2}$ is the composite 
of inclusions 
\(\bigvee_{i=2}^{\ell} S^{2}\hookrightarrow\prod_{i=2}^{\ell} S^{2}\hookrightarrow 
     \prod_{i=2}^{\ell} BS^{1}\simeq BT^{\ell}\). 
Note also that this description of $h_{2}\circ j_{2}$ also implies that~$\Omega h_{2}$ has a 
right homotopy inverse via the composite 
\(\namedright{\prod_{i=2}^{\ell} S^{1}}{s_{1}}{\Omega(\bigvee_{i=2}^{\ell} S^{2})} 
     \stackrel{\Omega j_{2}\circ\Omega h_{2}}{\lllarrow}\prod_{i=2}^{\ell} S^{1}\),  
 where $s_{1}$ exists by the Hilton-Milnor Theorem and the fact that 
 $\Omega S^{2}\simeq S^{1}\times\Omega S^{3}$. Thus we may form a homotopy fibration 
 \[\nameddright{N_{2}}{}{M_{2}}{h_{2}}{\prod_{i=2}^{\ell} BS^{1}}\] 
 that defines $N_{2}$, with $\Omega h_{2}$ having a right homotopy inverse. Now the argument may be iterated. 
 
 After $\ell$ iterations, the homotopy fibre $N_{\ell}$ is homotopic to the given space $N$ that 
 is the homotopy fibre of 
 \(\namedright{M}{h}{BT^{\ell}}\). 
 Let $M_{\ell+1}=N$. We have obtained a chain of Poincar\'{e} Duality complexes  
 $M_{\ell+1},M_{\ell},\ldots,M_{1}$ 
 with the property that, for each $2\leq k\leq\ell+1$, if the attaching map for the top cell of $M_{k}$ is 
 inert then the attaching map for the top cell of $M_{k-1}$ is inert. Thus, collectively, if the attaching 
 map for the top cell of $N=M_{\ell+1}$ is inert then the attaching map for the top cell of $M=M_{1}$ is inert. By 
 hypothesis, $N$ is $(m-1)$-connected for $m\geq 3$ odd and there is a map 
 \(\namedright{N}{}{S^{m}}\) 
 that has a right homotopy inverse. Thus the attaching map for the top cell of $N$ is inert 
 by Theorem~\ref{inert}, and hence the attaching map for the top cell of $M$ is inert. 
\end{proof} 

An interesting application of Theorem~\ref{torusinert} is to quasi-toric manifolds. 

\begin{example} 
\label{quasitoric} 
A $2n$-dimensional manifold has a \emph{locally standard} $T^{n}$-action if locally it is 
the standard action of $T^{n}$ on $\mathbb{C}^{n}$. An $n$-dimensional convex polytope $P$ 
is \emph{simple} if it has exactly $n$ facets intersecting at each vertex. A \emph{quasi-toric manifold} 
over $P$ is a closed, smooth $2n$-dimensional manifold $M^{2n}$ that has a smooth locally standard 
$T^{n}$-action for which the orbit space $M^{2n}/T^{n}$ is homeomorphic to $P$. These were 
first constructed in~\cite{DJ} and they exist in abundance. Let $P$ be a simple polytope 
of dimension $n$ and having $m$ facets. Let $K$ be the simplicial complex that is Alexander 
dual to the boundary of $P$. By~\cite[Proposition 7.3.12]{BP}, a quasi-toric manifold $M^{2n}$ arises 
as a quotient $M^{2n}\cong\mathcal{Z}_{K}/T^{m-n}$ for some subtorus $T^{m-n}\subseteq T^{m}$  
that acts freely on the corresponding moment-angle complex $\mathbb{Z}_{K}$. This implies that 
there is a principle $T^{m-n}$-fibration 
\[\nameddright{T^{m-n}}{}{\mathcal{Z}_{K}}{}{M^{2n}}.\]  
As this is principle, it is classified by a map 
\(\namedright{M^{2n}}{}{BT^{m-n}}\)  
and there is a homotopy fibration 
\begin{equation} 
  \label{ZMfib} 
  \nameddright{\mathcal{Z}_{K}}{}{M^{2n}}{h}{BT^{m-n}}.  
\end{equation} 
As in Examples~\ref{toricexample} and~\ref{toricexample2}, $\mathcal{Z}_{K}$ is $(m-1)$-connected 
for $m\geq 3$ odd and if $K$ is not a simplex then there is a map 
\(\namedright{\mathcal{Z}_{K}}{}{S^{m}}\)  
with a right homotopy inverse. The connectivity of $\mathcal{Z}_{K}$ implies that~$h$ induces 
an isomorphism on $\pi_{2}$ and therefore $\Omega h$ induces an isomorphism on $\pi_{1}$. 
Using the loop space structure to multiply Hurewicz images then gives a map 
\(\namedright{\prod_{i=1}^{m-n} S^{1}}{}{\Omega M^{2n}}\) 
that is a right homotopy inverse for $\Omega h$. Thus Theorem~\ref{torusinert} applied to~(\ref{ZMfib}) 
implies that the attaching map for the top cell of $M^{2n}$ is inert. 
\end{example}

\section{The rational case} 
\label{sec:rational} 

In this section we show that our methods recover Halperin and Lemaire's result that any simply-connected 
closed Poincar\'{e} Duality complex $M$ has the attaching map for its top cell being inert provided 
the rational cohomology of $M$ is not generated by a single element. This involves a generalization 
of Theorem~\ref{circleinert}. 

Throughout this section all spaces will be rationalized. In particular, for any $m\geq 1$ this implies 
that $S^{2m-1}$ is an Eilenberg-Mac Lane space $K(\mathbb{Q},2m-1)$. Write $BS^{2m-1}$ for 
its classifying space. 

 \begin{theorem} 
   \label{EMinert} 
   Let $M$ be a $(2m-1)$-connected, closed Poincar\'{e} Duality complex of dimension $n$, 
   where $m\geq 1$ and $n\geq 4$. Suppose that $M$ is not a sphere and there is a homotopy fibration 
   \[\nameddright{N}{}{M}{h}{BS^{2m-1}}\] 
   where $\Omega h$ has a right homotopy inverse. If the attaching map for the top cell of $N$ 
   is inert then the attaching map for the top cell of $M$ is inert. 
\end{theorem}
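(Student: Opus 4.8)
The plan is to replay, after rationalization, the proof of Theorem~\ref{circleinert} together with its supporting results Lemmas~\ref{NPD}, \ref{MMbarcompatsplit}, \ref{QNbarcofib} and Proposition~\ref{QNbarsplit}, substituting $S^{2m-1}$ for $S^{1}$ and $BS^{2m-1}$ for $BS^{1}$ throughout. What makes this legitimate is that rationally $S^{2m-1}\simeq K(\mathbb{Q},2m-1)$ is a loop space, so $BS^{2m-1}\simeq K(\mathbb{Q},2m)$, $\Omega BS^{2m-1}\simeq S^{2m-1}$ and $\Sigma\Omega BS^{2m-1}\simeq S^{2m}$; consequently $S^{n-1}\rtimes S^{2m-1}\simeq S^{n-1}\vee S^{n+2m-2}$ and $S^{n-2m}\rtimes S^{2m-1}\simeq S^{n-2m}\vee S^{n-1}$, and the Ganea decomposition of $\Omega(BS^{2m-1}\vee S^{n-2m})$ behaves exactly as it did when the fibre was $S^{1}$. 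First I would record the numerics: since $M$ is $(2m-1)$-connected, Poincar\'{e} Duality forces $n\geq 4m$, gives $H^{j}(M;\mathbb{Q})=0$ for $n-2m<j<n$ so that $\overline{M}$ is $(n-2m)$-dimensional, and (once $N$ is known to be an $(n+2m-1)$-dimensional $(2m-1)$-connected Poincar\'{e} Duality complex) gives $\widetilde{H}_{\ast}(N;\mathbb{Q})=0$ in degrees $n$ through $n+2m-2$, so that $N$ has no rational cells in those degrees and its top cell is attached by a map $\namedright{S^{n+2m-2}}{f'}{\overline{N}}$, where $\overline{N}$ is the $(n-1)$-skeleton. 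The hypothesis that $M$ is not a sphere (with $n\geq 4$) keeps the homotopy co-action construction non-degenerate, as in Section~\ref{sec:inert}.

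Next I would prove the rational analogue of Lemma~\ref{NPD}: extending the given fibration one step to the left gives $\nameddright{S^{2m-1}}{}{N}{}{M}$, so the Quinn--Gottlieb theorem makes $N$ a (rational) Poincar\'{e} Duality complex, the Serre spectral sequence places its fundamental class in degree $n+2m-1$ (the class $z\otimes\iota_{2m-1}$ is a permanent cycle for degree reasons), and the section of $\Omega h$ makes the connecting map $\namedright{S^{2m-1}}{}{N}$ null homotopic, whence $\pi_{\ast}(N)\hookrightarrow\pi_{\ast}(M)$ and $N$ is $(2m-1)$-connected. Following~(\ref{Qdef}), I would then define $Q$ as the homotopy fibre of $\namedright{\overline{M}}{h\circ i}{BS^{2m-1}}$ sitting over the homotopy fibre $N$ of $h$, note $\dim Q\leq n-1$ from $\nameddright{S^{2m-1}}{}{Q}{}{\overline{M}}$ and $\dim\overline{M}=n-2m$, and establish the analogue of Lemma~\ref{MMbarcompatsplit}: the section of $\Omega h$ lifts through $\Omega i$ for skeletal reasons (its adjoint $\namedright{S^{2m}}{}{M}$ factors through $\overline{M}$ because $2m\leq n-2m$), yielding a right homotopy inverse for $\Omega h\circ\Omega i$ and a homotopy commutative square in the shape of~(\ref{compatS1decomp}) with horizontal homotopy equivalences $\namedright{S^{2m-1}\times\Omega Q}{}{\Omega\overline{M}}$ and $\namedright{S^{2m-1}\times\Omega N}{}{\Omega M}$. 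Feeding the diagram~(\ref{Qdata}) (with $BS^{2m-1}$ in place of $BS^{1}$) into Theorem~\ref{BT} produces a homotopy cofibration $\nameddright{S^{n-1}\rtimes S^{2m-1}}{\theta}{Q}{b}{N}$; writing the source as $S^{n-1}\vee S^{n+2m-2}$ and attaching the two cells one at a time, the vanishing of $\widetilde{H}_{\ast}(N;\mathbb{Q})$ in degrees $n$ through $n+2m-2$ forces the intermediate complex to be $\overline{N}$, giving the analogue of Lemma~\ref{QNbarcofib}: a homotopy cofibration $\nameddright{S^{n-1}}{j}{Q}{\overline{b}}{\overline{N}}$ with $i'\circ\overline{b}\simeq b$, where $\namedright{\overline{N}}{i'}{N}$ is the skeletal inclusion and $j=\theta\circ i_{1}$ with $i_{1}$ the inclusion of the wedge summand $S^{n-1}$.

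The substantial step is the rational analogue of Proposition~\ref{QNbarsplit}: the cofibration just obtained splits, giving $Q\simeq S^{n-1}\vee\overline{N}$. Here $h$ represents a class $x\in H^{2m}(M;\mathbb{Q})$ which is nonzero (since $\Omega h$ has a section), and Poincar\'{e} Duality gives $y\in H^{n-2m}(M;\mathbb{Q})$ with $x\cup y=z$; as in the set-up preceding Proposition~\ref{Esplitting} one builds a homotopy co-action $\namedright{\overline{M}}{\psi}{\overline{M}\vee S^{n-2m}}$ with respect to a map $\namedright{\overline{M}}{\delta}{S^{n-2m}}$ whose $\delta^{\ast}$ has image $y$. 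Proposition~\ref{co-action} applied to $\nameddright{Q}{}{\overline{M}}{h\circ i}{BS^{2m-1}}$ then supplies a homotopy co-action $\namedright{Q}{\psi_{Q}}{Q\vee(S^{n-2m}\rtimes S^{2m-1})}$ with respect to a map $\delta_{Q}$. It remains to show that $\namedright{Q}{q\circ\delta_{Q}}{S^{n-1}}$, with $q$ the quotient onto $S^{n-2m}\wedge S^{2m-1}\simeq S^{n-1}$, is a left homotopy inverse for $j$. I would argue as in Step~3 of Proposition~\ref{QNbarsplit}: the Ganea fibration from~\cite{Ga} decomposes $\Omega(BS^{2m-1}\vee S^{n-2m})$, its Whitehead-product wedge summand is $\Sigma\Omega BS^{2m-1}\wedge\Omega S^{n-2m}\simeq S^{2m}\wedge\Omega S^{n-2m}\simeq S^{n-1}\vee R$ with $R$ being $(n-1)$-connected (because $n\geq 4m$), and the restriction of the Whitehead product to $S^{n-1}$ is the Whitehead product $S^{n-1}\to S^{2m}\vee S^{n-2m}$ (whose cofibre is $S^{2m}\times S^{n-2m}$) followed by the inclusion into $BS^{2m-1}\vee S^{n-2m}$. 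Computing $(h\vee 1)\circ\psi\circ f$ through this decomposition, the cofibre-diagram argument of the $BS^{1}$ case, which uses that $\psi$ realizes $y$, $h$ realizes $x$, and $x\cup y=z$ so that this Whitehead product detects the cup product, shows the Whitehead product must occur with coefficient $1$; comparing with the factorization $\gamma\circ\delta_{Q}\circ\theta\circ i_{1}$ then forces the $S^{n-1}$-component of $\delta_{Q}\circ\theta\circ i_{1}$ to be the identity, so $q\circ\delta_{Q}\circ j\simeq 1$. Finally $\psi_{Q}$ is used to ``add'' $\overline{b}$ and $q\circ\delta_{Q}$: the composite $\nameddright{Q}{\psi_{Q}}{Q\vee(S^{n-2m}\rtimes S^{2m-1})}{\overline{b}\vee q}{\overline{N}\vee S^{n-1}}$ has $p_{1}$-component $\overline{b}$ and $p_{2}$-component a left homotopy inverse for $j$, so it induces an isomorphism in rational homology and is a homotopy equivalence by Whitehead's theorem; in particular $\overline{b}$ acquires a right homotopy inverse $\namedright{\overline{N}}{t}{Q}$.

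With this splitting, the conclusion follows exactly as in the proof of Theorem~\ref{circleinert}. The hypothesis that the top cell of $N$ is inert gives a right homotopy inverse $\mathfrak{s}$ for $\Omega i'$; then $\Omega b\circ\Omega t\circ\mathfrak{s}\simeq\Omega i'\circ\Omega\overline{b}\circ\Omega t\circ\mathfrak{s}\simeq\Omega i'\circ\mathfrak{s}\simeq 1$, so $\Omega b$ has a right homotopy inverse, and transporting it through the homotopy commutative square modelled on~(\ref{compatS1decomp}) exhibits $\overline{e}\circ(1\times(\Omega t\circ\mathfrak{s}))\circ e^{-1}$ as a right homotopy inverse for $\Omega i$, so the top cell of $M$ is inert. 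I expect the main obstacle to be the left-homotopy-inverse step of the splitting; the extra care beyond the $BS^{1}$ argument lies in the Ganea bookkeeping with fibre $S^{2m-1}$ and in the edge case $n=4m$, where $\pi_{n-1}(S^{n-2m})\otimes\mathbb{Q}$ is nonzero (spanned by the Whitehead square on $S^{n-2m}$): that potential extra component of $\delta_{Q}\circ\theta\circ i_{1}$ detects $y^{2}$ rather than $x\cup y$ and is annihilated by $q$, so it affects neither the cup-product computation nor the identification of $q\circ\delta_{Q}$.
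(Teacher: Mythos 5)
Your proposal is correct and follows essentially the same route as the paper: it reruns Lemmas~\ref{NPD}, \ref{MMbarcompatsplit}, \ref{QNbarcofib} and Proposition~\ref{QNbarsplit} rationally with $BS^{1}$ replaced by $BS^{2m-1}$, with the only substantive new work in Step~3 of Proposition~\ref{QNbarsplit}, exactly as the paper does. Your treatment of the $n=4m$ edge case (the extra rational class in $\pi_{n-1}(S^{n-2m})$, the Whitehead square, being annihilated by $q$ and not interfering with the cup-product argument that forces $t=1$) is the same point the paper makes by splitting into the cases $4m=n$ and $4m<n$.
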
 

\begin{proof} 
The proof is similar to that for Theorem~\ref{circleinert}. Note that Lemmas~\ref{NPD}, 
\ref{MMbarcompatsplit}, and~\ref{QNbarcofib} all hold in the rational case with $BS^{1}$ simply  
replaced by $BS^{2m-1}$. Steps 1, 2 and 4 in the proof of Proposition~\ref{QNbarsplit} also 
hold with the same replacement. For Step 3 of Proposition~\ref{QNbarsplit}, 
Diagram~(\ref{Qid}) becomes 
\[\diagram 
     S^{n-1}\rto^-{i_{1}}\drrto_{f} & S^{n-1}\rtimes S^{2m-1}\rto^-{\theta} & Q\rto^-{\delta_{Q}}\dto^{a} 
          & S^{n-2m}\rtimes S^{2m-1}\dto \\ 
     & & \overline{M}\rto^-{(h\vee 1)\circ\psi} & BS^{2m-1}\vee  S^{n-2m}. 
  \enddiagram\] 
As $M$ is not a sphere, Poincar\'{e} Duality implies that the bottom cell of $M$ occurs in a degree 
no more than half the dimension of $M$. Therefore $4m\leq n$. 

There are two cases. First, if $4m=n$ then $M$ is a $(2m-1)$-connnected $4m$-dimensional 
Poincar\'{e} Duality complex and $n-2m=2m$. In this case the Hilton-Milnor Theorem implies that 
the composite $(h\vee 1)\circ\psi\circ f$ is homotopic to $s\cdot\omega+t\cdot w$ where $\omega$ 
generates $\pi_{4m-1}(S^{2m})\cong\mathbb{Q}$, $w$ is the composite 
\(\namedright{S^{4m-1}}{}{S^{2m}\vee S^{2m}}\hookrightarrow BS^{2m-1}\vee S^{2m}\) 
in which the left map is the Whitehead product whose cofibre is $S^{2m}\times S^{2m}$, 
and $s,t\in\mathbb{Q}$. The key to the remainder of the argument in the proof of Step 3 
in Proposition~\ref{QNbarsplit} is that $t=1$, and the same argument works in this case as well. 
Second, if $4m<n$ then the cell of least dimension in $M$ is in a degree strictly less than half the 
dimension of $M$. Thus $2m<n-2m$, in which case the composite $(h\vee 1)\circ\psi\circ f$ 
is homotopic to $t\cdot w$, and the argument as before in Step 3 of Proposition~\ref{QNbarsplit} 
again implies that $t=1$. In both cases we obtain $t=1$ which then allows for the argument 
that $q\circ\delta_{Q}$ is a left homotopy inverse for $\theta\circ i_{i}=j$. 

The proof now proceeds exactly as in that for Theorem~\ref{circleinert} with $BS^{1}$ 
replaced by $BS^{2m-1}$. 
\end{proof} 

\begin{theorem} 
   \label{rationalcase} 
   Let $M$ be a simply-connected closed Poincar\'{e} Duality complex of dimension $n$, 
   where $n\geq 4$. If the rational cohomology of $M$ is not generated by a single element then 
   the attaching map for the top cell of $M$ is inert. 
\end{theorem}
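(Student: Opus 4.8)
The plan is to work entirely rationally and argue by a recursion that peels off the lowest-degree generator of the rational cohomology one step at a time, until that degree becomes odd and Theorem~\ref{inert} applies, and then to propagate inertness back up the resulting tower using Theorem~\ref{EMinert}. Note first that $n\ge 4$ is automatic from the hypothesis, since the only simply-connected Poincar\'{e} Duality complexes of dimension $2$ or $3$ are $S^{2}$ and $S^{3}$, whose rational cohomology is monogenic.

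Suppose $M$ is rationally $(m-1)$-connected, so $H^{i}(M;\mathbb{Q})=0$ for $0<i<m$ and $H^{m}(M;\mathbb{Q})\neq 0$; Poincar\'{e} Duality gives $2m\le n$, and since $H^{*}(M;\mathbb{Q})$ is not monogenic $M$ is not a rational sphere, so $2\le m<n$. If $m$ is \emph{odd}, then $S^{m}\simeq_{\mathbb{Q}}K(\mathbb{Q},m)$, so a nonzero class $x\in H^{m}(M;\mathbb{Q})$ is represented by a map $M\to S^{m}$; the rational Hurewicz theorem produces a map $S^{m}\to M$ whose Hurewicz image is dual to $x$, and composing these shows it is a right homotopy inverse rationally. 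Theorem~\ref{inert} then gives that the attaching map for the top cell of $M$ is inert. If $m$ is \emph{even}, then $m-1$ is odd, so $S^{m-1}\simeq_{\mathbb{Q}}K(\mathbb{Q},m-1)$ has a rational classifying space $BS^{m-1}\simeq_{\mathbb{Q}}K(\mathbb{Q},m)$, and a nonzero $x\in H^{m}(M;\mathbb{Q})$ is represented by a map $h\colon M\to BS^{m-1}$. I would check, again via the rational Hurewicz theorem, that $\Omega h\colon\Omega M\to\Omega BS^{m-1}\simeq_{\mathbb{Q}}S^{m-1}$ has a right homotopy inverse: a map $S^{m}\to M$ dual to $x$ adjoints to a section of $\Omega h$. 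Letting $N$ be the homotopy fibre of $h$, the rational form of Lemma~\ref{NPD} (as used in the proof of Theorem~\ref{EMinert}) shows $N$ is a simply-connected closed rational Poincar\'{e} Duality complex, of dimension $n+m-1$. By Theorem~\ref{EMinert}, if the top cell of $N$ is inert then so is that of $M$, so it remains to establish the theorem for $N$.

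Since $\dim N>\dim M$, this is not an induction on dimension. Instead I would induct on the number of generators of the minimal Sullivan model of $M$ that lie in degrees below the least odd degree in which the model has a generator; this is a finite number, because each $\pi_{k}(M)\otimes\mathbb{Q}$ is finite-dimensional, and because $M$, having positive dimension, cannot have a minimal model concentrated in even degrees (else the differential would vanish, making $H^{*}(M;\mathbb{Q})$ infinite-dimensional). Passing from $M$ to the fibre $N$ deletes precisely the generator corresponding to $x$ (one adds a degree $m-1$ generator transgressing to $x$, and this acyclic pair splits off after a change of variables in higher degrees), so the count strictly decreases, and when it reaches zero the connectivity of the current complex is one less than an odd number and the previous case applies. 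The Gysin sequence of the principal fibration $S^{m-1}\to N\to M$ also records $\dim_{\mathbb{Q}}H^{m}(N;\mathbb{Q})=\dim_{\mathbb{Q}}H^{m}(M;\mathbb{Q})-1$ with $H^{i}(N;\mathbb{Q})=0$ for $i<m$, confirming that connectivity never drops along the recursion.

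The point requiring care — and the part I expect to be the main obstacle — is that $N$ must stay a \emph{non-monogenic} Poincar\'{e} Duality complex at every stage, so that Theorem~\ref{inert} and Theorem~\ref{EMinert} remain applicable; equivalently $N$ must never be rationally a sphere, a projective space, or $\mathbb{O}P^{2}$. Here I would run the rational Serre spectral sequence of $S^{m-1}\to N\to M$, whose Euler class is $x$: if $H^{*}(N;\mathbb{Q})$ were monogenic, then, since the cohomology of $BS^{m-1}\simeq_{\mathbb{Q}}K(\mathbb{Q},m)$ is a polynomial algebra on a single even-degree class, the only possibilities for the transgression would force $H^{*}(M;\mathbb{Q})$ to be either infinite-dimensional (contradicting Poincar\'{e} Duality) or itself monogenic (contradicting the hypothesis), so $N$ is again non-monogenic. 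Assembling this dichotomy with the generator count above, and making fully precise both the termination bookkeeping and the verification that $\Omega h$ genuinely has a right homotopy inverse, is where the real work of the argument lies.
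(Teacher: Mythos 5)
Your proposal is correct in outline and follows the same core strategy as the paper: split on the parity of the bottom degree $m$, handle $m$ odd by producing a rational map $M\to S^{m}$ with right homotopy inverse and quoting Theorem~\ref{inert}, and handle $m$ even by fibering over $BS^{m-1}\simeq_{\mathbb{Q}}K(\mathbb{Q},m)$, checking $\Omega h$ has a right homotopy inverse, invoking Quinn--Gottlieb to keep a Poincar\'{e} Duality complex, and iterating via Theorem~\ref{EMinert}. Where you genuinely diverge is in the bookkeeping that makes the iteration close up. The paper does not count anything: it tracks two explicit odd-degree classes $y_{1},y_{2}$ through the Serre spectral sequences of the tower, concludes that a rational sphere can only appear at the first stage $M_{1}$, and shows that this would force $H^{*}(M;\mathbb{Q})\cong\mathbb{Q}[x]/(x^{\ell+1})$, contradicting non-monogenicity. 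You instead (i) give an explicit termination argument, by counting rational homotopy (equivalently, minimal Sullivan model) generators in even degrees below the least odd-degree generator --- a count which is finite, strictly decreases at each step because the fibration $N\to M\to K(\mathbb{Q},m)$ kills exactly one class in $\pi_{m}\otimes\mathbb{Q}$ and leaves all odd rational homotopy untouched, and forces an odd bottom degree when it reaches zero --- and (ii) prove that non-monogenicity is preserved at every stage, via the spectral sequence of $N\to M\to K(\mathbb{Q},m)$: a monogenic fibre on an odd generator makes $H^{*}(M;\mathbb{Q})$ truncated polynomial on $x$, while an even generator (or a non-transgressing odd one) makes the $E_{2}$-page collapse and $H^{*}(M;\mathbb{Q})$ infinite-dimensional, both impossible. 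This buys a cleaner inductive structure: the invariant ``not monogenic'' is what guarantees the ``not a sphere'' hypothesis of Theorem~\ref{EMinert} and of the final application of Theorem~\ref{inert} at every stage, and your count makes the termination of the tower explicit, a point the paper leaves implicit. The trade-off is that you use minimal Sullivan models, which the paper deliberately avoids (its stated aim is to recover Halperin--Lemaire without Sullivan algebras); note, though, that your count is just $\sum_{k<q}\dim\pi_{k}(M)\otimes\mathbb{Q}$ with $q$ the least odd degree with nonzero rational homotopy, so it can be phrased model-free via the long exact sequence of the fibration, which would also spare you the fiddly verification that the ``acyclic pair'' splits off minimally. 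With that rephrasing and the details you flag filled in, your argument is a complete and valid alternative to the paper's endgame.
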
 

\begin{proof} 
Suppose that $M$ is $(m-1)$-connected. As $H^{\ast}(M;\mathbb{Q})$ is not generated by a single 
element, $M$ is not a sphere, so Poincar\'{e} Duality implies that $2m\leq n$. Let $x\in H^{m}(M;\mathbb{Q})$ 
be a generator. Then~$x$ is represented by a map 
\(r\colon\namedright{M}{}{K(\mathbb{Q},m)}\). 
Let $\bar{x}\in H^{m}(M;\mathbb{Q})$ be the dual of $x$. The Hurewicz isomorphism implies that 
there is a map 
\(s\colon\namedright{S^{m}}{}{M}\) 
with Hurewicz image $\bar{x}$. The duality between~$\bar{x}$ and $x$ implies that the composite  
\(\nameddright{S^{m}}{s}{M}{r}{K(\mathbb{Q},m)}\) 
is homotopic to the inclusion of the bottom cell. 
\medskip  

\noindent 
\textit{Case 1: $m$ is odd}. Then $K(\mathbb{Q},m)\simeq S^{m}$, so we may regard $r$ as a map 
\(\namedright{M}{r}{S^{m}}\).  
As $r\circ s$ has degree one, $r\circ s$ is homotopic to the identity map. Thus $r$ has a right 
homotopy inverse. The rational version of either Theorem~\ref{inert} or Theorem~\ref{Moddinert} 
then implies that the attaching map for the top cell of $M$ is inert. 
\medskip 

\noindent 
\textit{Case 2: $m$ is even}. Then we may write $K(\mathbb{Q},2m)$ as $BS^{2m-1}$ and 
regard $r$ as a map 
\(\namedright{M}{r}{BS^{2m-1}}\). 
After looping the composite 
\(\llnameddright{S^{2m-1}}{E}{\Omega S^{2m}}{\Omega (r\circ s)}{S^{2m-1}}\) 
is a homotopy equivalence, where~$E$ is the suspension. Thus $\Omega r$ has a right 
homotopy inverse. If $M_{1}$ is the homotopy fibre of $r$, then by~\cite{Go,Q} $M_{1}$ 
is a closed Poincar\'{e} Duality complex, so Theorem~\ref{EMinert} 
implies that if the attaching map for the top cell of $M_{1}$ is inert then the attaching 
map for the top cell of $M$ is inert. Therefore we are reduced to considering $M_{1}$. The 
argument may now be iterated for~$M_{1}$, giving a sequence of Poincar\'{e} Duality complexes 
$M_{k}\longrightarrow M_{k-1}\longrightarrow\cdots\longrightarrow M_{1}\longrightarrow M_{0}=M$, 
where, for $1\leq i\leq k$, $M_{i}$ is the homotopy fibre of a map 
\(\namedright{M_{i-1}}{r_{i-1}}{BS^{2m_{i-1}-1}}\)  
with $\Omega r_{i-1}$ having a right homotopy inverse, and the attaching map for the top cell 
of $M_{i-1}$ being inert provided the attaching map for the top cell of~$M_{i}$ is inert. The 
iteration stops if $M_{k}$ is $(m'-1)$-connected for $m'$ odd or if $M_{k}$ is 
a sphere. For if $M_{k}$ is $(m'-1)$-connected for $m'$ odd then $M_{k}$ satisfies Case 1, in 
which case the attaching map for the top cell of $M_{k}$ is inert and hence the attaching 
map for the top cell of $M$ is inert. Otherwise, $M_{k}$ is a sphere. 

To go further, return to $M_{1}$ and consider rational cohomology. The map $r$ represents 
a class $x\in H^{2m}(M;\mathbb{Q})$. Fibering $r$, there is an associated principal fibration 
\(\nameddright{S^{2m-1}}{}{M_{1}}{}{M}\). 
Choose a generator $\iota\in H^{2m-1}(S^{2m-1};\mathbb{Q})$ so that $\iota$ transgresses to $x$ 
in the cohomology Serre spectral sequence. Then $d^{2m}(\iota)=x$. Suppose that $x^{\ell}\neq 0$ 
but $x^{\ell+1}=0$ for some $\ell\geq  1$. As the differential is a derivation we obtain 
$d^{2m}(x^{\ell-1}\otimes\iota)=x^{\ell}\neq 0$ and $d^{2m}(x^{\ell}\otimes\iota)=x^{\ell+1}=0$. 
Thus $x^{\ell}\otimes\iota$ survives to the $E_{2m+1}$-page, and for degree reasons all other 
differentials are zero so $x^{\ell}\otimes\iota$ survives to the $E_{\infty}$-page. Thus $x^{\ell}\otimes\iota$ 
represents an odd degree element $y_{1}\in H^{\ast}(M_{1};\mathbb{Q})$. At the next step in the iteration, 
the map 
\(\namedright{M_{1}}{r_{1}}{BS^{2m_{1}-1}}\) 
represents a class $x_{1}\in H^{2m_{1}}(M_{1};\mathbb{Q})$, and in the cohomology Serre 
spectral sequence for the associated principal fibration 
\(\nameddright{S^{2m_{1}-1}}{}{M_{2}}{}{M_{1}}\), 
a generator $\iota_{1}\in H^{2m_{1}-1}(S^{2m_{1}-1};\mathbb{Q})$ may be chosen to transgress 
to $x_{1}$. The image of $d^{2m_{1}}$ lies in even degrees in $H^{\ast}(M_{1};\mathbb{Q})$, 
so as $y_{1}$ is of odd degree it cannot be in the image of $d^{2m_{1}}$. All other differentials 
are zero for degree reasons, so $y_{1}$ must survive the spectral sequence, and therefore it 
represents an element in $H^{\ast}(M_{2};\mathbb{Q})$. The same argument applied iteratively 
implies that $y_{1}$ represents an element in $H^{\ast}(M_{k};\mathbb{Q})$. Further, the same 
argument applied to $x_{1}\in H^{2m_{1}}(M_{1};\mathbb{Q})$ implies that there is an odd degree 
element $y_{2}\in H^{\ast}(M_{2};\mathbb{Q})$ that also represents an element in 
$H^{\ast}(M_{k};\mathbb{Q})$. Thus if $k\geq 2$ then $H^{\ast}(M_{k};\mathbb{Q})$ has two 
distinct odd degree elements so $M_{k}$ cannot be a sphere, in which case we must be in Case 1, 
implying - as above - that the attaching map for the top cell of $M$ is inert. 
The one remaining option is when $k=1$, which implies that $M_{1}$ is a sphere, and as 
$y\in H^{\ast}(M_{1};\mathbb{Q})$ is of degree $2m(\ell+1)-1$, we must have 
$M_{1}\simeq S^{2m(\ell+1)-1}$. But then the spectral sequence argument that produced $y_{1}$ 
now implies that $H^{\ast}(M;\mathbb{Q})\cong\mathbb{Q}[x]/(x^{\ell+1})$. By hypothesis, the 
rational cohomology of~$M$ is not generated by a single element, so this case cannot occur. 
\end{proof}

\section{Two applications} 
\label{sec:apps} 

We consider two applications of the inertness property. The first is to identify the homotopy fibre 
of the inclusion 
\(\namedright{\overline{M}}{i}{M}\), 
and the second is to generate more examples via connected sums and determine the 
homotopy type of their loop spaces. 

\subsection{The homotopy fibre of the inclusion \(\namedright{\overline{M}}{i}{M}\)} 
The inertness property of the attaching map $f$ of the top cell for $M$ says that the map 
\(\namedright{\Omega\overline{M}}{\Omega i}{\Omega M}\)  
has a right homotopy inverse. The complementary factor can be identified; more 
strongly, the homotopy fibre of $i$ can be identified. 

Given maps 
\(f\colon\namedright{\Sigma A}{}{X}\) 
and 
\(g\colon\namedright{\Sigma B}{}{X}\) 
their \emph{Whitehead product} is denoted by  
\[[f,g]\colon\namedright{\Sigma A\wedge B}{}{X}.\] 
Let 
\(ev\colon\namedright{\Sigma\Omega X}{}{X}\)  
be the canonical evaluation map. The following was proved in~\cite{BT2}. 

\begin{lemma} 
   \label{inertfibre} 
   Suppose there is a homotopy cofibration 
   \(\nameddright{\Sigma A}{f}{X}{i}{Y}\)  
   and $\Omega i$ has a right homotopy inverse 
   \(s\colon\namedright{\Omega Y}{}{\Omega X}\). 
   Then there is a  homotopy fibration 
   \[\llnameddright{\Sigma A\vee(\Sigma A\wedge\Omega Y)}{f\perp [f,\gamma]}{X}{i}{Y}\] 
   where $\gamma$ is the composite 
   \(\nameddright{\Sigma\Omega Y}{\Sigma s}{\Sigma\Omega X}{ev}{X}\). 
   Further, this fibration splits after looping to give a homotopy equivalence 
   \[\Omega X\simeq\Omega Y\times\Omega(\Sigma A\vee(\Sigma A\wedge\Omega Y)).\] 
\end{lemma}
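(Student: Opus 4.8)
The plan is to realize the claimed homotopy fibration as an instance of the general framework developed in Section~\ref{sec:BT}, so that Theorem~\ref{BT} and the splitting machinery do the work. The starting point is the homotopy cofibration \(\nameddright{\Sigma A}{f}{X}{i}{Y}\) together with its canonical homotopy co-action \(\psi\colon\namedright{Y}{}{Y\vee\Sigma A}\) with respect to the connecting map \(\namedright{Y}{\delta}{\Sigma(\Sigma A)}\), which in fact lets us view the cofibration in reverse: \(\nameddright{\Sigma A}{f}{X}{i}{Y}\) says that collapsing \(\Sigma A\) in \(X\) yields \(Y\), and the co-action expresses \(Y\) as built from \(X\) by the pinch. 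I would take \(Z = Y\) and \(h = i\) in the data~(\ref{data}): then \(E\), the homotopy fibre of \(i\), is the object we want to identify, and \(E'\), the homotopy fibre of the identity on \(Y\), is contractible. Theorem~\ref{BT} applied here says that, since \(\Omega i\) has a right homotopy inverse \(s\) by hypothesis, there is a homotopy cofibration \(\nameddright{\Sigma A\rtimes\Omega Y}{\theta}{E}{}{E'}\), and since \(E'\simeq\ast\) this already gives a homotopy equivalence \(E\simeq\Sigma A\rtimes\Omega Y\). Because \(\Sigma A\) is a suspension (hence a co-\(H\)-space), the standard identification \(\Sigma A\rtimes\Omega Y\simeq\Sigma A\vee(\Sigma A\wedge\Omega Y)\) recalled after diagram~(\ref{data}) gives \(E\simeq\Sigma A\vee(\Sigma A\wedge\Omega Y)\).

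The next task is to identify the composite \(\namedright{\Sigma A\vee(\Sigma A\wedge\Omega Y)}{}{X}{i}{Y}\) — equivalently, the map \(E\to X\) — as \(f\perp[f,\gamma]\). On the first wedge summand the restriction of \(\theta\) is, by the last clause of Theorem~\ref{BT}, a lift of \(f\) through \(\namedright{E}{}{X}\); since here \(E\to X\) composed with this lift recovers \(f\) on \(\Sigma A\). For the second summand, the relevant principle is that the half-smash factor \(\Sigma A\wedge\Omega Y\) maps into \(X\) via the composite of the projection \(\Sigma A\wedge\Omega Y\to\Sigma A\wedge\Omega Y\), the action map, and the evaluation; concretely, the map \(E\to X\) restricted to \(\Sigma A\wedge\Omega Y\) is the Samelson/Whitehead-type product of \(f\) with \(\gamma = ev\circ\Sigma s\colon\namedright{\Sigma\Omega Y}{}{X}\). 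The cleanest way to see this is to recall that \(\Sigma A\rtimes\Omega Y\) sits in the homotopy fibration \(\nameddright{\Sigma A\rtimes\Omega Y}{}{(\Sigma A)\vee Y}{p_{1}}{Y}\) of Lemma~\ref{p1fib} (with the roles \(X\leftrightarrow\Sigma A\), \(Y\leftrightarrow Y\) there), and that \(X\), being the cofibre of \(f\colon\namedright{\Sigma A}{}{\ast}\)-ish data, receives \((\Sigma A)\vee Y\to X\) along which the fibre inclusion becomes \(f\perp[f,\gamma]\); this is exactly the content of the cited result in~\cite{BT2}, so I would invoke it rather than rederive the Whitehead-product formula from scratch.

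Finally, for the looping statement: the homotopy fibration \(\llnameddright{\Sigma A\vee(\Sigma A\wedge\Omega Y)}{f\perp[f,\gamma]}{X}{i}{Y}\) has, by hypothesis, the property that \(\Omega i\) admits a right homotopy inverse \(s\). A homotopy fibration \(\nameddright{F}{}{X}{}{Y}\) whose looped projection has a section splits after looping — one uses the section \(\Omega s\) together with the loop multiplication on \(\Omega X\) to build a map \(\Omega Y\times\Omega F\to\Omega X\) that is a homotopy equivalence (this is the same multiplicative argument used in the proof of Theorem~\ref{inert}). Applying this to \(F = \Sigma A\vee(\Sigma A\wedge\Omega Y)\) gives the asserted equivalence \(\Omega X\simeq\Omega Y\times\Omega(\Sigma A\vee(\Sigma A\wedge\Omega Y))\).

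The main obstacle is the precise identification of the map \(E\to X\) as \(f\perp[f,\gamma]\): tracking the attaching data through Theorem~\ref{BT} and the half-smash decomposition to pin down the Whitehead-product term is the delicate point, and it is precisely where I would lean on the cited statement from~\cite{BT2} rather than attempt an independent diagram chase. Everything else — the contractibility of \(E'\), the co-\(H\) splitting of the half-smash, and the loop-space splitting — is routine.
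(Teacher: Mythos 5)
The first thing to note is that the paper does not prove this lemma at all: it is quoted from \cite{BT2} (``The following was proved in~\cite{BT2}''), so there is no internal argument to compare with. Judged on its own terms, your reconstruction gets the outer steps right, and indeed follows the spirit of how the result is obtained. Taking $Z=Y$, $h=i$ and $h'=\mathrm{id}_{Y}$ in the data~(\ref{data}) makes $E'$ contractible, Theorem~\ref{BT} gives a homotopy cofibration \(\nameddright{\Sigma A\rtimes\Omega Y}{\theta}{E}{}{E'}\simeq\ast\), and since $\pi_{\ast}(i)$ is surjective (because $\Omega i$ has a right homotopy inverse) the fibre $E$ is simply connected, so the homology Whitehead theorem upgrades $\theta$ to an equivalence $E\simeq\Sigma A\rtimes\Omega Y\simeq\Sigma A\vee(\Sigma A\wedge\Omega Y)$; you should say this explicitly, since ``contractible cofibre implies equivalence'' is exactly where the section of $\Omega i$ and simple connectivity are used. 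The final looped splitting, using the section $s$ and the loop multiplication, is the same standard argument as in the proof of Theorem~\ref{inert} and is fine.

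The genuine gap is at the heart of the lemma: identifying the fibre inclusion \(\namedright{E}{}{X}\), under the equivalence above, as $f\perp[f,\gamma]$. Your justification there is circular --- ``this is exactly the content of the cited result in~\cite{BT2}'' is precisely the statement you are being asked to prove --- and the sketch via Lemma~\ref{p1fib} does not work as described: there is no map \(\namedright{\Sigma A\vee Y}{}{X}\) along which to compare the fibration \(\nameddright{\Sigma A\rtimes\Omega Y}{}{\Sigma A\vee Y}{p_{1}}{Y}\) with \(\nameddright{E}{}{X}{i}{Y}\), because $Y$ is the cofibre of $f$ and does not map to $X$ (an unlooped section of $i$ is not hypothesized and in general does not exist). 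What is actually required is: (i) observe that $i\circ(f\perp[f,\gamma])$ is null homotopic, since $i\circ f\simeq\ast$ and $i\circ[f,\gamma]\simeq[i\circ f,i\circ\gamma]\simeq\ast$ by naturality of Whitehead products, so $f\perp[f,\gamma]$ lifts to $E$; and (ii) show that this particular lift is a homotopy equivalence, which amounts to relating the Whitehead product term $[f,\gamma]$ to the holonomy action of $\Omega Y$ on $E$ (equivalently to the map $\theta$ of Theorem~\ref{BT}) and checking the lift induces an isomorphism in homology, $H_{\ast}(E)$ being a free $H_{\ast}(\Omega Y)$-module generated by the image of $H_{\ast}(\Sigma A)$. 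Step~(ii) is the substantive content of the proof in \cite{BT2}, and it is exactly the piece missing from your proposal.
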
 
\vspace{-1cm}~$\qqed$\bigskip 

In our case, for the Poincar\'{e} Duality complexes in Theorem~\ref{inert}, there is 
a homotopy cofibration 
\(\nameddright{S^{n-1}}{f}{\overline{M}}{}{M}\) 
that attaches the top cell and $\Omega i$ has a right homotopy inverse. Lemma~\ref{inertfibre} 
then immediately implies the following. 

\begin{proposition} 
   \label{Mfibre} 
   Let $M$ be a simply-connnected closed Poincar\'{e} Duality complex satisfying 
   the hypotheses of Theorem~\ref{inert}. Then there is a homotopy fibration 
   \[\llnameddright{S^{n-1}\vee(S^{n-1}\wedge\Omega M)}{f\perp[f,\gamma]}{\overline{M}}{i}{M}\] 
   that splits after looping to give a homotopy equivalence 
   $\Omega\overline{M}\simeq\Omega M\times\Omega(S^{n-1}\vee\Sigma^{n-1}\Omega M)$. 
   
   If instead~$M$ satisfies the hypotheses of Theorem~\ref{Moddinert} then this homotopy  
   fibration and homotopy equivalence for $\Omega\overline{M}$ hold after localization 
   away from $\mathcal{P}$.~$\qqed$ 
\end{proposition}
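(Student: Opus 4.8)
The plan is to read off Proposition~\ref{Mfibre} directly from Lemma~\ref{inertfibre}, the only work being to verify that its hypotheses are met. First I would recall that, attached to $M$, there is the homotopy cofibration
\(\nameddright{S^{n-1}}{f}{\overline{M}}{i}{M}\)
giving the top cell, and that this has exactly the shape
\(\nameddright{\Sigma A}{f}{X}{i}{Y}\)
demanded by Lemma~\ref{inertfibre}, with $\Sigma A = S^{n-1}$ (so $A = S^{n-2}$), $X = \overline{M}$ and $Y = M$.

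Next I would invoke Theorem~\ref{inert}: under its hypotheses the attaching map $f$ is inert, which is precisely the statement that
\(\namedright{\Omega\overline{M}}{\Omega i}{\Omega M}\)
admits a right homotopy inverse
\(s\colon\namedright{\Omega M}{}{\Omega\overline{M}}\).
This supplies the remaining hypothesis of Lemma~\ref{inertfibre}, so the lemma applies and yields the homotopy fibration
\(\llnameddright{S^{n-1}\vee(S^{n-1}\wedge\Omega M)}{f\perp[f,\gamma]}{\overline{M}}{i}{M}\),
where $\gamma$ is the composite
\(\nameddright{\Sigma\Omega M}{\Sigma s}{\Sigma\Omega\overline{M}}{ev}{\overline{M}}\),
together with the looped splitting
$\Omega\overline{M}\simeq\Omega M\times\Omega(S^{n-1}\vee(S^{n-1}\wedge\Omega M))$. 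Rewriting $S^{n-1}\wedge\Omega M$ as $\Sigma^{n-1}\Omega M$ gives the form stated in the proposition.

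For the localized assertion I would run the identical argument, replacing the appeal to Theorem~\ref{inert} by one to Theorem~\ref{Moddinert}: when $M$ is $(m-1)$-connected with $m$ odd and $H_{m}(M)$ has a $\mathbb{Z}$-summand, that theorem provides, after localization away from $\mathcal{P}$, a right homotopy inverse for $\Omega i$. Since the conclusion of Lemma~\ref{inertfibre} depends only on having the homotopy cofibration together with such a right homotopy inverse, it holds verbatim in the category of spaces localized away from $\mathcal{P}$, giving the $\mathcal{P}$-local homotopy fibration and the corresponding equivalence for $\Omega\overline{M}$. I do not expect any real obstacle: the proposition is a repackaging of Lemma~\ref{inertfibre}, and the only points needing a word are that the attached cell is a suspension (so the lemma applies on the nose) and that the lemma is formal enough to localize.
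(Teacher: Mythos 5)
Your proposal is correct and matches the paper's own argument: the paper likewise deduces Proposition~\ref{Mfibre} immediately from Lemma~\ref{inertfibre}, using the top-cell cofibration and the right homotopy inverse for $\Omega i$ supplied by Theorem~\ref{inert} (respectively Theorem~\ref{Moddinert} after localizing away from $\mathcal{P}$). No gaps to note.
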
 

For example, rationally, any suspension is homotopy equivalent to a wedge of spheres. 
Thus the homotopy fibre of $i$ in Proposition~\ref{Mfibre} is rationally a wedge of spheres. 
More generally, if $\Omega M$ was known to be homotopy equivalent to a product of 
spheres and loops on spheres, either in the integral or local case, then the homotopy fibre 
of $i$ in Proposition~\ref{Mfibre} would correspondingly be homotopy equivalent to a wedge 
of spheres. An explicit example is the following. 

\begin{example}  
\label{quasitoric2} 
\textit{An elaboration on certain quasi-toric manifolds}. 
By Example~\ref{quasitoric}, the attaching map for the top cell of a quasi-toric manifold $M^{2n}$ 
is inert. As noted in that example, there is a homotopy fibration 
\(\nameddright{\mathcal{Z}_{K}}{}{M^{2n}}{h}{BT^{m-n}}\) 
where $\Omega h$ has a right homotopy inverse and $\mathcal{Z}_{K}$ is a moment-angle manifold. 
Suppose that the associated polytope $P=M^{2n}/T^{n}$ is a product of simplices, 
$P=\prod_{j=1}^{\ell}\Delta^{m_{\ell}}$. Then, by 
combining~\cite[Example 2.2.9 (4), Proposition 4.1.3 and Example 4.1.2 (4)]{BP}, there are homeomorphisms  
\[\mathcal{Z}_{K}\cong\prod_{j=1}^{\ell}\mathcal{Z}_{\partial\Delta^{m_{j}}}\cong\prod_{j=1}^{\ell} S^{2m_{j}+1}.\] 
Using the right homotopy inverse for $\Omega h$ we therefore obtain homotopy equivalences 
\[\Omega M^{2n}\simeq\big(\prod_{i=1}^{m-n} S^{1}\big)\times\Omega\mathcal{Z}_{K}\simeq 
      \big(\prod_{i=1}^{m-n} S^{1}\big)\times\big(\prod_{j=1}^{\ell}\Omega S^{2m_{j}+1}\big).\] 
Consequently, $\Sigma^{2n-1}\Omega M^{2n}$ is homotopy equivalent to a wedge $W$ of spheres. 
Therefore, by Proposition~\ref{Mfibre} there is a homotopy fibration 
\[\nameddright{S^{2n-1}\vee W}{}{\overline{M^{2n}}}{i}{M^{2n}}\] 
that splits after looping to give a homotopy equivalence 
$\Omega\overline{M^{2n}}\simeq\Omega M^{2n}\times\Omega(S^{2n-1}\vee W)$. Note that 
all this happens without any localization. 
\end{example} 
\medskip

\subsection{Connected sums} 
Theorems~\ref{inert},~\ref{Moddinert} and~\ref{Meveninert} give many examples 
of Poincar\'{e} Duality complexes with the property that the attaching map for the top cell is inert. 
These can be used to generate more examples due to the following result~\cite[Theorem 9.1]{T}. Note 
that if $M$ and $N$ are simply-connected closed $n$-dimensional Poincar\'{e} 
Duality complexes with $(n-1)$-skeletons $\overline{M}$ and $\overline{N}$ respectively, 
then the $(n-1)$-skeleton of $M\conn N$ is $\overline{M}\vee\overline{N}$. 

\begin{theorem} 
   \label{inertconnsum} 
   Let $M$ and $N$ be simply-connected closed $n$-dimensional Poincar\'{e} Duality complexes, 
   where $n\geq 2$. Let $\overline{M}$ and $\overline{N}$ be the $(n-1)$-skeletons of $M$ 
   and $N$ respectively. If the attaching map~$f$ for the top cell of~$M$ is inert then the following hold: 
   \begin{letterlist} 
      \item there is a homotopy equivalence 
              $\Omega(M\conn N)\simeq\Omega M\times\Omega(\Omega M\ltimes\overline{N})$; 
      \item the attaching map for the top cell of $M\conn N$ is inert: the loop map 
               \(\namedright{\Omega(\overline{M}\vee\overline{N})}{}{\Omega(M\conn N)}\) 
               has a right homotopy inverse. 
    \end{letterlist} 
\end{theorem}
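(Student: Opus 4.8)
The plan is to derive both statements from the machinery of Sections~\ref{sec:BT}--\ref{sec:splitting}; this result also appears as~\cite[Theorem~9.1]{T}, and it is natural to re-derive it here. Let $j_{M}\colon\namedright{\overline{M}}{}{M}$ and $j_{N}\colon\namedright{\overline{N}}{}{N}$ be the skeletal inclusions and let $f$ and $f_{N}$ be the attaching maps for the top cells of $M$ and $N$. Since the $(n-1)$-skeleton of $M\conn N$ is $\overline{M}\vee\overline{N}$, there is a homotopy cofibration $\nameddright{S^{n-1}}{g}{\overline{M}\vee\overline{N}}{i}{M\conn N}$ whose attaching map satisfies $p_{1}\circ g\simeq f$ and $p_{2}\circ g\simeq f_{N}$, and the collapse map $c_{M}\colon\namedright{M\conn N}{}{M}$ obtained by coning off the summand coming from $N$ restricts on $\overline{M}\vee\overline{N}$ to $j_{M}\circ p_{1}$. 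Set $Z=M$, $h=j_{M}\circ p_{1}$ and $h'=c_{M}$. Then $h\circ g\simeq j_{M}\circ f$ is null homotopic and $h'$ extends $h$ across $i$, so this is a diagram of data as in~(\ref{data}). The hypothesis that $f$ is inert says exactly that $\Omega j_{M}$ has a right homotopy inverse, and composing this with the loop of the inclusion of the first wedge summand (a section of $\Omega p_{1}$) shows $\Omega h$ has a right homotopy inverse; since $h'\circ i=h$, it follows formally that $\Omega h'=\Omega c_{M}$ has one as well.

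Next I would pin down the homotopy fibre $E$ of $h$. Applying Lemma~\ref{fibwedge} to the homotopy fibration $\nameddright{F}{}{\overline{M}}{j_{M}}{M}$, whose looped map has a right homotopy inverse, with the extra space taken to be $\overline{N}$, gives a homotopy equivalence $E\simeq F\vee(\overline{N}\rtimes\Omega M)$, where $F$ is the homotopy fibre of $j_{M}$. Applying Lemma~\ref{inertfibre} to the homotopy cofibration $\nameddright{S^{n-1}}{f}{\overline{M}}{j_{M}}{M}$ identifies $F\simeq S^{n-1}\vee(S^{n-1}\wedge\Omega M)\simeq S^{n-1}\rtimes\Omega M$. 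Hence
\[E\simeq(S^{n-1}\rtimes\Omega M)\vee(\overline{N}\rtimes\Omega M).\]
Since $\Omega h$ has a right homotopy inverse, Theorem~\ref{BT} gives a homotopy cofibration $\nameddright{S^{n-1}\rtimes\Omega M}{\theta}{E}{}{E'}$ in which $E'$ is the homotopy fibre of $c_{M}$ and the restriction of $\theta$ to $S^{n-1}$ is a lift of $g$.

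The key point, and what I expect to be the main obstacle, is to show that this cofibration splits, so that $E'\simeq\overline{N}\rtimes\Omega M$ (the same space as $\Omega M\ltimes\overline{N}$). Theorem~\ref{splittingprinciple} cannot be applied off the shelf, since $Z=M$ is not a suspension and hypothesis~(b) of that theorem has no counterpart here; instead one reproduces its co-action argument directly. Concretely: equip $\overline{M}\vee\overline{N}$ with a homotopy co-action with respect to a map into a wedge of $(n-1)$-spheres coming from the cell structure of the $\overline{N}$-summand, transport it along $h$ to a homotopy co-action on $E$ by Proposition~\ref{co-action}, and then, as in Step~$4$ of the proof of Theorem~\ref{splittingprinciple}, produce a left homotopy inverse for $\theta$ and use the co-action to ``add'' it to the map $\namedright{E}{}{E'}$, thereby splitting the summand $S^{n-1}\rtimes\Omega M$ off $E$. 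The left homotopy inverse for $\theta$ amounts to a homology calculation --- that post-composing $\theta$ with the co-action map on $E$ and projecting to the relevant wedge summand is a homology isomorphism --- which is where the inertness of $f$, together with $p_{1}\circ g\simeq f$ and the identification of $E$ above, gets used. An alternative would be to compute the homotopy fibre of $c_{M}$ directly by factoring $c_{M}$ through the pinch map $\namedright{M\conn N}{}{M\vee N}$ and invoking Lemma~\ref{p1fib}.

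Granting the splitting, I would assemble the conclusions. For~(a): the homotopy fibration $\nameddright{E'}{}{M\conn N}{c_{M}}{M}$ splits after looping because $\Omega c_{M}$ has a right homotopy inverse, so $\Omega(M\conn N)\simeq\Omega M\times\Omega E'\simeq\Omega M\times\Omega(\Omega M\ltimes\overline{N})$. For~(b): run the argument from the proof of Theorem~\ref{inert} with $c_{M}$ in the role of the map to $S^{m}$. A right homotopy inverse of $\Omega c_{M}$ is given by $\Omega i$ composed with a section of $\Omega h$, so it factors through $\Omega i$; and the splitting above provides a right homotopy inverse $\namedright{E'}{}{E}$ of $\namedright{E}{}{E'}$, whence, by the homotopy fibration diagram of~(\ref{data}) for $M\conn N$, the fibre inclusion $\namedright{\Omega E'}{}{\Omega(M\conn N)}$ also factors through $\Omega i$. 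Since $\Omega i$ is an $H$-map, the homotopy equivalence $\namedright{\Omega M\times\Omega E'}{}{\Omega(M\conn N)}$ from~(a) then lifts through $\Omega i$, so $\Omega i$ has a right homotopy inverse; that is, the attaching map for the top cell of $M\conn N$ is inert.
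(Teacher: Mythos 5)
You are not being compared against an in-paper argument here: the paper does not prove Theorem~\ref{inertconnsum}, it quotes it from \cite[Theorem 9.1]{T}. Judged on its own terms, your setup is correct as far as it goes: the data diagram with $X=\overline{M}\vee\overline{N}$, $Z=M$, $h=j_{M}\circ p_{1}$, $h'=c_{M}$ is legitimate, the right homotopy inverses for $\Omega h$ and $\Omega c_{M}$ do follow from inertness of $f$, Lemma~\ref{fibwedge} combined with Lemma~\ref{inertfibre} identifies $E\simeq(S^{n-1}\rtimes\Omega M)\vee(\overline{N}\rtimes\Omega M)$, and your deductions of (a) and (b) from the splitting $E'\simeq\overline{N}\rtimes\Omega M$ are sound. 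The genuine gap is the splitting itself, which you correctly flag as the main obstacle but do not prove, and whose sketch does not work. The homotopy co-action you propose, with respect to a map from $\overline{M}\vee\overline{N}$ to a wedge of $(n-1)$-spheres coming from the cells of the $\overline{N}$-summand, does not exist in general: for a simply-connected closed Poincar\'{e} Duality complex $N$ one has $H^{n-1}(N)\cong H_{1}(N)=0$ and $H_{n-1}(N)\cong H^{1}(N)=0$, so up to homotopy $\overline{N}$ is $(n-2)$-dimensional and there is no degree-$(n-1)$ class for such a $\delta$ to detect. More fundamentally, the engine of Theorem~\ref{splittingprinciple} is the structural package (base a suspension $\Sigma Z$, attaching sphere of the form $\Sigma Z\wedge Y$, and the auxiliary fibration \(\nameddright{\Sigma Y}{}{D}{\mathfrak{h}}{\Sigma Z}\) verified in Proposition~\ref{Esplitting} by a cup-product calculation), none of which has a counterpart when the base is $M$ and $A=S^{n-1}$; so ``reproducing the co-action argument'' is not a routine adaptation, and the asserted homology computation of a left inverse for $\theta$ is precisely the missing content --- Theorem~\ref{BT} only tells you that $\theta$ restricted to $S^{n-1}$ lifts $g$, which determines neither $H_{\ast}(\theta)$ on the $S^{n-1}\wedge\Omega M$ part nor its relation to the wedge decomposition of $E$. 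Your parenthetical alternative also does not close the gap: Lemma~\ref{p1fib} gives $N\rtimes\Omega M$, not $\overline{N}\rtimes\Omega M$, as the fibre of the pinch map, and comparing that fibre with the fibre of $c_{M}$ is again the whole issue.

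There is a much shorter route that avoids the splitting entirely and uses only Theorem~\ref{BT}. Apply it to the homotopy cofibration \(\nameddright{\overline{N}}{}{M\conn N}{c_{M}}{M}\) (inclusion of the $\overline{N}$ wedge summand of the skeleton, whose cofibre is $M$ since $p_{1}\circ g\simeq f$), taking the homotopy fibration to be $c_{M}$ itself: so $A=\overline{N}$, $X=M\conn N$, $X'=Z=M$, $h=c_{M}$ and $h'=\mathrm{id}_{M}$. The composite $\overline{N}\rightarrow M\conn N\rightarrow M$ is null homotopic, $\Omega c_{M}$ has a right homotopy inverse by inertness of $f$, and the fibre of $\mathrm{id}_{M}$ is contractible, so the resulting homotopy cofibration exhibits \(\namedright{\overline{N}\rtimes\Omega M}{\theta}{\mathrm{fib}(c_{M})}\) as a homology isomorphism between simply-connected spaces, hence a homotopy equivalence; looping the fibration \(\nameddright{\mathrm{fib}(c_{M})}{}{M\conn N}{c_{M}}{M}\) and using the section gives (a) immediately. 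For (b) you still owe an argument that the equivalence $\Omega M\times\Omega\,\mathrm{fib}(c_{M})\simeq\Omega(M\conn N)$ lifts through $\Omega i$: the $\Omega M$ factor lifts as you say, but your lift of the other factor uses a right homotopy inverse \(\namedright{E'}{}{E}\), which rests on the same unproven splitting; this step needs to be supplied, for example by showing that the composite of the wedge-summand inclusion $\overline{N}\rtimes\Omega M\rightarrow E$ with $E\rightarrow E'$ is an equivalence (comparing the two applications of the fibre machinery), or by following the argument of \cite[Theorem 9.1]{T}.
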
 

Theorem~\ref{inertconnsum}~(b) says that if the attaching map for the top cell of $M$ is inert 
then so is the attaching map for the top cell of $M\conn N$ for any $N$, regardless of the behaviour 
of the attaching map for the top cell of $N$. This lets us generate new examples freely. For instance, 
if $M^{2n}$ is a quasi-toric manifold of dimension $2n$ then, by Example~\ref{quasitoric}, the attaching 
map for its top cell is inert. So if~$N$ is any $2n$-dimensional, simply-connected closed Poincar\'{e} 
Duality complex then the attaching map for the top cell of $M^{2n}\conn N$ is inert. 

Further, Theorem~\ref{inertconnsum}~(a) gives a homotopy decomposition for $\Omega(M\conn N)$ 
in terms of $\Omega M$ and~$\overline{N}$. For instance, continuing Example~\ref{quasitoric2}, 
for any quasi-toric manifold $M^{2n}$ associated to a simply polytope that is a product of simplices and 
for any $N$ as in the previous paragraph, there is a homotopy equivalence 
$\Omega(M^{2n}\conn N)\simeq\Omega M^{2n}\times\Omega(\Omega M^{2n}\ltimes\overline{N})$, 
which may be refined by substituting in the homotopy equivalence 
$\Omega M^{2n}\simeq\prod_{i=1}^{m-n} S^{1}\times\prod_{j=1}^{\ell}\Omega S^{2m_{j}+1}$  
from Example~\ref{quasitoric2}. 

Theorem~\ref{inertconnsum} also works equally well in a local setting, in which case the 
connected sum is given a topological interpretation as the sum of the attaching maps for 
the top cells.

\bibliographystyle{amsalpha}

\end{document}